\newtheorem{theorem}{Theorem}
\newtheorem{lemma}[theorem]{Lemma}
\newtheorem{remark}[theorem]{Remark}
\newtheorem{corollary}[theorem]{Corollary}
\newtheorem{definition}[theorem]{Definition}
\numberwithin{equation}{section}
\numberwithin{theorem}{section}
\providecommand{\keywords}[1]
{
  \small	
  \textbf{Keywords } #1
}
\providecommand{\thank}[1]
{
  \small	
  \textbf{Acknowledgment } #1
}
\title{\textbf{Explicit formula for the Benjamin–Ono equation with square integrable and real valued initial data and applications to the zero dispersion limit }}
\date{}
\author{Xi Chen}
\begin{document}

\maketitle

\begin{abstract}
In this paper, we extend the Gérard’s formula for the solution of the Benjamin–Ono equation on the line to square integrable and real valued initial data. Combined with this formula, we also extend the Gérard’s formula for the zero dispersion limit of the Benjamin–Ono equation on the line to more singular initial data. In the derivation of the extension of the formula for the zero dispersion limit, we also find an interesting integral equality, which might be useful in other contexts.
\end{abstract}
\keywords{Benjamin–Ono equation, Explicit formula, Zero dispersion limit.}\\\\
\thank{The author is currently a PhD student at Institut de Mathématique d’Orsay of Université Paris-Saclay, and he would like to thank his PhD advisor Patrick Gérard for his supervision of this paper. The author would also like to thank Shouda Wang for the discussion on the proof of (\ref{3.07}).}
\tableofcontents{}
\section{Introduction}
\subsection{The Benjamin–Ono equation}
The Benjamin–Ono equation is a nonlinear partial integro-differential equation which describes one-dimensional internal waves in deep water. It was introduced by Benjamin in \cite{2}(see also Davis–Acrivos \cite{3}, Ono \cite{4}). On the line, it reads 
\begin{equation}
\label{0.1}
\begin{aligned}
& \partial_{t} u =\partial_{x}\left(|D| u-u^{2}\right), \quad (t,x) \in \mathbb{R}\times\mathbb{R}, \\  & u(0,x) = u_0.
\end{aligned}
\end{equation}
Here $u = u(t, x)$ denotes a real valued function. We refer to the book by Klein and Saut \cite{10} for a recent survey of this equation. In this paper, we denote by $H_r^s$ (or $L_r^p$ with $p = 2, \infty$) the Sobolev (or Lebesgue) space of real valued functions. \\\\
The global well-posedness of (\ref{0.1}) in $H_r^{s}(\mathbb{R})$ with $s \geq 0$ was proved in \cite{5}\cite{12} by a synthesis of Tao’s gauge transformation \cite{9} and $X^{s, b}$ techniques. In \cite{19}, M. Ifrim and D. Tataru have provided a much simpler proof of the local well-posedness of (\ref{0.1}) in $L_r^2(\mathbb{R})$. Recently, R. Killip, T. Laurens and M. Vişan have proved the global well-posedness of (\ref{0.1}) in $H_r^{s}(\mathbb{R})$ with $-\frac{1}{2} < s < 0$ \cite{7}. The unconditional uniqueness in $H^s(\mathbb{R})$ with $s > 3-\sqrt{33 / 4}$ has been recently proved in \cite{13}.
\begin{theorem}
[\cite{5}, \cite{12}, \cite{19} \cite{13}, \cite{7}]
\label{theorem 1.1}
For every $u_{0} \in H_r^{s}(\mathbb{R})$ with $s > 3-\sqrt{33 / 4}$, there exists a unique solution $u \in C\left(\mathbb{R}, H_{r}^{s}(\mathbb{R})\right)$ of (\ref{0.1}) with $u(0) = u_0$. Also, for every $T > 0$, the flow map $u_{0} \in H^{s}(\mathbb{R}) \mapsto u \in C\left([-T, T], H^{s}(\mathbb{R})\right)$ is continuous. Moreover, this flow map $u_{0} \in H^{s}(\mathbb{R}) \mapsto u \in C\left([-T, T], H^{s}(\mathbb{R})\right)$ can be continuously extended to $H^s(\mathbb{R})$ for any $s > -\frac{1}{2}$.
\end{theorem}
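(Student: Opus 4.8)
The plan is to assemble this statement from the cited literature rather than to give a self-contained argument, since each ingredient is already a substantial theorem. The claim naturally splits into (i) global existence and uniqueness in $H_r^s$ with the stated threshold $s>3-\sqrt{33/4}$, (ii) continuity of the flow map on $H^s$, and (iii) the continuous extension of the flow map to $H^s$ for all $s>-\tfrac12$.

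For (i) with $s\ge 0$, I would invoke the global well-posedness theory of Ionescu–Kenig \cite{5} and Molinet–Pilod \cite{12}: combining Tao's gauge transformation \cite{9} with $X^{s,b}$-type estimates yields, for every $u_0\in H_r^s(\mathbb{R})$, a global solution $u\in C(\mathbb{R},H_r^s(\mathbb{R}))$, unique in a suitable gauge-transformed auxiliary space, with global-in-time control coming from conservation of the $L^2$ norm together with the higher-order a priori bounds. The simplified approach of Ifrim–Tataru \cite{19} reaches the same conclusion at $s=0$ via a modified-energy / normal-form method. To upgrade the conditional uniqueness (uniqueness only in the auxiliary space) to unconditional uniqueness in $C(\mathbb{R},H_r^s(\mathbb{R}))$ on the range $s>3-\sqrt{33/4}$, I would cite \cite{13}: any two $C_tH^s_x$ solutions with the same data coincide, with no reference to auxiliary norms. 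This is precisely the ingredient that fixes the threshold $3-\sqrt{33/4}$ (just above $H^{1/8}$), and it rests on a delicate iterated normal-form / commutator analysis. Persistence of regularity then guarantees that an $H^s$ datum, $s\ge 0$, produces a solution that stays in $C(\mathbb{R},H^s_r)$.

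For (ii), the continuity of $u_0\in H^s(\mathbb{R})\mapsto u\in C([-T,T],H^s(\mathbb{R}))$ for $s\ge 0$ is again part of \cite{5}, \cite{12}, \cite{19}; one should note that this continuity is not uniform continuity on bounded sets, so it is genuinely a qualitative statement and cannot be obtained by a contraction argument in a high-regularity space. For (iii), to extend the flow map continuously to $H_r^s(\mathbb{R})$ for every $s>-\tfrac12$, I would invoke the work of Killip–Laurens–Vişan \cite{7}, which exploits the complete integrability of (\ref{0.1}) — the Lax operator $L_u=D-T_u$ (with $T_u$ the Toeplitz operator of symbol $u$) and the hierarchy of conserved quantities it generates, together with the method of commuting flows — to produce a priori bounds uniform down to $H^{-1/2+}$ and a well-defined global flow below $L^2$; combining these uniform bounds with the density of $H^{\sigma}$ ($\sigma\ge 0$) in $H^s$ and equicontinuity of the flow gives the continuous extension for $-\tfrac12<s<0$.

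The main obstacle, were one to attempt genuine proofs rather than a synthesis, would be the two low-regularity statements: the unconditional uniqueness near $H^{1/8}$ of \cite{13} and the construction and continuity of the flow in negative Sobolev spaces of \cite{7}. Both require machinery far heavier than the classical $X^{s,b}$ theory — iterated normal forms and precise multilinear estimates in the first case, and the full integrable structure (Lax pair, conserved densities built from $L_u$, commuting-flows approximation) in the second — and neither is needed elsewhere in the present paper, whose own explicit-formula results are stated for $s\ge 0$; so here I would simply record Theorem \ref{theorem 1.1} as a black box assembled from \cite{5}, \cite{12}, \cite{19}, \cite{13}, \cite{7}.
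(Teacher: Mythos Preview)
Your proposal is correct and matches the paper's own treatment: Theorem~\ref{theorem 1.1} is stated purely as a citation of \cite{5}, \cite{12}, \cite{19}, \cite{13}, \cite{7}, with no proof given in the paper, and your synthesis of which reference supplies which ingredient is accurate. There is nothing to add.
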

Our aim in this paper is to give an explicit formula of the solution $u(t)$ in terms of any initial data $u_0$ in $L_r^2(\mathbb{R})$. Before presenting our main results, we need to introduce the Lax pair structure for (\ref{0.1}).
\subsection{The Lax pair}
In this paper, we denote by $L_{+}^{2}(\mathbb{R})$ the Hardy space corresponding to $L^2(\mathbb{R})$ functions having a Fourier transform supported in the domain $\xi \geq 0$. Recall that the space $L_{+}^{2}(\mathbb{R})$ identifies to holomorphic functions on the upper-half plane $\mathbb{C}_{+}:=\{z \in \mathbb{C}: \operatorname{Im}(z)>0\}$ such that
\begin{align*}
\sup _{y>0} \int_{\mathbb{R}}|f(x+i y)|^{2} d x<+\infty.
\end{align*}
The Riesz-Szeg\H{o} projector $\Pi$ is the orthogonal projector from $L^2(\mathbb{R})$ onto $L_{+}^2(\mathbb{R})$. It is given by
\begin{equation}
\label{1.10}
\forall f \in L^2(\mathbb{R}), \quad \forall z \in \mathbb{C}_{+}, \quad \Pi f (z) =  \frac{1}{2i\pi} \int_{\mathbb{R}} \frac{f(y)}{y-z} dy.
\end{equation}
The Toeplitz operator on $L_{+}^2(\mathbb{R})$ associated to a function $b \in L^{\infty}(\mathbb{R})$ is defined by
\begin{align*}
T_{b} f : =\Pi(b f), \quad f \in L_{+}^2(\mathbb{R}).
\end{align*}
We notice that for $b \in L_{r}^{\infty}(\mathbb{R})$, $T_b$ is a self-adjoint operator on $L_{+}^2(\mathbb{R})$. As shown in \cite[Proposition 2.2]{18}, we also remark that 
\begin{equation}
\label{1.3}
T_b \in \mathscr{L} \left(L_{+}^2(\mathbb{R})\right) \text{ if and only if } b \in L^{\infty}(\mathbb{R}).
\end{equation}
For $u \in L_{r}^{2}(\mathbb{R})$, the operator $L_u$ is defined by
\begin{align*}
\forall f \in \operatorname{Dom}\left(L_{u}\right)=H_{+}^{1}:=H^{1}(\mathbb{R}) \cap L_{+}^{2}(\mathbb{R}), \quad  L_{u} f :=D f-T_{u} f \text{ with } D:=\frac{1}{i} \frac{d}{d x}.
\end{align*}
We notice that $L_u$ is a semi–bounded selfadjoint operator on $L_{+}^2(\mathbb{R})$.\\\\
Also, we recall the definition of $G$ in \cite{1},
\begin{align*}
\forall f \in \operatorname{Dom}\left(G\right): = \left\{f \in L_{+}^{2}(\mathbb{R}):  \hat{f} \in H^{1}(0, \infty)\right\},\quad  \widehat{G f}(\xi) :=i \frac{d}{d \xi}[\hat{f}(\xi)] \mathbf{1}_{\xi>0}.
\end{align*}
Here $G$ is the adjoint of the operator of multiplication by $x$ on $L_{+}^2(\mathbb{R})$, and we notice that $\left(\operatorname{Dom}\left(G\right), -iG\right)$ is maximally dissipative. We also notice that 
\begin{align*}
\forall f \in \operatorname{Dom}\left(G\right), \left|\hat{f}\left(0^{+}\right)\right|^2 = -4\pi\operatorname{Im}\left\langle G f \mid f\right\rangle \leq 4\pi \|Gf\|_{L^2} \|f\|_{L^2}.
\end{align*}
Therefore, we can define
\begin{align*}
\forall f \in \operatorname{Dom}\left(G\right), \quad I_{+}(f):=\hat{f}\left(0^{+}\right).
\end{align*}
In fact, as observed in \cite[Lemma 3.4]{7}, the resolvent of $G$ is given by 
\begin{equation}
\label{1.20}
\forall z \in \mathbb{C}_{+},\quad \forall f \in L_{+}^2(\mathbb{R}),\quad (G-z \operatorname{Id})^{-1} f(x)=\frac{f(x)-f(z)}{x-z},
\end{equation}
and we have
\begin{equation}
\label{1.30}
\forall z \in \mathbb{C}_{+}, \quad \forall f \in L_{+}^2(\mathbb{R}), \quad f(z)=\frac{1}{2 i \pi} I_{+}\left((G-z \operatorname{Id})^{-1} f\right).
\end{equation}
\subsection{The explicit formula}
The explicit formula for the solution of (\ref{0.1}) with the initial data $u_0 \in L_{r}^{2}(\mathbb{R}) \cap L^{\infty}(\mathbb{R})$ has been introduced by P. Gérard in \cite{1}. In this paper, we extend the explicit formula of the solution to (\ref{0.1}) to the initial data $u_0 \in L_{r}^{2}(\mathbb{R})$.
\begin{theorem}
\label{theorem 1.2}
For $u_0 \in L_{r}^{2}(\mathbb{R})$, let $u \in C\left(\mathbb{R}, L_{r}^{2}(\mathbb{R})\right)$ be the corresponding solution of (\ref{0.1}) in the sense of the continuous extension of the flow map as shown in Theorem \ref{theorem 1.1}. Then $u(t)=\Pi u(t)+\overline{\Pi u}(t)$, with 
\begin{equation}
\label{0.2}
\Pi u(t, z)=\frac{1}{2 i \pi} I_{+}\left(\left(G-2 t L_{u_{0}}-z \mathrm{Id}\right)^{-1} \Pi u_{0}\right), \quad \forall z \in \mathbb{C}_{+},
\end{equation}
where
\begin{align*}
\left(G-2 t L_{u_{0}}-z \mathrm{Id}\right)^{-1}: L_{+}^2(\mathbb{R}) \rightarrow \operatorname{Dom}\left(A_{t}\right)  \text{ is well-defined for every } z \in \mathbb{C}_{+}.
\end{align*}
\end{theorem}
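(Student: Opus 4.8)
The plan is to deduce (\ref{0.2}) for a general $u_0\in L_r^2(\mathbb{R})$ from the case $u_0\in L_r^2(\mathbb{R})\cap L^\infty(\mathbb{R})$, which is P.\ Gérard's formula \cite{1}, by approximation, using the continuity on $L_r^2(\mathbb{R})$ of the flow map of (\ref{0.1}) supplied by Theorem \ref{theorem 1.1}. The preliminary task is to check that the right-hand side of (\ref{0.2}) is meaningful for every $u_0\in L_r^2(\mathbb{R})$. By Sobolev embedding, $\|u_0f\|_{L^2}\le\|u_0\|_{L^2}\|f\|_{L^\infty}\lesssim\|u_0\|_{L^2}\|f\|_{H^1}^{1/2}\|f\|_{L^2}^{1/2}$, so $T_{u_0}$ is a $D$-bounded perturbation of relative bound $0$; hence $L_{u_0}=D-T_{u_0}$ is selfadjoint on $H_+^1$ and $T_{u_0}\in\mathscr{L}(H_+^1,L_+^2)$ with $\|T_{u_0}\|_{\mathscr{L}(H_+^1,L_+^2)}\lesssim\|u_0\|_{L^2}$. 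Since $-iG$ is maximally dissipative and $2itL_{u_0}$ is skew-adjoint, $-i(G-2tL_{u_0})$ is dissipative on $\operatorname{Dom}(A_t):=\operatorname{Dom}(G)\cap H_+^1$ (where $A_t:=G-2tL_{u_0}$); granting, in addition, that it is maximal, for every $z\in\mathbb{C}_+$ the resolvent $R(z):=(G-2tL_{u_0}-z\,\mathrm{Id})^{-1}$ exists with $\|R(z)\|_{\mathscr{L}(L_+^2)}\le 1/\operatorname{Im}(z)$, and, mapping $L_+^2$ into $\operatorname{Dom}(A_t)\subseteq H_+^1$, it lies in $\mathscr{L}(L_+^2,H_+^1)$ by the closed graph theorem. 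In particular $R(z)\Pi u_0\in\operatorname{Dom}(G)$, so $I_+$ may be applied to it as in (\ref{1.30}).

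Now choose $u_0^{(n)}\in C_c^\infty(\mathbb{R};\mathbb{R})\subset L_r^2(\mathbb{R})\cap L^\infty(\mathbb{R})$ with $u_0^{(n)}\to u_0$ in $L_r^2(\mathbb{R})$, and let $u^{(n)}$ be the solution of (\ref{0.1}) with datum $u_0^{(n)}$. By the continuity of the $L_r^2$-flow map (Theorem \ref{theorem 1.1}), $u^{(n)}\to u$ in $C([-T,T],L_r^2(\mathbb{R}))$ for every $T>0$, so $\Pi u^{(n)}(t)\to\Pi u(t)$ in $L_+^2(\mathbb{R})$ for each $t$. Since evaluation at a fixed $z\in\mathbb{C}_+$ is a bounded linear functional on $L_+^2(\mathbb{R})$ by (\ref{1.10}) (equivalently (\ref{1.30})), we get $\Pi u^{(n)}(t,z)\to\Pi u(t,z)$. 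By \cite{1}, $\Pi u^{(n)}(t,z)=\tfrac{1}{2i\pi}I_+\big((G-2tL_{u_0^{(n)}}-z\,\mathrm{Id})^{-1}\Pi u_0^{(n)}\big)$, so it only remains to prove that this converges to $\tfrac{1}{2i\pi}I_+\big(R(z)\Pi u_0\big)$; together with the real-valuedness of $u(t)$, which gives $u(t)=\Pi u(t)+\overline{\Pi u}(t)$ at once from the Fourier characterization of $\Pi$, this finishes the proof.

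The heart of the matter is this last convergence. Write $R_n(z):=(G-2tL_{u_0^{(n)}}-z\,\mathrm{Id})^{-1}$. From $L_{u_0^{(n)}}-L_{u_0}=-T_{u_0^{(n)}-u_0}$ and the resolvent identity, $R_n(z)-R(z)=2t\,R_n(z)\,T_{u_0^{(n)}-u_0}\,R(z)$. Using $\|R_n(z)\|_{\mathscr{L}(L_+^2)}\le 1/\operatorname{Im}(z)$, $\|T_{u_0^{(n)}-u_0}\|_{\mathscr{L}(H_+^1,L_+^2)}\lesssim\|u_0^{(n)}-u_0\|_{L^2}$, and $R(z)\in\mathscr{L}(L_+^2,H_+^1)$ from the first paragraph, one gets $\|R_n(z)-R(z)\|_{\mathscr{L}(L_+^2)}\to 0$; feeding this back into the same identity and absorbing the $R_n(z)$ factor (a Neumann-series argument, valid once $n$ is large) gives both $\sup_n\|R_n(z)\|_{\mathscr{L}(L_+^2,H_+^1)}<\infty$ and $R_n(z)\Pi u_0^{(n)}\to R(z)\Pi u_0$ in $H_+^1$. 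Since $I_+$ is not continuous on $L_+^2$ but is continuous on $\operatorname{Dom}(G)$ for the graph norm — by the estimate $|I_+(g)|^2=|\widehat g(0^+)|^2\le 4\pi\|Gg\|_{L^2}\|g\|_{L^2}$ recalled in the introduction — it suffices to check that $G R_n(z)\Pi u_0^{(n)}\to G R(z)\Pi u_0$ in $L_+^2$. This follows from the identity $G R_n(z)=\mathrm{Id}+(2tL_{u_0^{(n)}}+z\,\mathrm{Id})R_n(z)$ combined with $L_{u_0^{(n)}}\to L_{u_0}$ in $\mathscr{L}(H_+^1,L_+^2)$ and the $H_+^1$-convergence of $R_n(z)\Pi u_0^{(n)}$. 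Hence $I_+\big(R_n(z)\Pi u_0^{(n)}\big)\to I_+\big(R(z)\Pi u_0\big)$, as needed.

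I expect the main obstacle to be the maximal dissipativity of $-i(G-2tL_{u_0})$ on $\operatorname{Dom}(G)\cap H_+^1$ for merely square-integrable $u_0$: dissipativity is the elementary computation above, but $G$ is not selfadjoint, so the surjectivity of $G-2tL_{u_0}-z\,\mathrm{Id}$ for $z\in\mathbb{C}_+$ (equivalently, that $R(z)$ is everywhere defined) cannot be read off from Kato–Rellich and has to be obtained directly — for instance by conjugating with the unitary group $e^{2it\tau L_{u_0}}$, by a Trotter-type approximation in $u_0$, or by adapting the resolvent estimates of \cite{7}. A secondary, purely quantitative, point is the uniform bound $\sup_n\|R_n(z)\|_{\mathscr{L}(L_+^2,H_+^1)}<\infty$, for which the estimate $\|T_b\|_{\mathscr{L}(H_+^1,L_+^2)}\lesssim\|b\|_{L^2}$ applied to $b=u_0^{(n)}-u_0$ is exactly what is needed.
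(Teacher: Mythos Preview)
Your overall strategy---approximate $u_0\in L_r^2$ by $u_0^{(n)}\in L_r^2\cap L^\infty$, invoke continuity of the flow map, and pass to the limit in the right-hand side of (\ref{0.2})---is exactly the paper's. The genuine gap is the one you flag yourself: the maximal dissipativity of $-i(G-2tL_{u_0})$. But there is also a concrete error that makes the gap worse than you suggest. You take the domain to be $\operatorname{Dom}(G)\cap H_+^1$, whereas the paper's $A_t=-iG+2itD$ has domain $\{f:e^{it\xi^2}\hat f\in H^1(0,\infty)\}=e^{-itD^2}\operatorname{Dom}(G)$, which \emph{strictly} contains $\operatorname{Dom}(G)\cap H_+^1$ (e.g.\ $\hat f(\xi)=e^{-it\xi^2}/(1+\xi)$). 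Since a maximally dissipative operator admits no proper dissipative extension, $-i(G-2tL_{u_0})$ is \emph{not} maximally dissipative on your smaller domain, so the hypothesis you ``grant'' is in fact false there. This also undermines your convergence argument: the claim $R(z)\in\mathscr L(L_+^2,H_+^1)$, on which the bound $\|T_{u_0^{(n)}-u_0}R(z)\|\lesssim\|u_0^{(n)}-u_0\|_{L^2}$ rests, fails because the range of $R(z)$ is $\operatorname{Dom}(A_t)\not\subset H_+^1$; controlling $T_b$ on $\operatorname{Dom}(A_t)$ by $\|b\|_{L^2}$ requires an $L^\infty$ bound on elements of $\operatorname{Dom}(A_t)$, which is precisely what is missing.

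The paper supplies this missing ingredient via the conjugation $-iG+2itL_{u_0}=e^{-itD^2}\mathcal G_t\,e^{itD^2}$ with $\mathcal G_t=-iG+B_{u_0}^t$, $B_{u_0}^t=-2it\,e^{itD^2}T_{u_0}e^{-itD^2}$, viewed on $\operatorname{Dom}(G)$. The key lemma is a dispersive estimate: for $f\in\operatorname{Dom}(G)$ one has $\|t^{1/2}e^{-itD^2}f\|_{L^\infty}\lesssim \|Gf\|_{L^2}^{a}\|f\|_{L^2}^{1-a}$ (obtained by subtracting a fixed $g$ with $I_+(g)=1$ so that $f-I_+(f)g\in L^1$, then applying the Schr\"odinger $L^1\!\to\! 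L^\infty$ decay). This makes $B_{u_0}^t$ dissipative and infinitesimally $G$-bounded, so Kato--Rellich gives maximal dissipativity of $\mathcal G_t$ on $\operatorname{Dom}(G)$, hence of $-iG+2itL_{u_0}$ on the correct domain $\operatorname{Dom}(A_t)$. Your proposed conjugation by $e^{2it\tau L_{u_0}}$ does not yield a comparable simplification, since $L_{u_0}$ has no explicit commutation relation with $G$. Once the correct domain and the dispersive estimate are in hand, the convergence $R_n(z)\Pi u_0^{(n)}\to R(z)\Pi u_0$ in $\operatorname{Dom}(G)$-graph norm (after conjugation) follows, and the rest of your outline goes through.
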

\begin{remark}
In \cite{7}, R. Killip, T. Laurens and M. Vişan have obtained another explicit formula for a Hamiltonian system corresponding to (\ref{0.1})(see \cite[Theorem 6.1]{7} for details). Also, from this formula, they have recovered the formula (\ref{0.2}) with the initial data $u_0 \in L_r^2(\mathbb{R})\cap L^{\infty}(\mathbb{R})$, which has been firstly obtained in \cite{1}.
\end{remark}
\subsection{Zero dispersion limit}
In \cite{6}, P. Gérard considered the Benjamin–Ono equation on the line with a small dispersion $\varepsilon > 0$,
\begin{equation}
\label{1.02}
\begin{aligned}
& \partial_{t} u=\partial_{x}\left(\varepsilon|D| u-u^{2}\right), \quad (t,x) \in \mathbb{R}\times \mathbb{R},\\
& u^{\varepsilon}(0, x)=u_{0}(x).
\end{aligned}
\end{equation}
Observe that the $L^2$ norm of $u^{\varepsilon}(t)$ is independent of $t$, equal to the $L^2$ norm of $u_0$, so there exists a subsequence $\varepsilon_j$ tending to $0$ such that $u_{\varepsilon_j}(t)$ has a weak limit in $L^2(\mathbb{R})$, and we want to show that all these weak limits coincide under certain initial data conditions. If all these weak limits coincide, we call this weak limit the zero dispersion limit. Combined with the explicit formula (\ref{0.2}) for $u_0 \in L_r^2(\mathbb{R}) \cap L^{\infty} (\mathbb{R})$, P. Gérard has obtained the explicit formula for the zero dispersion limit in \cite{6}. In this paper, since we have obtained the explicit formula (\ref{0.2}) with $u_0 \in L_r^2(\mathbb{R})$, we can extend the explicit formula for the zero dispersion limit to more singular initial data.
\begin{theorem}
\label{theorem 1.3}
Let $u_0 \in L_r^2(\mathbb{R}) \cap L_{loc}^{\infty} (\mathbb{R})$ with $\lim_{x \to \infty} \frac{|u_0(x)|}{|x|} = 0$. Then for every $ t\in \mathbb{R}$, the corresponding solution $u^{\varepsilon}(t)$ to (\ref{1.02}) converges weakly in $L^{2}(\mathbb{R})$ to $Z D\left[u_{0}\right](t)$, characterized by
\begin{align*}
\forall x \in \mathbb{R}, \quad Z D\left[u_{0}\right](t, x)=\Pi Z D\left[u_{0}\right](t, x)+\overline{\Pi Z D\left[u_{0}\right](t, x)}
\end{align*}
and
\begin{equation}
\label{1.4}
\begin{aligned}
\forall z \in \mathbb{C}_{+}, \quad \Pi Z D\left[u_{0}\right](t, z) & =\frac{1}{2 i \pi} I_{+}\left(\left(G+2 t T_{u_{0}}-z \mathrm{Id}\right)^{-1} \Pi u_{0}\right) \\ &= \frac{1}{4i\pi t}\int_{\mathbb{R}} {\rm Log }\left(1 + \frac{2t u_0(y)}{y-z}\right) dy,
\end{aligned}
\end{equation}
where ${\rm Log }$ denotes the principal value of the logarithm, and
\begin{align*}
\left(G+2 t T_{u_{0}}-z \mathrm{Id}\right)^{-1}: L_{+}^2(\mathbb{R}) \rightarrow \operatorname{Dom}(G)  \text{ is well-defined for every } z \in \mathbb{C}_{+}.
\end{align*}
\end{theorem}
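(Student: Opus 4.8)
The plan is to reduce the weak convergence to a pointwise statement on $\mathbb{C}_+$ and then to analyse the explicit formula (\ref{0.2}) as $\varepsilon\to0$. Since (as noted after (\ref{1.02})) the $L^2$ norm of $u^\varepsilon(t)$ equals $\|u_0\|_{L^2}$ for all $\varepsilon>0$ and all $t$, the family $\bigl(u^\varepsilon(t)\bigr)_{\varepsilon>0}$ is bounded in $L^2(\mathbb{R})$, hence weakly sequentially precompact, and a weak limit is determined once it is known against the Cauchy kernels $y\mapsto(y-w)^{-1}$, $w\in\mathbb{C}\setminus\mathbb{R}$, which are total in $L^2(\mathbb{R})$. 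As $\langle f,(2i\pi(\bar z-\cdot))^{-1}\rangle=\Pi f(z)$ for every $f\in L^2(\mathbb{R})$ and $z\in\mathbb{C}_+$, and as $u^\varepsilon(t)$ is real and hence determined by $\Pi u^\varepsilon(t)$, it suffices to show that for each fixed $z\in\mathbb{C}_+$, $\Pi u^\varepsilon(t,z)$ converges to the right-hand side of (\ref{1.4}); uniqueness of weak limits then gives $u^\varepsilon(t)\rightharpoonup ZD[u_0](t)$.

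Using the scaling $u^\varepsilon(t,x)=\varepsilon\,v(\varepsilon t,x)$, where $v\in C(\mathbb{R},L^2_r(\mathbb{R}))$ solves (\ref{0.1}) with datum $u_0/\varepsilon\in L^2_r(\mathbb{R})$, together with Theorem \ref{theorem 1.2} applied to $v$ and the identities $L_{u_0/\varepsilon}=D-\varepsilon^{-1}T_{u_0}$ and $\Pi(u_0/\varepsilon)=\varepsilon^{-1}\Pi u_0$, one obtains
\begin{equation*}
\Pi u^\varepsilon(t,z)=\varepsilon\,\Pi v(\varepsilon t,z)=\frac{1}{2i\pi}\,I_+\!\Bigl(\bigl(G-2\varepsilon t\,D+2t\,T_{u_0}-z\,\mathrm{Id}\bigr)^{-1}\Pi u_0\Bigr),\qquad z\in\mathbb{C}_+,
\end{equation*}
where the inverse is the one furnished by Theorem \ref{theorem 1.2} for the datum $u_0/\varepsilon$ (so that only $u_0\in L^2_r(\mathbb{R})$ is used). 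Writing $B_\varepsilon:=G-2\varepsilon t\,D+2t\,T_{u_0}$ and $B_0:=G+2t\,T_{u_0}$, the statement reduces to two points: $(\mathrm{i})$ $(B_\varepsilon-z\,\mathrm{Id})^{-1}\Pi u_0\to(B_0-z\,\mathrm{Id})^{-1}\Pi u_0$ in a sense allowing $I_+$ to pass to the limit, and $(\mathrm{ii})$ the identification of $\tfrac{1}{2i\pi}I_+\bigl((B_0-z\,\mathrm{Id})^{-1}\Pi u_0\bigr)$ with the $\mathrm{Log}$-integral.

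For $(\mathrm{i})$ I would construct $(B_0-z\,\mathrm{Id})^{-1}$ by maximal dissipativity. If $u_0\in L^\infty$, then $-i(2t\,T_{u_0})$ is bounded and skew-adjoint, so $-iB_0$ is a bounded skew-adjoint perturbation of the maximally dissipative $-iG$ and is itself maximally dissipative; hence $(B_0-z\,\mathrm{Id})^{-1}\colon L^2_+(\mathbb{R})\to\operatorname{Dom}(G)$ exists for every $z\in\mathbb{C}_+$. For $u_0\in L^2_r(\mathbb{R})\cap L^\infty_{\mathrm{loc}}(\mathbb{R})$ with $|u_0(x)|/|x|\to0$, the key estimate $\sup_{|x|>R}|u_0(x)|/|x-z|\to0$ — this is exactly where the growth hypothesis enters — shows that $T_{u_0}$ is $G$-bounded with $G$-relative bound $0$ and that $\|T_{u_0}(G-z\,\mathrm{Id})^{-1}\|\to0$ as $\operatorname{Im}z\to+\infty$; thus $B_0$ is closed on $\operatorname{Dom}(G)$, $-iB_0$ is dissipative with a point of $\mathbb{C}_+$ in its resolvent set, hence maximally dissipative, and the conclusion persists. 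The same estimate makes the $\mathrm{Log}$-integral in (\ref{1.4}) absolutely convergent: its integrand is $O(|u_0(y)|/|y|)$ at infinity and locally bounded, and $1+2t\,u_0(y)/(y-z)$ avoids $(-\infty,0]$ since its imaginary part has the sign of $t\,u_0(y)$ and vanishes only where $u_0(y)=0$. Finally, since $D\ge0$ is self-adjoint on $L^2_+(\mathbb{R})$, $2\varepsilon t\,D$ is a vanishing perturbation; combining the second resolvent identity with the uniform-in-$\varepsilon$ bounds supplied by the growth estimate gives $(B_\varepsilon-z\,\mathrm{Id})^{-1}\Pi u_0\to(B_0-z\,\mathrm{Id})^{-1}\Pi u_0$ in the graph norm of $G$, and as $|I_+(f)|^2\le4\pi\|Gf\|_{L^2}\|f\|_{L^2}$ the functional $I_+$ is continuous there, which yields the first equality in (\ref{1.4}).

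Step $(\mathrm{ii})$ is the crux and is the integral identity announced in the abstract. With $\psi:=(G+2t\,T_{u_0}-z\,\mathrm{Id})^{-1}\Pi u_0\in\operatorname{Dom}(G)$, applying $(G-z\,\mathrm{Id})^{-1}$ (given by (\ref{1.20})) to $(G+2t\,T_{u_0}-z\,\mathrm{Id})\psi=\Pi u_0$, then taking $I_+$ and invoking (\ref{1.30}) in the form $I_+\bigl((G-z\,\mathrm{Id})^{-1}g\bigr)=2i\pi\,g(z)$, one derives the pointwise functional equation
\begin{equation*}
\bigl(x-z+2t\,u_0(x)\bigr)\psi(x)=\Pi u_0(x)-\frac{1}{2i\pi}\,I_+(\psi)+2t\,(\mathrm{Id}-\Pi)(u_0\psi)(x),\qquad x\in\mathbb{R}.
\end{equation*}
Differentiating $I_+(\psi)$ in $t$ via $\partial_t\psi=-2(G+2t\,T_{u_0}-z\,\mathrm{Id})^{-1}T_{u_0}\psi$ and closing the resulting first-order relation for $t\mapsto2t\,I_+(\psi)$ with the functional equation — together with elementary facts such as $\mathrm{p.v.}\!\int_{\mathbb{R}}(y-z)^{-1}\,dy=i\pi$ for $z\in\mathbb{C}_+$ — one is reduced to integrating a scalar ordinary differential equation from $0$ to $t$, whose primitive supplies the principal-value logarithm; the value at $t=0$ is $I_+\bigl((G-z\,\mathrm{Id})^{-1}\Pi u_0\bigr)=\int_{\mathbb{R}}u_0(y)(y-z)^{-1}\,dy=\lim_{t\to0}(2t)^{-1}\!\int_{\mathbb{R}}\operatorname{Log}\bigl(1+2t\,u_0(y)/(y-z)\bigr)\,dy$. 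Both sides depend continuously on $u_0$ in the topology of $(\mathrm{i})$, so the identity extends from $u_0\in L^2_r(\mathbb{R})\cap L^\infty(\mathbb{R})$ (the case essentially treated in \cite{6}) to all admissible $u_0$ via $u_0\mathbf{1}_{|x|\le R}\to u_0$. I expect the derivation and closing of that differential relation — essentially the identity (\ref{3.07}) — to be the principal difficulty: because $G$ is the adjoint of multiplication by $x$ rather than multiplication by $x$ itself, the Toeplitz defect $(\mathrm{Id}-\Pi)(u_0\psi)$ and the frequency-zero term $I_+(\psi)$ do not cancel termwise, and the closed form emerges only after integrating the $t$-flow, and only under the sublinear growth of $u_0$.
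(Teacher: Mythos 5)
Your overall architecture matches the paper's: reduce weak convergence to pointwise convergence of $\Pi u^{\varepsilon}(t,z)$ on $\mathbb{C}_+$, obtain the formula with $G-2\varepsilon t D+2tT_{u_0}$ by scaling and Theorem \ref{theorem 1.2}, build $(G+2tT_{u_0}-z\mathrm{Id})^{-1}$ by showing $T_{u_0}$ is $G$-bounded with relative bound $0$ under the sublinear growth hypothesis (this part of your argument is essentially Lemma \ref{lemma 3.01} and is fine). However, there are two genuine gaps.

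First, in your step (i) you dispose of the limit $\varepsilon\to0$ by saying that $2\varepsilon tD$ is ``a vanishing perturbation'' and that the second resolvent identity plus ``uniform-in-$\varepsilon$ bounds supplied by the growth estimate'' give convergence in the graph norm of $G$. This does not work as stated: the second resolvent identity produces the term $(B_\varepsilon-z\mathrm{Id})^{-1}(2\varepsilon tD)(B_0-z\mathrm{Id})^{-1}\Pi u_0$, and $(B_0-z\mathrm{Id})^{-1}\Pi u_0$ lies only in $\operatorname{Dom}(G)$, on which $D$ is not defined, let alone $O(\varepsilon^{-1})$-bounded; the growth estimate controls $T_{u_0}(G-z\mathrm{Id})^{-1}$ but says nothing about $D$. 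This is precisely the difficulty the paper flags explicitly and circumvents by inserting the frequency cutoff $\psi(\sqrt{\varepsilon}D)$ (so that $\varepsilon D\psi(\sqrt{\varepsilon}D)$ has operator norm $O(\sqrt{\varepsilon})$), mollifying $u_0$ to $\psi(\sqrt{\varepsilon}D)u_0$ in the intermediate resolvent, controlling the resulting commutators, and using the a priori bound of Lemma \ref{lemma 3.001} (obtained from the conjugation $f(t)=U(t)\mathrm{e}^{itL_{u_0}^2}f_0$ for smooth data and approximation) to handle $I_+$ applied to $A_z^{\varepsilon}(u_0)$ uniformly in $\varepsilon$. None of this machinery is replaceable by the one-line perturbation claim.

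Second, your step (ii) — which you yourself identify as the crux — is not carried out. You propose to differentiate $I_+(\psi)$ in $t$ and ``close'' a scalar ODE, but you concede that the nonlocal terms $(\mathrm{Id}-\Pi)(u_0\psi)$ and $I_+(\psi)$ do not cancel termwise and that closing the relation is ``the principal difficulty.'' The paper proves the second equality of (\ref{1.4}) by an entirely different and concrete route: for $\Im(z)$ large, $\left\|2tT_{u_0}(G-z\mathrm{Id})^{-1}\right\|<1$ and $2|t|\|u_0(\cdot)/(\cdot-z)\|_{L^\infty}<1$, so one expands $\left(\mathrm{Id}+2tT_{u_0}(G-z\mathrm{Id})^{-1}\right)^{-1}$ as a Neumann series, shows by induction that the $n$-th term equals $\frac{1}{2i\pi}\int_{\mathbb{R}}f_zT_{f_z}^{n-2}\Pi f_z$ with $f_z=u_0/(\cdot-z)$, and then invokes the combinatorial identity of Lemma \ref{lemma 3.2} to get $\int_{\mathbb{R}}f_zT_{f_z}^{n-2}\Pi f_z=\frac{1}{n}\int_{\mathbb{R}}f_z^n$, after which the series sums to the logarithm and analytic continuation extends the identity to all of $\mathbb{C}_+$. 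Without a proof of that identity (or of your ODE closure), the central formula of the theorem remains unestablished. Your proposed fallback — extending from $u_0\in L^\infty$ by truncation $u_0\mathbf{1}_{|x|\le R}$ — also needs a justification of resolvent convergence under truncation that you do not supply.
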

\begin{remark}
We observe that $u_0 \in L_r^2(\mathbb{R})$ with $|u_0(x)| \leq C \langle x \rangle^{k}(k<1)$ satisfies the initial data condition in Theorem \ref{theorem 1.3}, so we can give the formula of the zero dispersion limit for every $t \in \mathbb{R}$ with such an initial datum.
\end{remark}
\begin{remark}
\label{remark 1.6}
In \cite{6}, P. Gérard has also obtained the following description of the zero dispersion limit: Assume that the initial data $u_0 \in L_r^2(\mathbb{R}) \cap C^1(\mathbb{R})$ with $|u_0(x)|+|u_0^{\prime}(x)| \rightarrow 0$, then for every $t \in \mathbb{R}$, the set $K_t(u_0)$ of critical values of the function
\begin{align*}
y \in \mathbb{R} \mapsto y+2 t u_0(y)
\end{align*}
is a compact subset of measure 0. For every connected component $\Omega$ of $K_t\left(u_0\right)^c$, there exists a nonnegative integer $\ell$ such that, for every $x \in \Omega$, the equation
\begin{align*}
y+2 t u_0(y)=x
\end{align*}
has $2\ell+1$ simple real solutions
\begin{align*}
y_0(t, x)<y_1(t, x)<\cdots<y_{2 \ell}(t, x),
\end{align*}
and the zero dispersion limit is given by
\begin{equation}
\label{1.9}
Z D\left[u_0\right](t, x)=\sum_{k=0}^{2 \ell}(-1)^k u_0\left(y_k(t, x)\right).
\end{equation}
Formula (\ref{1.9}) was proved by Miller-Wetzel \cite{16}(see also Miller-Xu \cite{17}) in the special case of a rational Klaus–Shaw initial potential, and by L. Gassot \cite{14}\cite{15} in the special case of a general bell shaped initial potential with periodic boundary conditions.
\end{remark}
\begin{remark}
In \cite{6}, P. Gérard has obtained (\ref{1.4}) with the initial data $u_0 \in L_r^2(\mathbb{R}) \cap L^{\infty} (\mathbb{R})$. In the derivation of the second equality in (\ref{1.4}), P. Gérard first considered the rational initial data to deduce this equality, and then extend this equality to $u_0 \in L_r^2(\mathbb{R}) \cap L^{\infty} (\mathbb{R})$. However, this proof is not a direct derivation. In this paper, we provide a direct proof of the second equality of (\ref{1.4}), and this direct approach allows us to extend this equality to $u_0 \in L_r^2(\mathbb{R}) \cap L_{loc}^{\infty} (\mathbb{R})$ with $\lim_{x \to \infty} \frac{|u_0(x)|}{|x|} = 0$. 
\end{remark}
In the direct derivation of the second equality of (\ref{1.4}), we also find an interesting integral equality (\ref{3.006}), which might be useful in other contexts. We summarize this interesting equality in the following lemma.
\begin{lemma}
\label{lemma 1.7}
For $f \in L^2(\mathbb{R}) \cap L^{\infty}(\mathbb{R})$ and $n \in \mathbb{N}_{\geq 1}$, we have
\begin{equation}
\label{3.006}
\begin{aligned}
& \int_{\mathbb{R}^n} f(y_1) f(y_2-y_1) ... f(y_n - y_{n-1})f(-y_n) dy_1 dy_2...dy_n \\  = & (n+1) \int_{\{\forall 1\leq j \leq n, y_j > 0\}} f(y_1) f(y_2-y_1) ... f(y_n - y_{n-1}) f(-y_n)dy_1 dy_2...dy_n.
\end{aligned}
\end{equation}
\end{lemma}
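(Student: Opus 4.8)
\emph{Proof sketch.} The plan is to change variables so that the integrand becomes invariant under cyclic permutations of $n+1$ coordinates, and then to recognise the region $\{y_j>0\ \forall j\}$ as one tile of a cyclic partition of $\mathbb{R}^n$. Concretely, I would first substitute $x_j:=y_j-y_{j-1}$ for $1\le j\le n$ (with the convention $y_0:=0$): this is a unipotent linear change of variables with Jacobian $1$, whose inverse is $y_j=x_1+\dots+x_j$. Setting $x_{n+1}:=-(x_1+\dots+x_n)=-y_n$ one has $\sum_{j=1}^{n+1}x_j=0$, the left-hand integrand equals $F(x):=f(x_1)f(x_2)\cdots f(x_{n+1})$, which is invariant under cyclic permutations of $(x_1,\dots,x_{n+1})$, and the domain $\{y_j>0\ \forall j\}$ becomes $A:=\{x\in\mathbb{R}^n:\ x_1+\dots+x_j>0\text{ for }j=1,\dots,n\}$. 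Hence it suffices to prove \[ \int_{\mathbb{R}^n}F(x)\,dx=(n+1)\int_A F(x)\,dx. \] Running the argument first with $|f|$ in place of $f$, Tonelli's theorem makes both sides well defined in $[0,+\infty]$; the passage to signed $f$ (and thereby the use of $f\in L^2\cap L^\infty$) requires finiteness of these integrals, which I would handle either through the extra structure of the particular $f$ arising in the proof of (\ref{1.4}) or via an a priori integrability bound.

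Next I would introduce the linear map $\phi\colon\mathbb{R}^n\to\mathbb{R}^n$ defined by $\phi(x_1,\dots,x_n):=(x_2,\dots,x_n,\,-(x_1+\dots+x_n))$, which realises the cyclic shift $(x_1,\dots,x_{n+1})\mapsto(x_2,\dots,x_{n+1},x_1)$ in the $n$ free coordinates. A direct computation (its matrix is the companion matrix of $1+t+\dots+t^n$) gives $|\det D\phi|=1$, and $F\circ\phi=F$ by the cyclic invariance of $F$; consequently $\int_{\phi^{-k}(A)}F\,dx=\int_A F\,dx$ for every $k\in\mathbb{Z}$, by the change of variables $u=\phi^{k}(x)$.

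The heart of the proof — and the step I expect to be the main obstacle — is a \emph{cycle lemma}: for Lebesgue-almost every $x\in\mathbb{R}^n$ there is exactly one $k\in\{0,1,\dots,n\}$ with $\phi^{k}(x)\in A$, that is, $\mathbb{R}^n=\bigsqcup_{k=0}^{n}\phi^{-k}(A)$ up to a Lebesgue-null set. To prove this I would set $s_0:=0$ and $s_j:=x_1+\dots+x_j$ for $1\le j\le n$; since $s_{n+1}=0$, the map $j\mapsto s_j$ extends to an $(n+1)$-periodic function on $\mathbb{Z}$. Telescoping shows that the successive partial sums of the shifted tuple $\phi^{k}(x)$ are $s_{k+1}-s_k,\dots,s_{k+n}-s_k$ (indices mod $n+1$), so $\phi^{k}(x)\in A$ if and only if $s_k<s_m$ for all $m\not\equiv k\ (\mathrm{mod}\ n+1)$, i.e. if and only if $k$ is the unique minimiser of $\{s_0,s_1,\dots,s_n\}$. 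Outside the null set $\bigcup_{0\le i<j\le n}\{s_i=s_j\}$ such a minimiser exists and is unique, which proves the lemma. Decomposing $\mathbb{R}^n$ along this partition and using the two invariances above, \[ \int_{\mathbb{R}^n}F\,dx=\sum_{k=0}^{n}\int_{\phi^{-k}(A)}F\,dx=\sum_{k=0}^{n}\int_A F\,dx=(n+1)\int_A F\,dx, \] and undoing the substitution $x_j=y_j-y_{j-1}$ recovers (\ref{3.006}). Besides the cycle lemma, the remaining points — the absolute integrability allowing the passage from $|f|$ to $f$, and the measurability bookkeeping in the tiling — are routine.
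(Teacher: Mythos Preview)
Your argument is correct. Both your proof and the paper's rest on the same cyclic shift of variables --- in your $x$-coordinates the map $\phi$, in the paper's $y$-coordinates the family $z_\ell=y_{\ell+i}-y_i$ (indices taken mod $n+1$, with $y_0=y_{n+1}=0$); these are literally the same linear maps, with Jacobian of modulus $1$ and leaving the integrand invariant. The difference is in the bookkeeping of the decomposition. You invoke the \emph{cycle lemma}: setting $s_j=y_j$, almost every point has a unique index $k$ with $s_k=\min_m s_m$, so the $n+1$ sets $\phi^{-k}(A)$ tile $\mathbb{R}^n$ and the factor $n+1$ drops out in one stroke. The paper instead stratifies $\mathbb{R}^n$ as $\bigcup_{j=0}^n A_j$ by the number $j$ of negative coordinates $y_\ell$, and for each fixed $j$ builds a bijection $A_0\to A_j$ by further splitting $A_0=\bigcup_i B_{0,i,j}$ according to the position $i$ of the $j$-th smallest $y_\ell$ and sending $B_{0,i,j}$ to $B_{j,n+1-i,1}$ via the shift by $i$. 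Your route is shorter and more conceptual (it is the classical Dvoretzky--Motzkin/cycle-lemma mechanism); the paper's route yields the slightly finer statement $\int_{A_0}=\int_{A_j}$ for every individual $j$, not just their sum. Your remark about absolute integrability is well taken and is not addressed in the paper either; in the application one has $f=\widehat{f_z}$ with $f_z\in L^1\cap L^\infty$, so both sides are finite, which is exactly the ``extra structure'' you allude to.
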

With a slight modification of the proof of Theorem \ref{theorem 1.3}, we can obtain the following zero dispersion limit result for $u_0 \in L_r^2(\mathbb{R})$ with $|u_0(x)| \leq C \langle x\rangle$ in a short time.
\begin{corollary}
\label{corollary 1.4}
Let $u_0 \in L_r^2(\mathbb{R})$ with $|u_0(x)| \leq C \langle x\rangle$. Then for every $ |t| < \frac{1}{2C}$, the corresponding solution $u^{\varepsilon}(t)$ to (\ref{1.02}) converges weakly in $L^{2}(\mathbb{R})$ to $Z D\left[u_{0}\right](t)$, characterized by
\begin{align*}
\forall x \in \mathbb{R}, \quad Z D\left[u_{0}\right](t, x)=\Pi Z D\left[u_{0}\right](t, x)+\overline{\Pi Z D\left[u_{0}\right](t, x)}
\end{align*}
and
\begin{equation}
\label{1.41}
\begin{aligned}
\forall z \in \mathbb{C}_{+}, \quad \Pi Z D\left[u_{0}\right](t, z) & =\frac{1}{2 i \pi} I_{+}\left(\left(G+2 t T_{u_{0}}-z \mathrm{Id}\right)^{-1} \Pi u_{0}\right) \\ &= \frac{1}{4i\pi t}\int_{\mathbb{R}} {\rm Log }\left(1 + \frac{2t u_0(y)}{y-z}\right) dy,
\end{aligned}
\end{equation}
where ${\rm Log }$ denotes the principal value of the logarithm, and
\begin{align*}
\left(G+2 t T_{u_{0}}-z \mathrm{Id}\right)^{-1}: L_{+}^2(\mathbb{R}) \rightarrow \operatorname{Dom}(G)  \text{ is well-defined for every } z \in \mathbb{C}_{+}.
\end{align*}
\end{corollary}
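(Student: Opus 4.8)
The plan is to rerun the proof of Theorem \ref{theorem 1.3}, observing that the decay hypothesis $\lim_{|x|\to\infty}|u_0(x)|/|x|=0$ is used there only to control the unbounded perturbation $2tT_{u_0}$ of $G$, and that under the weaker bound $|u_0(x)|\le C\langle x\rangle$ the required control survives precisely as long as $2|t|C<1$. The first and principal step would therefore be operator-theoretic: construct the operator $G+2tT_{u_0}$ on its natural domain $\{f\in\operatorname{Dom}(G): u_0 f\in L^2(\mathbb R)\}$ — which is strictly contained in $\operatorname{Dom}(G)$ once $u_0$ genuinely grows, because elements of $\operatorname{Dom}(G)$ with $I_+(f)\ne 0$ carry a $1/x$ tail — show it is closed, and show that $-i(G+2tT_{u_0})$ is maximally dissipative by combining the maximal dissipativity of $(\operatorname{Dom}(G),-iG)$ with the fact that $2tT_{u_0}$ is symmetric (so that $-2itT_{u_0}$ is dissipative) and with the relevant coercivity/relative estimate. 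I expect this estimate to be exactly what forces $|t|<\frac1{2C}$: it should hold once $2|t|\sup_x\frac{|u_0(x)|}{\langle x\rangle}<1$, which plays the role of the relative bound $0$ that $|u_0(x)|/|x|\to 0$ supplies for every $t$ in Theorem \ref{theorem 1.3}. Maximal dissipativity then yields that $(G+2tT_{u_0}-z\mathrm{Id})^{-1}\colon L_+^2(\mathbb R)\to\operatorname{Dom}(G)$ is well defined and bounded (with norm $\le 1/\operatorname{Im}z$) for every $z\in\mathbb C_+$. I expect this construction, together with pinning down the sharp threshold $\frac1{2C}$, to be the main obstacle.

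Granting this, I would obtain the zero dispersion limit as in Theorem \ref{theorem 1.3}. Writing $L_{u_0}^\varepsilon:=\varepsilon D-T_{u_0}$, the rescaled explicit formula of Theorem \ref{theorem 1.2} (which only needs $u_0\in L_r^2(\mathbb R)$) gives $\Pi u^\varepsilon(t,z)=\frac1{2i\pi}I_+\!\big((G-2tL_{u_0}^\varepsilon-z\mathrm{Id})^{-1}\Pi u_0\big)$ with $-2tL_{u_0}^\varepsilon=2tT_{u_0}-2t\varepsilon D$. Since $-2t\varepsilon D\to 0$ on a common core and the resolvents are uniformly bounded by $1/\operatorname{Im}z$, I would deduce strong resolvent convergence $(G-2tL_{u_0}^\varepsilon-z\mathrm{Id})^{-1}\to(G+2tT_{u_0}-z\mathrm{Id})^{-1}$ on $\mathbb C_+$, upgraded to convergence in the graph norm of $G$, and then, via $|I_+(f)|^2\le 4\pi\|Gf\|_{L^2}\|f\|_{L^2}$, pointwise convergence $\Pi u^\varepsilon(t,z)\to\frac1{2i\pi}I_+\big((G+2tT_{u_0}-z\mathrm{Id})^{-1}\Pi u_0\big)$ on $\mathbb C_+$. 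Because $\|u^\varepsilon(t)\|_{L^2}=\|u_0\|_{L^2}$, the family $\{\Pi u^\varepsilon(t)\}$ is bounded in $L_+^2(\mathbb R)$, so by the Cauchy representation (\ref{1.10}) every weak-$L^2$ limit point equals that pointwise limit; as $u^\varepsilon(t)$ is real, $u^\varepsilon(t)=\Pi u^\varepsilon(t)+\overline{\Pi u^\varepsilon}(t)$ converges weakly in $L^2(\mathbb R)$ to $\Pi ZD[u_0](t)+\overline{\Pi ZD[u_0]}(t)$ with $\Pi ZD[u_0](t,z)=\frac1{2i\pi}I_+\big((G+2tT_{u_0}-z\mathrm{Id})^{-1}\Pi u_0\big)$, i.e.\ the first equality of (\ref{1.41}).

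For the second equality of (\ref{1.41}) I would mimic the direct proof of (\ref{1.4}): expand
\begin{align*}
(G+2tT_{u_0}-z\mathrm{Id})^{-1}\Pi u_0=\sum_{n\ge 0}(-2t)^n\big((G-z\mathrm{Id})^{-1}T_{u_0}\big)^n(G-z\mathrm{Id})^{-1}\Pi u_0,
\end{align*}
apply $\frac1{2i\pi}I_+$ using (\ref{1.30}) and the resolvent identity (\ref{1.20}), recognise the resulting $n$-fold integrals as the convolutions of Lemma \ref{lemma 1.7}, collapse them by that lemma (the factor $n+1$ there producing the coefficients $\tfrac1{n+1}$), and resum to
\begin{align*}
\frac1{4i\pi t}\int_{\mathbb R}\sum_{n\ge 1}\frac{(-1)^{n-1}}{n}\frac{\big(2tu_0(y)\big)^n}{(y-z)^n}\,dy=\frac1{4i\pi t}\int_{\mathbb R}\mathrm{Log}\Big(1+\frac{2tu_0(y)}{y-z}\Big)\,dy.
\end{align*}
The convergence of the Neumann series and the legitimacy of interchanging sum and integral is again exactly what $|t|<\frac1{2C}$ buys at infinity, where $\big|\tfrac{2tu_0(y)}{y-z}\big|\le\tfrac{2|t|C\langle y\rangle}{|y-z|}\to 2|t|C<1$, while on compact sets I would control the argument as in Theorem \ref{theorem 1.3} using $u_0\in L^2(\mathbb R)\cap L_{loc}^\infty(\mathbb R)$ and boundedness of the integrand. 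Finally $1+\tfrac{2tu_0(y)}{y-z}$ never meets $(-\infty,0]$, since its imaginary part $\tfrac{2tu_0(y)\operatorname{Im}z}{|y-z|^2}$ vanishes only where $u_0(y)=0$, where the value equals $1$, so $\mathrm{Log}$ is unambiguous and (\ref{1.41}) follows. Everything beyond the first step reproduces the computations of Theorem \ref{theorem 1.3} essentially verbatim, so the genuine work is the operator-theoretic construction together with the sharp time constraint.
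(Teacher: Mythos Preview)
Your overall strategy matches the paper's: apply Kato--Rellich to $-iG$ with perturbation $-2itT_{u_0}$, obtain relative bound $2|t|C<1$, and then rerun the machinery of Theorem~\ref{theorem 1.3} and Lemma~\ref{lemma 3.3}. However, your opening operator-theoretic step contains a genuine misconception that, if uncorrected, would send you down an unnecessary and harder path.

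You assert that the natural domain $\{f\in\operatorname{Dom}(G): u_0 f\in L^2(\mathbb R)\}$ is \emph{strictly} contained in $\operatorname{Dom}(G)$ because elements with $I_+(f)\neq 0$ carry a $1/x$ tail and $u_0$ may grow like $\langle x\rangle$. This is false: you have forgotten that $u_0\in L^2(\mathbb R)$. The paper's decomposition (exactly the one from Lemma~\ref{lemma 3.01}) writes $f=(f-I_+(f)g)+I_+(f)g$ with $\hat g(\xi)=\mathbf 1_{\xi\ge 0}e^{-\xi}$. The first piece satisfies $x(f-I_+(f)g)\in L^2$, so $|u_0|\le C\langle x\rangle$ controls it; the second piece has $g\in L^\infty$, so $u_0\cdot I_+(f)g\in L^2$ simply because $u_0\in L^2$. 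Hence $u_0 f\in L^2$ for \emph{every} $f\in\operatorname{Dom}(G)$, the domain of $G+2tT_{u_0}$ is exactly $\operatorname{Dom}(G)$ (as the statement of the Corollary already announces), and the estimate you need is precisely
\[
\|T_{u_0}f\|_{L^2}\le a\|Gf\|_{L^2}+b\|f\|_{L^2}
\]
with $a$ arbitrarily close to $C$. Then $2|t|\cdot a<1$ for $|t|<\tfrac1{2C}$ gives Kato--Rellich directly, with no need to study a smaller domain or prove closedness separately. This is the paper's argument verbatim; your anticipated ``main obstacle'' dissolves once you use $u_0\in L^2$ on the $g$-piece.

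Beyond this, your plan for the second equality of~(\ref{1.41}) via the Neumann expansion and Lemma~\ref{lemma 1.7} is exactly Lemma~\ref{lemma 3.3}; note that, as in the paper, you should establish it first for $z\in i\mathbb R_{>0}$ with $\operatorname{Im}z$ large (where both the operator Neumann series and the scalar log series converge, the latter because $\sup_y\langle y\rangle/|y-z|\le 1$ for such $z$) and then extend by analytic continuation, rather than arguing convergence for all $z$ at once. For the zero-dispersion convergence the paper does not invoke strong resolvent convergence on a common core (the domains $\operatorname{Dom}(A_{\varepsilon t})$ vary with $\varepsilon$), but instead uses the a priori bound of Lemma~\ref{lemma 3.001} together with the $\psi(\sqrt\varepsilon D)$ smoothing and commutator manipulations; your sketch here is thinner than the paper's and would need the same care.
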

\begin{remark}
\label{remark 1.70}
Even for $u_0 \in L_r^2(\mathbb{R})$, we know that $\frac{2tu_0}{y-z} \notin \mathbb{R}$ for all $z \in \mathbb{C}_{+}$ and for all $t \in \mathbb{R}$, so ${\rm Log }\left(1 + \frac{2t u_0(y)}{y-z}\right)$ is well defined in $\mathbb{C}_{+}$. We also notice that
\begin{align*}
\frac{1}{4i\pi t}\int_{\mathbb{R}}{\rm Log }\left(1 + \frac{2t u_0(y)}{y-z}\right) dy=\frac{1}{2i\pi}\int_{\mathbb{R}}\int_{0}^1 \frac{u_0(y)}{y-z+2stu_0(y)}dsdy,
\end{align*}
since 
\begin{align*}
\frac{1}{y-z+2stu_0(y)} \in L_{s}^{\infty}(0,1)L_y^2(\mathbb{R})\cap  L_{s}^{\infty}(0,1)L_y^{\infty}(\mathbb{R}),
\end{align*}
we can deduce that $\frac{1}{4i\pi t}\int_{\mathbb{R}}{\rm Log }\left(1 + \frac{2t u_0(y)}{y-z}\right) dy$ is well defined and holomorphic in $\mathbb{C}_{+}$. This tells us the formula for the zero dispersion limit
\begin{align*}
\Pi Z D\left[u_0\right](t, z) = \frac{1}{4 i \pi t} \int_{\mathbb{R}}{\rm Log } \left(1+\frac{2 t u_0(y)}{y-z}\right) d y
\end{align*}
might be extended to $u_0 \in L_r^2(\mathbb{R})$ for every $t \in \mathbb{R}$, but the difficulty lies in solving the problem of switching the order of a double limit, see also Section \ref{section 4} for details.
\end{remark}
\subsection{Maximally dissipative operators and the Kato-Rellich theorem}
In this paper, we mainly apply the Kato-Rellich theorem to show that operators remain maximally dissipative after some perturbations. To present the Kato-Rellich theorem for maximally dissipative operators, we first need to introduce the following definition of the dissipative and maximally dissipative operators in Hilbert spaces.
\begin{definition}
Let $(D(A), A)$ be an operator in a Hilbert space $\mathscr{H}$.\\\\
1. We say that $A$ is dissipative if for all $g \in D(A)$ and all $\lambda >0$, 
\begin{align*}
\|(\lambda I- A) g\| \geq \lambda \|g\| .
\end{align*}
2. We say that $A$ is maximal dissipative if it is dissipative and for all $h \in \mathscr{H}$ and for all $\lambda > 0$, there exists $g \in D(A)$ such that $(\lambda I - A)g = h$.
\end{definition}
\begin{remark}
\label{remark 1.9}
In fact, an operator $(D(A), A)$ in a Hilbert space $\mathscr{H}$ is dissipative if and only if for all $g \in D(A), \, \Re \langle Ag, g\rangle \leq 0$.
\end{remark}
\begin{remark}
Let $(D(A), A)$ be a maximally dissipative operator in a Hilbert space $\mathscr{H}$. From the definition of maximally dissipative operators, we can deduce that, for all $\lambda >0$, we have
\begin{equation}
\label{1.90}
\|(\lambda I -A)^{-1}\|_{\mathscr{L}(\mathscr{H})} \leq \frac{1}{\lambda}, \qquad \|A(\lambda I -A)^{-1}\|_{\mathscr{L}(\mathscr{H})} \leq 1.
\end{equation}
\end{remark}
Since the Kato-Rellich theorem involves the related notion of the perturbation of operators, we give the following definition of the relative bound of an operator with respect to another operator (see also the definition in \cite{11}). 
\begin{definition}
Let $\left(D(A), A\right)$ and $\left(D(B), B\right)$ be densely defined linear operators on a Hilbert space $\mathscr{H}$. Suppose that:\\\\
(i) $D(A) \subset D(B)$; \\
(ii) For some $a$ and $b$ in $\mathbb{R}$ and all $\varphi \in D(A)$,
\begin{align*}
\|B \varphi\| \leq a\|A \varphi\|+b\|\varphi\|.
\end{align*}
Then $B$ is said to be $A$-bounded. The infimum of such $a$ is called the relative bound of $B$ with respect to $A$. If the relative bound is $0$, we say that $B$ is infinitesimally small with respect to $A$.
\end{definition}
Then we state the Kato-Rellich theorem for maximally dissipative operators.
\begin{theorem}
[Kato-Rellich theorem]
\label{theorem 1.5}
Let $\left(D(A), A\right)$ be a maximally dissipative operator which is densely defined on a Hilbert space $\mathscr{H}$ and assume $\left(D(B), B\right)$ to be dissipative and $A$-bounded with the relative bound smaller than 1. Then $\left(D(A), A+B\right)$ is also a maximally dissipative operator.
\end{theorem}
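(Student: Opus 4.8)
The plan is to transcribe the classical Kato--Rellich argument: I would reduce the maximal dissipativity of $A+B$ (taken with domain $D(A)$, which is legitimate since $D(A)\subset D(B)$) to a Neumann series for the bounded operator $B(\lambda I-A)^{-1}$, with $(\lambda I-A)^{-1}$ playing the role of the resolvent at $\lambda>0$.

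First I would check that $(D(A),A+B)$ is dissipative: for $\varphi\in D(A)$ one has $\Re\langle(A+B)\varphi,\varphi\rangle=\Re\langle A\varphi,\varphi\rangle+\Re\langle B\varphi,\varphi\rangle\le 0$ because $A$ and $B$ are dissipative, so dissipativity follows from Remark \ref{remark 1.9}. It then remains to show that $\lambda I-(A+B):D(A)\to\mathscr{H}$ is onto for every $\lambda>0$. For this, the key estimate is that, for $h\in\mathscr{H}$, the element $(\lambda I-A)^{-1}h$ lies in $D(A)\subset D(B)$, so the $A$-boundedness of $B$ together with the two inequalities in (\ref{1.90}) gives
\[
\|B(\lambda I-A)^{-1}h\|\le a\,\|A(\lambda I-A)^{-1}h\|+b\,\|(\lambda I-A)^{-1}h\|\le\Bigl(a+\tfrac{b}{\lambda}\Bigr)\|h\|,
\]
i.e. $\|B(\lambda I-A)^{-1}\|_{\mathscr{L}(\mathscr{H})}\le a+b/\lambda$. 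Since $a<1$, this quantity is $<1$ for all $\lambda$ beyond some threshold $\lambda_0$, so $I-B(\lambda I-A)^{-1}$ is then boundedly invertible on $\mathscr{H}$; combining this with the identity (valid on $D(A)$)
\[
\lambda I-(A+B)=\bigl(I-B(\lambda I-A)^{-1}\bigr)\,(\lambda I-A)
\]
and the bijectivity of $\lambda I-A:D(A)\to\mathscr{H}$ (which holds because $A$ is maximally dissipative) shows that $\lambda I-(A+B):D(A)\to\mathscr{H}$ is bijective for all $\lambda>\lambda_0$.

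It remains to upgrade this to every $\lambda>0$. Given $\mu>0$, I would pick $\lambda>\max(\lambda_0,\mu/2)$; then $\lambda I-(A+B)$ is bijective by the previous step, dissipativity gives $\|(\lambda I-(A+B))^{-1}\|\le 1/\lambda$, and the perturbation expansion
\[
\mu I-(A+B)=\bigl(I+(\mu-\lambda)(\lambda I-(A+B))^{-1}\bigr)\,(\lambda I-(A+B))
\]
has boundedly invertible first factor since $|\mu-\lambda|/\lambda<1$; hence $\mu I-(A+B):D(A)\to\mathscr{H}$ is onto. As $\mu>0$ was arbitrary, together with the dissipativity from the first step this proves that $(D(A),A+B)$ is maximally dissipative.

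I do not expect a genuine obstacle here: the whole argument is a faithful adaptation of the self-adjoint Kato--Rellich proof. The only point that needs a little care is that the factorization $\lambda I-(A+B)=(I-B(\lambda I-A)^{-1})(\lambda I-A)$ only yields surjectivity for $\lambda$ large, so the short perturbation-series continuation of the last paragraph is needed to cover the full range $\lambda>0$ demanded by the definition of maximal dissipativity; this, and the fact that the hypothesis $a<1$ (rather than merely $a\le 1$) is exactly what makes the relevant Neumann series converge, are the two things to keep track of.
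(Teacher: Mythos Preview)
Your argument is correct and follows essentially the same route as the paper: the paper's own proof (reproduced in the proof of Corollary~\ref{corollary 2.2}) establishes dissipativity of the sum and then uses the factorization $\lambda I-(A+B)=(I-B(\lambda I-A)^{-1})(\lambda I-A)$ together with the bound from (\ref{1.90}) to invert the first factor for $\lambda$ large. The only notable difference is that the paper stops after obtaining surjectivity for \emph{one} $\lambda>0$ and tacitly invokes the standard fact (Lumer--Phillips) that for a densely defined dissipative operator this already implies surjectivity for all $\lambda>0$, whereas you supply an explicit continuation argument to reach every $\lambda>0$; your extra paragraph is not strictly necessary but makes the proof self-contained with respect to the paper's definition of maximal dissipativity.
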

We refer to \cite[Theorem X.12]{11} for the proof of the Kato-Rellich theorem for self-adjoint operators. The readers can also see the proof of Corollary \ref{corollary 2.2}.
\subsection{Structure of the paper}
In Section \ref{section 2}, for $u_0 \in L_r^2(\mathbb{R})$, we apply the Kato-Rellich theorem \ref{theorem 1.5} to show that $\left(G-2 t L_{u_0}-z \mathrm{Id}\right)^{-1}$  is well-defined on $L_{+}^2(\mathbb{R})$ for every $z \in \mathbb{C}_{+}$, then we can extend the explicit formula (\ref{0.2}) to $u_0 \in L_r^2(\mathbb{R})$ and prove Theorem \ref{theorem 1.2}.\\\\
In Section \ref{section 3}, for  $u_0 \in L_r^2(\mathbb{R}) \cap L_{loc}^{\infty} (\mathbb{R})$ with $\lim_{x \to \infty} \frac{|u_0(x)|}{|x|} = 0$, we can still apply the Kato-Rellich theorem \ref{theorem 1.5} to show that $\left(G+2 t T_{u_0}-z \mathrm{Id}\right)^{-1}$  is well-defined on $L_{+}^2(\mathbb{R})$ for every $z \in \mathbb{C}_{+}$. Also, we prove Lemma \ref{lemma 1.7} and then adapt the equality (\ref{3.006}) to prove the second equality of (\ref{1.4}). Finally, we show that the zero dispersion limit exists and complete the proof of Theorem \ref{theorem 1.3}.\\\\
In Section \ref{section 4}, we discuss the difficulties in further extensions of the explicit formula (\ref{0.2}) and of the formula (\ref{1.4}) for the zero dispersion limit. We also introduce briefly the results and the open problem on the zero dispersion limit for the Benjamin–Ono equation on the torus. 
\section{Proof of the extension of the explicit formula}
\label{section 2}
In this section, we will show why the formula (\ref{0.2}) can be extended to the initial data in $L_r^2(\mathbb{R})$. In fact, P. Gérard proved directly the formula (\ref{0.2}) for $u_0 \in H_r^2(\mathbb{R})$ in \cite{1}, and then he extended this formula to $u_0 \in L_{r}^{2}(\mathbb{R}) \cap L^{\infty}(\mathbb{R})$.  Let us firstly recall the sketch of proof of the generalized formula for $u_0 \in L_{r}^{2}(\mathbb{R}) \cap L^{\infty}(\mathbb{R})$. We consider the following operator
\begin{align*}
A_{t}:=-iG + 2 i t D, \text{ with }  \operatorname{Dom}\left(A_{t}\right) :=\left\{f \in L_{+}^{2}(\mathbb{R}): \mathrm{e}^{i t \xi^{2}} \hat{f} \in H^{1}(0, \infty)\right\}.
\end{align*}
In fact, we observe that
\begin{align*}
A_t = -iG + 2 i t D = \mathrm{e}^{-it D^2} (-iG )\mathrm{e}^{it D^2},
\end{align*}
so we can easily deduce that $\left( \operatorname{Dom}\left(A_{t}\right), A_{t}\right)$ is maximally disspative. Then, for $u_0 \in L_{r}^{2}(\mathbb{R}) \cap L^{\infty}(\mathbb{R})$, we know that $\left(L_{+}^2(\mathbb{R}), T_{u_0}\right)$ is a bounded and self-adjoint operator, so by a classical perturbation theory, we can deduce that $A_{t}-2 i t T_{u_{0}} = -iG+2 it L_{u_{0}}$ is also maximally dissipative, and then by approximation, we conclude that (\ref{0.2}) holds for $u_0 \in L_{r}^{2}(\mathbb{R}) \cap L^{\infty}(\mathbb{R})$. \\\\
However, for $u_0 \in L_{r}^{2}(\mathbb{R})$, we cannot expect that $T_{u_0}$ to remain bounded and dissipative on $L_{+}^2(\mathbb{R})$, so we cannot adapt directly the argument in \cite{1} in this case. \\\\
Fortunately, we can adapt another approach to verify the formula (\ref{0.2}) for $u_0 \in L_{r}^{2}(\mathbb{R})$. In fact, we observe that for $f \in \operatorname{Dom}\left(A_{t}\right)$, we have
\begin{align*}
-iG f + 2it L_{u_0} f = A_{t} f-2 i t T_{u_{0}} f = \mathrm{e}^{-it D^2} \left( -iG - 2it \mathrm{e}^{it D^2} T_{u_{0}} \mathrm{e}^{-it D^2}\right) \mathrm{e}^{it D^2} f.
\end{align*}
Then we consider the operator 
\begin{align*}
\mathcal{G}_{t} : = -i G - 2it \mathrm{e}^{it D^2} T_{u_{0}} \mathrm{e}^{-it D^2}
\end{align*}
with
\begin{align*}
\operatorname{Dom}\left(\mathcal{G}_{t}\right) = \operatorname{Dom}\left(G\right) : = \left\{f \in L_{+}^{2}(\mathbb{R}):  \hat{f} \in H^{1}(0, \infty)\right\}.
\end{align*}
We recall that $\left(\operatorname{Dom}\left(G\right), -i G\right)$ is maximally dissipative. Now we are going to prove that $B_{u_0}^t : = -2it\mathrm{e}^{it D^2} T_{u_{0}} \mathrm{e}^{-it D^2}$ with the domain $\operatorname{Dom}\left(G\right)$ is dissipative and is infinitesimally small with respect to $G$, and then we can apply Theorem \ref{theorem 1.5} to show that $\left(\operatorname{Dom}\left(G\right), \mathcal{G}_{t}\right)$ is maximally dissipative, and so is $\left(\operatorname{Dom}\left(A_t\right), -iG + 2it L_{u_0}\right)$.
\begin{lemma}
\label{lemma 3.1}
Given $u_0 \in L_r^2(\mathbb{R})$, for any $t \in \mathbb{R}$, the operator $B_{u_0}^t := -2it\mathrm{e}^{it D^2} T_{u_{0}} \mathrm{e}^{-it D^2}$ with the domain $\operatorname{Dom}\left(G\right)$ is dissipative and is infinitesimally small with respect to $G$.
\end{lemma}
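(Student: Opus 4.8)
The plan is to handle the two assertions in turn. The dissipativity will be almost immediate once one knows that $B_{u_0}^t$ maps $\operatorname{Dom}(G)$ into $L_+^2(\mathbb{R})$, while the relative smallness is the substantial point, and it is what makes $B_{u_0}^t$ well defined on $\operatorname{Dom}(G)$. Since $e^{\pm i t D^2}$ are unitary on $L_+^2(\mathbb{R})$ and $\Pi$ is an orthogonal projection, for $f \in \operatorname{Dom}(G)$ and $g := e^{-i t D^2} f$ we have
\[
\|B_{u_0}^t f\|_{L^2} = 2|t|\,\|T_{u_0} g\|_{L^2} = 2|t|\,\|\Pi(u_0 g)\|_{L^2} \le 2|t|\,\|u_0 g\|_{L^2},
\]
so everything reduces to the inequality $\|u_0 g\|_{L^2} \le \varepsilon \|G f\|_{L^2} + C_\varepsilon \|f\|_{L^2}$ for each $\varepsilon > 0$ (constants allowed to depend on $t$; the case $t = 0$ is trivial since then $B_{u_0}^0 = 0$). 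I would prove this first for $f$ in the core $\mathcal{C} := \{ f \in L_+^2(\mathbb{R}) : \hat f \in C_c^\infty([0,\infty)) \}$ (smooth, compactly supported, no condition at the origin), which is a core for $G$: through the Fourier transform $\operatorname{Dom}(G)$ with its graph norm is, up to a constant, isometric to $H^1(0,\infty)$, and $\mathcal{C}$ corresponds to a dense subspace. For $f \in \mathcal{C}$ the function $g$ is bounded and smooth, so $u_0 g \in L^2$ and the integrations by parts below are licit; the passage to a general $f \in \operatorname{Dom}(G)$ is then a matter of approximating in the graph norm and extracting an almost everywhere convergent subsequence, which at the same time shows $u_0 e^{-i t D^2} f \in L^2$.

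The core estimate is the pointwise bound $\|e^{-i t D^2} f\|_{L^\infty} \le C_t \big( \|G f\|_{L^2} + \|f\|_{L^2} \big)$ for $f \in \mathcal{C}$. On the Fourier side $g(x) = \frac{1}{2\pi} \int_0^\infty e^{i x \xi - i t \xi^2} \hat f(\xi)\, d\xi$, and completing the square in $\xi$ gives $g(x) = \frac{e^{i x^2/(4t)}}{2\pi} \int_0^\infty e^{-i t (\xi - x/(2t))^2} \hat f(\xi)\, d\xi$, so it suffices to bound $\sup_{\eta \in \mathbb{R}} \big| \int_0^\infty e^{-i t(\xi - \eta)^2} \hat f(\xi)\, d\xi \big|$. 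On the part where $\xi > 0$ and $|\xi - \eta| \le 1$, which contains the only stationary point $\xi = \eta$ of the quadratic phase, Cauchy--Schwarz bounds the integral by $\sqrt{2}\, \|\hat f\|_{L^2(0,\infty)} \lesssim \|f\|_{L^2}$. On the part where $\xi > 0$ and $|\xi - \eta| > 1$, one integrates by parts once using $e^{-i t(\xi - \eta)^2} = \frac{i}{2t(\xi - \eta)} \partial_\xi e^{-i t(\xi - \eta)^2}$: the boundary terms are each at most $\frac{1}{2|t|} \|\hat f\|_{L^\infty(0,\infty)}$ (the denominator being $\ge 1$ at every finite endpoint, the endpoint at infinity contributing nothing), hence $\le C_t(\|G f\|_{L^2} + \|f\|_{L^2})$ by the Sobolev embedding $H^1(0,\infty) \hookrightarrow L^\infty$, and the remaining integral is at most $\frac{1}{2|t|} \int_{|\xi - \eta| > 1} \big( \frac{|\hat f'(\xi)|}{|\xi - \eta|} + \frac{|\hat f(\xi)|}{|\xi - \eta|^2} \big) d\xi$, which Cauchy--Schwarz controls by $C_t(\|\hat f'\|_{L^2} + \|\hat f\|_{L^2}) \le C_t'(\|G f\|_{L^2} + \|f\|_{L^2})$ since $|\xi - \eta|^{-1} \mathbf{1}_{|\xi - \eta| > 1} \in L^2(\mathbb{R})$.

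Granting this $L^\infty$ bound, I finish the relative smallness by the usual truncation of the potential: for $M > 0$ write $u_0 = v_M + w_M$ with $v_M := u_0 \mathbf{1}_{\{|u_0| \le M\}}$ and $w_M := u_0 \mathbf{1}_{\{|u_0| > M\}}$, so $\|v_M\|_{L^\infty} \le M$ and $\|w_M\|_{L^2} \to 0$ as $M \to \infty$ by dominated convergence (here $u_0 \in L^2$ is used). Then
\[
\|u_0 g\|_{L^2} \le M \|g\|_{L^2} + \|w_M\|_{L^2} \|g\|_{L^\infty} \le M \|f\|_{L^2} + C_t \|w_M\|_{L^2} \big( \|G f\|_{L^2} + \|f\|_{L^2} \big),
\]
and choosing $M = M(\varepsilon)$ with $2|t| C_t \|w_M\|_{L^2} < \varepsilon$ yields $\|B_{u_0}^t f\|_{L^2} \le \varepsilon \|G f\|_{L^2} + C_\varepsilon \|f\|_{L^2}$, first on $\mathcal{C}$ and then, by density, on all of $\operatorname{Dom}(G)$; as $\varepsilon > 0$ is arbitrary, $B_{u_0}^t$ is infinitesimally small with respect to $G$. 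For the dissipativity, on $\mathcal{C}$ one has $\langle B_{u_0}^t f \mid f \rangle = -2 i t \langle T_{u_0} g \mid g \rangle = -2 i t \langle u_0 g \mid g \rangle$, which is purely imaginary because $u_0$ is real valued; hence $\Re \langle B_{u_0}^t f \mid f \rangle = 0$, and this persists for all $f \in \operatorname{Dom}(G)$ by continuity for the graph norm, so $B_{u_0}^t$ is dissipative by Remark \ref{remark 1.9}.

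The step I expect to be the main obstacle is the $L^\infty$ bound of the second paragraph. It is not obvious a priori, because $\operatorname{Dom}(G)$ is not contained in $L^1(\mathbb{R})$ and $e^{-i t D^2}$ has no smoothing effect at a fixed time, so $e^{-i t D^2} f$ need not even be bounded for a general $f \in L_+^2(\mathbb{R})$. The mechanism is that completing the square turns the problem into the estimate of a genuinely oscillatory integral, in which the $L^2$ control of $\hat f'$ --- equivalently, of $G f$ --- supplies exactly the decay one needs away from the single stationary point of the phase, the stationary region being disposed of by a harmless application of Cauchy--Schwarz on an interval of length two. The remainder of the argument is routine truncation and a density step.
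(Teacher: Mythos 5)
Your proof is correct, and it reaches the two key estimates by a genuinely different route from the paper's, in both of its substantive steps. For the $L^\infty$ bound on $\mathrm{e}^{-itD^2}f$, the paper does not estimate the oscillatory integral directly: it subtracts $I_{+}(f)g$, with $\widehat{g}(\xi)=\mathbf{1}_{\xi\ge 0}\mathrm{e}^{-\xi}$, so that the Fourier transform of the remainder is continuous at $\xi=0$; this makes $x\bigl(f-I_{+}(f)g\bigr)=G\bigl(f-I_{+}(f)g\bigr)$, hence $f-I_{+}(f)g\in L^1$ by Cauchy--Schwarz, and the standard dispersive estimate $\|\mathrm{e}^{-itD^2}h\|_{L^\infty}\le C|t|^{-1/2}\|h\|_{L^1}$ applies, the subtracted piece being handled by Hausdorff--Young. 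The payoff of that route is that the resulting bound is a sum of terms $\|Gf\|_{L^2}^{\theta}\|f\|_{L^2}^{1-\theta}$ with $\theta\in\{1/4,1/2,3/4\}$ all strictly less than $1$, so infinitesimal smallness follows at once from $\|B_{u_0}^t f\|_{L^2}\le 2|t|\,\|u_0\|_{L^2}\|\mathrm{e}^{-itD^2}f\|_{L^\infty}$ and Young's inequality for products, with no truncation of $u_0$. Your stationary-phase argument instead produces a bound \emph{linear} in $\|Gf\|_{L^2}+\|f\|_{L^2}$, which by itself only yields a finite relative bound $2|t|C_t\|u_0\|_{L^2}$; you correctly compensate by splitting $u_0$ into a bounded part (absorbed into $\|f\|_{L^2}$ by unitarity) and an $L^2$-small tail paired with the $L^\infty$ bound. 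Both mechanisms are sound: the paper exploits sublinearity in $\|Gf\|_{L^2}$, you exploit the smallness of the tail of an $L^2$ potential; your version is arguably more robust, the paper's avoids the core-and-density step (its subtraction trick works directly for any $f\in\operatorname{Dom}(G)$) and gives the explicit $|t|^{1/2}$ scaling. The dissipativity argument is the same in both.
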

\begin{proof}
 Firstly, we want to show that $B_{u_0}^t$ is well-defined on $\operatorname{Dom}(G)$ and $\left(\operatorname{Dom}\left(G\right), B_{u_0}^t\right) $ is dissipative. In fact, if we can show that 
\begin{equation}
\label{3.1}
\forall f \in \operatorname{Dom}\left(G\right) \text{ and }\forall 0<t<\infty, \quad t^{\frac{1}{2}} \mathrm{e}^{-it D^2} f \in L^{\infty}(\mathbb{R}),
\end{equation}
since $u_0 \in L_r^2(\mathbb{R})$, we can infer that $B_{u_0}^t$ is well-defined on $\operatorname{Dom}(G)$. Also, from (\ref{3.1}), 
we can infer that for all $ f \in \operatorname{Dom}\left(G\right)$ and for all $0<t<\infty$, we have
\begin{align*}
\Re \left\langle -2it \mathrm{e}^{it D^2} T_{u_0} \mathrm{e}^{-it D^2}f,  f\right\rangle & =  2 \Im \left\langle T_{u_0} t^{\frac{1}{2}}\mathrm{e}^{-it D^2} f, t^{\frac{1}{2}}\mathrm{e}^{-it D^2} f\right\rangle \\ & = 2\Im \left\langle  t^{\frac{1}{2}}\mathrm{e}^{-it D^2} f, T_{u_0} t^{\frac{1}{2}}\mathrm{e}^{-it D^2} f\right\rangle \\ & = - \Re \left\langle  \mathrm{e}^{-it D^2} f, - 2it T_{u_0} \mathrm{e}^{-it D^2} f\right\rangle,
\end{align*}
which implies that
\begin{align*}
\forall f \in \operatorname{Dom}\left(G\right) \text{ and }\forall 0<t<\infty, \quad \Re \left\langle B_{u_0}^t f, f\right\rangle = 0.
\end{align*}
Then from Remark \ref{remark 1.9}, we can deduce that $\left(\operatorname{Dom}\left(G\right), B_{u_0}^t\right) $ is dissipative. So the point is to prove (\ref{3.1}).\\\\
Before proving (\ref{3.1}), we define a function $g \in L_{+}^2(\mathbb{R})$ by
\begin{align*}
\widehat{g}(\xi) : = \mathbf{1}_{\xi \geq 0} \, \mathrm{e}^{-\xi}
\end{align*}
with
\begin{align*}
I_{+} (g) = 1.
\end{align*}
We recall that
\begin{align*}
|I_{+}(f)|^2 = -4\pi\operatorname{Im}\left\langle G f \mid f\right\rangle \leq 4\pi \|Gf\|_{L^2} \|f\|_{L^2}.
\end{align*}
Then we have 
\begin{align*}
\left\|t^{\frac{1}{2}} \mathrm{e}^{-it D^2} f \right\|_{L^{\infty}} & \leq |t|^{\frac{1}{2}}\left\| \mathrm{e}^{-it D^2}\left(f-I_{+}(f)g\right) \right\|_{L^{\infty}} + |t|^{\frac{1}{2}}\left\|\mathrm{e}^{-it D^2} \left(I_{+} (f) g\right) \right\|_{L^{\infty}} \\ & : = I_1 + I_2.
\end{align*}
By the Hausdorff–Young inequality, we have the following estimate for $I_2$,
\begin{align*}
I_2 \leq |t|^{\frac{1}{2}} |I_{+}(f)| \|\widehat{g}\|_{L_{\xi}^1} \leq  C |t|^{\frac{1}{2}} \|Gf\|_{L^2}^{\frac{1}{2}} \|f\|_{L^2}^{\frac{1}{2}}.
\end{align*}
For $I_1$, from the dispersive estimate, we have
\begin{align*}
I_1 & \leq |t|^{\frac{1}{2}} \left\|\mathrm{e}^{-it D^2}\left(f-I_{+}(f)g\right)\right\|_{L^{\infty}} \\ & \leq C \left\|f-I_{+}(f)g\right\|_{L^1}\\ & \leq C \left\|f-I_{+}(f)g\right\|_{L^2}^{\frac{1}{2}}\left\|x\left(f-I_{+}(f)g\right)\right\|_{L^2}^{\frac{1}{2}} \\ & \leq C \left\|f-I_{+}(f)g\right\|_{L^2}^{\frac{1}{2}}\left\|G\left(f-I_{+}(f)g\right)\right\|_{L^2}^{\frac{1}{2}}.
\end{align*}
Here, $x\left(f-I_{+}(f)g\right) = G\left(f-I_{+}(f)g\right)$ since we have $\mathbf{1}_{\xi\geq 0}\left(\widehat{f}(\xi) - I_{+}(f)\widehat{g}(\xi)\right)$ is continuous at $\xi = 0$.\\\\
Then we have
\begin{align*}
\left\|f-I_{+}(f)g\right\|_{L^2} &\leq \left\|f\right\|_{L^2} + |I_{+}(f)|\|g\|_{L^2} \\ & \leq \left\|f\right\|_{L^2} + C \|Gf\|_{L^2}^{\frac{1}{2}} \|f\|_{L^2}^{\frac{1}{2}}
\end{align*}
and
\begin{align*}
\left\|G\left(f-I_{+}(f)g\right)\right\|_{L^2} &\leq \left\|G f\right\|_{L^2} + |I_{+}(f)|\|G g\|_{L^2} \\ & \leq \left\|Gf\right\|_{L^2} + C \|Gf\|_{L^2}^{\frac{1}{2}} \|f\|_{L^2}^{\frac{1}{2}}.
\end{align*}
From the above estimates for $I_1$ and $I_2$, we can deduce that
\begin{equation}
\label{3.2}
\begin{aligned}
\left\|t^{\frac{1}{2}} \mathrm{e}^{-it D^2} f \right\|_{L^{\infty}} & \leq C |t|^{\frac{1}{2}} \|Gf\|_{L^2}^{\frac{1}{2}} \|f\|_{L^2}^{\frac{1}{2}}+ C \|Gf\|_{L^2}^{\frac{1}{4}} \|f\|_{L^2}^{\frac{3}{4}}\\ & + C \|Gf\|_{L^2}^{\frac{3}{4}} \|f\|_{L^2}^{\frac{1}{4}},
\end{aligned}
\end{equation}
which verifies (\ref{3.1}).\\\\
Then we prove that $B_{u_0}^t$ is infinitesimally small with respect to $G$. It is equivalent to show the following argument: Let $0<t< \infty$ (fixed) and $u_0 \in L_r^2(\mathbb{R})$, for any $\varepsilon > 0$, we have
\begin{equation}
\label{3.3}
 \|B_{u_0}^t f \|_{L^2} \leq \varepsilon \|Gf\|_{L^2} + C_{\varepsilon} \|f\|_{L^2}, \quad \forall f \in \operatorname{Dom}\left(G\right).
\end{equation}
In fact, we can combine (\ref{3.2}) with the Young's inequality for products and then we can deduce (\ref{3.3}). The proof of Lemma \ref{lemma 3.1} is complete.
\end{proof}
With Lemma \ref{lemma 3.1}, we can now adapt directly Theorem \ref{theorem 1.5} to show that $\left(\operatorname{Dom}\left(G\right), \mathcal{G}_{t}\right)$ is maximally dissipative, and so is $\left(\operatorname{Dom}\left(A_t\right), -iG + 2it L_{u_0}\right)$. For the readers' convenience, we reproduce the proof of Theorem \ref{theorem 1.5} in the proof of Corollary \ref{corollary 2.2}.
\begin{corollary}
\label{corollary 2.2}
Let $u_0 \in L_r^2(\mathbb{R})$, for any $t \in \mathbb{R}$, $\left(\operatorname{Dom}\left(G\right), \mathcal{G}_{t}\right)$ and $\left(\operatorname{Dom}\left(A_t\right), -iG + 2it L_{u_0}\right)$ are maximally dissipative. 
\end{corollary}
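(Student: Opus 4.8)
The plan is to invoke the Kato–Rellich theorem (Theorem \ref{theorem 1.5}) with $A = -iG$ and $B = B_{u_0}^t$, and then transport the conclusion through the unitary conjugation relating $\mathcal{G}_t$ to $-iG + 2itL_{u_0}$. We already know that $\left(\operatorname{Dom}(G), -iG\right)$ is maximally dissipative, and Lemma \ref{lemma 3.1} tells us that $B_{u_0}^t$ is dissipative on $\operatorname{Dom}(G)$ and infinitesimally small with respect to $G$ — in particular $G$-bounded with relative bound $0 < 1$. So Theorem \ref{theorem 1.5} directly yields that $\mathcal{G}_t = -iG + B_{u_0}^t$ is maximally dissipative on $\operatorname{Dom}(G)$; since we announced that the proof of Theorem \ref{theorem 1.5} would be reproduced here, I would write the argument out in this concrete situation.

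First I would verify that $\mathcal{G}_t$ is dissipative: for $f \in \operatorname{Dom}(G)$,
\[
\Re\left\langle \mathcal{G}_t f, f\right\rangle = \Re\left\langle -iGf, f\right\rangle + \Re\left\langle B_{u_0}^t f, f\right\rangle = \operatorname{Im}\left\langle Gf, f\right\rangle + 0 = -\frac{|I_+(f)|^2}{4\pi} \leq 0,
\]
where the second term vanishes by Lemma \ref{lemma 3.1}; hence, by Remark \ref{remark 1.9}, $\mathcal{G}_t$ is dissipative. Next I would establish surjectivity of $\lambda\,\mathrm{Id} - \mathcal{G}_t$: fixing $\varepsilon = \tfrac12$ and letting $C_\varepsilon$ be the constant in the bound $\|B_{u_0}^t f\| \leq \varepsilon\|Gf\| + C_\varepsilon\|f\|$ furnished by Lemma \ref{lemma 3.1} (see (\ref{3.3})), for every $\lambda > 0$ one has the factorization
\[
\lambda\,\mathrm{Id} - \mathcal{G}_t = \left(\mathrm{Id} - B_{u_0}^t(\lambda\,\mathrm{Id} + iG)^{-1}\right)(\lambda\,\mathrm{Id} + iG),
\]
which is legitimate because maximal dissipativity of $-iG$ gives $\|(\lambda\,\mathrm{Id} + iG)^{-1}\|_{\mathscr{L}(L_+^2(\mathbb{R}))} \leq 1/\lambda$ and $\|G(\lambda\,\mathrm{Id} + iG)^{-1}\|_{\mathscr{L}(L_+^2(\mathbb{R}))} \leq 1$ by (\ref{1.90}). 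Hence
\[
\left\|B_{u_0}^t(\lambda\,\mathrm{Id} + iG)^{-1}\right\|_{\mathscr{L}(L_+^2(\mathbb{R}))} \leq \varepsilon + \frac{C_\varepsilon}{\lambda} = \frac{1}{2} + \frac{C_\varepsilon}{\lambda},
\]
which is $< 1$ as soon as $\lambda > 2C_\varepsilon$; for such $\lambda$ the first factor is boundedly invertible via a Neumann series, so $\lambda\,\mathrm{Id} - \mathcal{G}_t$ maps $\operatorname{Dom}(G)$ onto $L_+^2(\mathbb{R})$. To upgrade this to all $\lambda > 0$ I would use the standard fact (the relevant half of the Lumer–Phillips theorem): a densely defined dissipative operator for which $\lambda_0\,\mathrm{Id} - \mathcal{G}_t$ is onto for some $\lambda_0 > 0$ is automatically closed and maximally dissipative — concretely, the set $\left\{\lambda > 0 : \operatorname{Ran}(\lambda\,\mathrm{Id} - \mathcal{G}_t) = L_+^2(\mathbb{R})\right\}$ is open by the Neumann series argument above and closed thanks to the bound $\|(\lambda\,\mathrm{Id} - \mathcal{G}_t)^{-1}\|_{\mathscr{L}(L_+^2(\mathbb{R}))} \leq 1/\lambda$ (dissipativity), hence it is all of $(0,\infty)$; here we use that $\operatorname{Dom}(G)$ is dense in $L_+^2(\mathbb{R})$.

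It then remains to transfer the conclusion to $-iG + 2itL_{u_0}$. From the identity recorded above, $-iG + 2itL_{u_0} = \mathrm{e}^{-itD^2}\,\mathcal{G}_t\,\mathrm{e}^{itD^2}$ on $\operatorname{Dom}(A_t) = \mathrm{e}^{-itD^2}\operatorname{Dom}(G)$, and since $\mathrm{e}^{itD^2}$ is unitary on $L_+^2(\mathbb{R})$ (its Fourier multiplier $\mathrm{e}^{it\xi^2}$ has modulus $1$ and preserves $\{\xi \geq 0\}$), both dissipativity (via Remark \ref{remark 1.9}) and the surjectivity of $\lambda\,\mathrm{Id} - (\,\cdot\,)$ are invariant under this conjugation; hence $\left(\operatorname{Dom}(A_t), -iG + 2itL_{u_0}\right)$ is maximally dissipative as well. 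The case $t = 0$ is trivial, and $t < 0$ is covered by the identical computation since the estimates behind Lemma \ref{lemma 3.1} hold verbatim with $|t|$ in place of $t$. I expect the only genuinely delicate step to be the passage from "surjective for one large $\lambda$" to "maximally dissipative in the sense of the paper's definition", i.e. making the open-and-closed (Lumer–Phillips) argument precise; every other ingredient is a routine combination of Lemma \ref{lemma 3.1}, the resolvent bounds (\ref{1.90}), and a Neumann series.
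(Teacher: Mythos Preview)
Your proof is correct and follows essentially the same route as the paper: verify dissipativity of $\mathcal{G}_t$ from that of $-iG$ and $B_{u_0}^t$, factor $\lambda\,\mathrm{Id} - \mathcal{G}_t = \bigl(\mathrm{Id} - B_{u_0}^t(\lambda\,\mathrm{Id} + iG)^{-1}\bigr)(\lambda\,\mathrm{Id} + iG)$, use the relative bound and (\ref{1.90}) to make the first factor invertible for large $\lambda$, and then transport the conclusion through the unitary conjugation $\mathrm{e}^{-itD^2}(\cdot)\mathrm{e}^{itD^2}$. The only difference is cosmetic (the paper takes $\varepsilon = \tfrac14$ where you take $\tfrac12$) together with the fact that you spell out the Lumer--Phillips/open-and-closed passage from ``surjective for one large $\lambda$'' to ``surjective for all $\lambda>0$'', whereas the paper simply asserts that bijectivity for one $z$ suffices; your added detail is a genuine (if standard) completion of the argument rather than a different approach.
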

\begin{proof}
We recall that
\begin{align*}
-iG + 2it L_{u_0}=A_{t} f-2 i t T_{u_{0}} = \mathrm{e}^{-it D^2} \mathcal{G}_t \mathrm{e}^{it D^2},
\end{align*}
so we only need to show that  $\left(\operatorname{Dom}\left(G\right), \mathcal{G}_{t}\right)$ is maximally dissipative. \\\\
Since $\left(\operatorname{Dom}\left(G\right), -iG\right)$ and $\left(\operatorname{Dom}\left(G\right), B_{u_0}^t\right)$ are dissipative, we know that $\left(\operatorname{Dom}\left(G\right), \mathcal{G}_{t}\right)$ is dissipative. \\\\
Then we only need to show that $\mathcal{G}_{t}+i z \operatorname{Id}: \operatorname{Dom}\left(G\right) \rightarrow L_{+}^{2}(\mathbb{R})$ is bijective for some $z \in \mathbb{C}_{+}$.  We write
\begin{align*}
\mathcal{G}_t + i z \operatorname{Id}  =  -iG + B_{u_0}^t + i z \operatorname{Id} = \left(\operatorname{Id}  + B_{u_0}^t (-iG+i z \operatorname{Id})^{-1}\right)\left(-iG+i z \operatorname{Id}\right).
\end{align*}
Since $B_{u_0}^t$ is infinitesimally small with respect to $G$, for $z \in i\mathbb{R}_{>0}$ and for any $\varepsilon >0$, we have
\begin{align*}
\forall f \in L_{+}^2(\mathbb{R}), \quad \left\|B_{u_0}^t(-iG+ iz\operatorname{Id})^{-1}f\right\|_{L^2} & \leq \varepsilon \left\|G(-iG+ iz\operatorname{Id})^{-1} f\right\|_{L^2} + C_{\varepsilon} \left\|(-iG+ iz\operatorname{Id})^{-1} f\right\|_{L^2} \\ & \leq \left(\varepsilon+\frac{C_{\varepsilon}}{\Im(z)}\right) \|f\|_{L^2}.
\end{align*}
The last inequality above comes from (\ref{1.90}).\\\\
Then we choose $\varepsilon = \frac{1}{4}$ and $z \in i\mathbb{R}_{>0}$ such that $C_{\frac{1}{4}}/\Im(z) < \frac{1}{4}$, and we have
\begin{align*}
\left\|B_{u_0}^t(G+ iz\operatorname{Id})^{-1}f\right\|_{L^2} < \frac{1}{2}\|f\|_{L^2}.
\end{align*}
Since $G$ is maximally dissipative, we can deduce that $\mathcal{G}_{t}+i z \operatorname{Id}: \operatorname{Dom}\left(G\right) \rightarrow L_{+}^{2}(\mathbb{R})$ is bijective for some $z \in i\mathbb{R}_{>0}$, which provides that $\left(\operatorname{Dom}\left(G\right), \mathcal{G}_{t}\right)$ is maximally dissipative, so is $\left(\operatorname{Dom}\left(A_t\right), -iG + 2it L_{u_0}\right)$.
\end{proof}
By Corollary \ref{corollary 2.2},  we know that $\left(\operatorname{Dom}\left(A_t\right), -iG + 2it L_{u_0}\right)$ is maximally dissipative, thus for every $z \in \mathbb{C}_{+}$, the operator $\left(G-2tL_{u_0} - zId\right)^{-1}$ is well-defined on $L_{+}^2(\mathbb{R})$.\\\\
Now we are able to present the following proof of Theorem \ref{theorem 1.2}.\\\\
\textit {Proof of Theorem \ref{theorem 1.2}}. For $u_0 \in L_r^2(\mathbb{R})$, we can take $u_0^n \in L_r^2(\mathbb{R})\cap L^{\infty}(\mathbb{R})$ which tends to $u_0$ in $L^2(\mathbb{R})$, then we can easily deduce that $\Pi u_0^n$ tends to $\Pi u_0$ in $L_{+}^2(\mathbb{R})$. We denote the solutions of (\ref{0.1}) by $u^n(t)$ and $u(t)$ corresponding to $u_0^n$ and $u_0$. By the continuity of the flow map, we can deduce that $u^n(t)$ tends to $u(t)$ in $L^2(\mathbb{R})$. Then for $z \in \mathbb{C}_{+}$, we have
\begin{align*}
\left|\Pi u^n(t,z)-\Pi u(t,z)\right| \leq \int_{0}^{\infty} \mathrm{e}^{-\xi\Im(z)} |\widehat{u^n}(t,\xi)-\widehat{u}(t,\xi)| d\xi \leq C \left\|u^n(t) - u(t)\right\|_{L^2(\mathbb{R})} \rightarrow 0,
\end{align*}
which implies the pointwise convergence of $\Pi u^n(t,z)$ to $\Pi u(t,z)$ for all $z \in \mathbb{C}_{+}$. Moreover, by Lemma \ref{lemma 3.1} and Corollary \ref{corollary 2.2}, we can easily deduce that for every $f \in L_{+}^2(\mathbb{R})$,
\begin{align*}
B_{u_0^n}^t \left(G-zId\right)^{-1} f \rightarrow B_{u_0}^t \left(G-zId\right)^{-1} f \text{ in } L_{+}^2(\mathbb{R}), \quad \forall z \in \mathbb{C}_{+},
\end{align*}
which implies that
\begin{align*}
\left(G-2 t L_{u_{0}^n}-z \mathrm{Id}\right)^{-1}f \rightarrow \left(G-2 t L_{u_{0}}-z \mathrm{Id}\right)^{-1}f  \text{ in } L_{+}^2(\mathbb{R}),  \quad \forall z \in \mathbb{C}_{+}.
\end{align*}
Then we recall the following explicit formula of $\Pi(u^n(t,z))$,
\begin{equation}
\label{2.4}
\Pi(u^n(t,z)) = \frac{1}{2 i \pi} I_{+}\left(\left(G-2 t L_{u_{0}^n}-z\mathrm{Id}\right)^{-1}\Pi u_{0}^n\right), \quad \forall z \in \mathbb{C}_{+}.
\end{equation}
From the previous arguments, we can conclude that the formula (\ref{2.4}) converges pointwisely in $\mathbb{C}_{+}$ to
\begin{align*}
\Pi(u(t,z)) = \frac{1}{2 i \pi} I_{+}\left(\left(G-2 t L_{u_{0}}-z\mathrm{Id}\right)^{-1}\Pi u_{0}\right), \quad \forall z \in \mathbb{C}_{+}.
\end{align*}
The proof is complete.
\section{Proof of the extension of the formula for the zero dispersion limit}
\label{section 3}
In this section, we will show why the formula (\ref{1.4}) can be extended 
to the initial data $u_0 \in L_r^2(\mathbb{R}) \cap L_{loc}^{\infty} (\mathbb{R})$ with $\lim_{x \to \infty} \frac{|u_0(x)|}{|x|} = 0$. Before proving Theorem \ref{theorem 1.3}, let us first give two important observations.  \\\\
First we give an important lemma which is useful in the sequel.
\begin{lemma}
\label{lemma 3.001}
Given $u_0 \in L_r^2(\mathbb{R})$ and $f_0 \in L_{+}^2(\mathbb{R})$, and we define
\begin{align*}
f(t,z) : = \frac{1}{2 i \pi} I_{+}\left(\left(G-2 t L_{u_{0}}-z\mathrm{Id}\right)^{-1}f_0\right), \quad \forall z \in \mathbb{C}_{+}, \, \forall t \in \mathbb{R}.
\end{align*}
Then we have
\begin{equation}
\label{3.001}
\|f\|_{L^2(\mathbb{R})} \leq  \|f_0\|_{L^2(\mathbb{R})}, \quad \forall t \in \mathbb{R}.
\end{equation}
\end{lemma}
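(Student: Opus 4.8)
\emph{Proof proposal.} The strategy is to work on the Hardy side and prove that $\sup_{y>0}\int_{\mathbb R}|f(t,x+iy)|^2\,dx\le\|f_0\|_{L^2}^2$; by the Paley--Wiener characterisation of $L^2_+$ this simultaneously shows $f(t,\cdot)\in L^2_+$ and yields (\ref{3.001}). First I would pass to the conjugated picture of Section \ref{section 2}. Since $G-2tL_{u_0}=i\,(-iG+2itL_{u_0})=e^{-itD^2}(i\mathcal G_t)e^{itD^2}$, one has $(G-2tL_{u_0}-z)^{-1}=e^{-itD^2}(i\mathcal G_t-z)^{-1}e^{itD^2}$, and using that $e^{itD^2}$ is unitary and satisfies $I_+(e^{itD^2}\phi)=I_+(\phi)$ it follows that $f(t,z)=\tfrac1{2i\pi}I_+(g_z)$ with $g_z:=(i\mathcal G_t-z)^{-1}\widetilde f_0\in\operatorname{Dom}(\mathcal G_t)=\operatorname{Dom}(G)$, where $\widetilde f_0:=e^{itD^2}f_0$ and $\|\widetilde f_0\|_{L^2}=\|f_0\|_{L^2}$. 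By Corollary \ref{corollary 2.2}, $(\operatorname{Dom}(G),\mathcal G_t)$ is maximally dissipative, hence generates a $C_0$ contraction semigroup $\widetilde S(s)=e^{s\mathcal G_t}$ ($s\ge0$) on $L^2_+$, and since $i\mathcal G_t-z=i\bigl(\mathcal G_t-(-iz)\bigr)$ with $\Re(-iz)=\Im z>0$, the Laplace representation of the resolvent gives $g_z=i\int_0^\infty e^{izs}\widetilde S(s)\widetilde f_0\,ds$ for $z\in\mathbb C_+$.

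Next, fix $y>0$ and write $z=x+iy$. Because $g_z\in\operatorname{Dom}(G)$ I can use $|I_+(g_z)|^2=-4\pi\Im\langle Gg_z\mid g_z\rangle$. Writing $\mathcal G_t=-iG+B_{u_0}^t$ and using $(i\mathcal G_t-z)g_z=\widetilde f_0$ gives $Gg_z=\widetilde f_0+z\,g_z-iB_{u_0}^t g_z$; taking the inner product with $g_z$, recalling $\Re\langle B_{u_0}^t g_z\mid g_z\rangle=0$ (Lemma \ref{lemma 3.1}) and $\Im(z\|g_z\|^2)=y\|g_z\|^2$, I obtain the pointwise identity
\[
|f(t,x+iy)|^2=-\tfrac1\pi\Bigl(\Im\langle\widetilde f_0\mid g_{x+iy}\rangle+y\,\|g_{x+iy}\|^2\Bigr),
\]
and dissipativity of $\mathcal G_t$ forces $\Im\langle\widetilde f_0\mid g_{x+iy}\rangle\le-y\|g_{x+iy}\|^2\le0$, so the integrand is nonnegative.

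Now I integrate in $x$. The Hilbert-space-valued Plancherel theorem applied to $g_{x+iy}=i\int_0^\infty e^{-ys}e^{ixs}\widetilde S(s)\widetilde f_0\,ds$ gives $\int_{\mathbb R}\|g_{x+iy}\|^2\,dx=2\pi\int_0^\infty e^{-2ys}\|\widetilde S(s)\widetilde f_0\|^2\,ds$, while a Dirichlet-kernel computation gives $\int_{\mathbb R}\langle\widetilde f_0\mid g_{x+iy}\rangle\,dx=-i\pi\|\widetilde f_0\|^2$ (interpreted as the symmetric limit, which equals the genuine integral because the integrand above is nonnegative); hence $\int_{\mathbb R}\Im\langle\widetilde f_0\mid g_{x+iy}\rangle\,dx=-\pi\|f_0\|_{L^2}^2$ and
\[
\int_{\mathbb R}|f(t,x+iy)|^2\,dx=\|f_0\|_{L^2}^2-2y\int_0^\infty e^{-2ys}\|\widetilde S(s)\widetilde f_0\|_{L^2}^2\,ds\le\|f_0\|_{L^2}^2 .
\]
The right-hand side is nonincreasing in $y$, and as $y\to0^+$ the Abelian theorem makes the defect term converge to $\lim_{s\to\infty}\|\widetilde S(s)\widetilde f_0\|_{L^2}^2$; taking the supremum over $y>0$ and applying Paley--Wiener concludes, with in fact $\|f(t,\cdot)\|_{L^2}^2=\|f_0\|_{L^2}^2-\lim_{s\to\infty}\|\widetilde S(s)\widetilde f_0\|_{L^2}^2$. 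The same computation works verbatim for $t\le0$, since $\mathcal G_t$ is maximally dissipative for every $t\in\mathbb R$.

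I expect the real work to lie in the two integral identities. The Plancherel step is routine since $s\mapsto e^{-ys}\widetilde S(s)\widetilde f_0$ lies in $L^1\cap L^2\bigl((0,\infty);L^2_+\bigr)$. The Dirichlet-kernel identity is the more delicate point: from $\langle\widetilde f_0\mid g_{x+iy}\rangle=-i\int_0^\infty e^{-ys}e^{-ixs}\langle\widetilde f_0\mid\widetilde S(s)\widetilde f_0\rangle\,ds$, integrating over $x\in(-R,R)$ leaves $\int_0^\infty\bigl(e^{-ys}\langle\widetilde f_0\mid\widetilde S(s)\widetilde f_0\rangle-\|\widetilde f_0\|^2\bigr)\tfrac{2\sin Rs}{s}\,ds$, which tends to $0$ as $R\to\infty$ by splitting near $s=0$ (controlled uniformly in $R$ via $\sup_T\bigl|\int_0^T\tfrac{2\sin u}{u}\,du\bigr|<\infty$ together with continuity of $s\mapsto\langle\widetilde f_0\mid\widetilde S(s)\widetilde f_0\rangle$ at $0$) and away from $s=0$ (Riemann--Lebesgue). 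Finally, one should record that $z\mapsto f(t,z)$ is holomorphic on $\mathbb C_+$ so that Paley--Wiener applies: this holds because $z\mapsto g_z$ is holomorphic with values in $\operatorname{Dom}(G)$ — the local bound on $\|Gg_z\|$ following from $B_{u_0}^t$ being infinitesimally $G$-bounded — and $I_+$ is continuous on $\operatorname{Dom}(G)$ for the graph norm.
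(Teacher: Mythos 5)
Your argument is correct in substance but follows a genuinely different route from the paper. The paper first treats $u_0\in H_r^2(\mathbb{R})$ by invoking the integrable structure from \cite{1}: the unitary family $U(t)$ and the conjugacy $U(t)^*GU(t)=\mathrm{e}^{itL_{u_0}^2}G\mathrm{e}^{-itL_{u_0}^2}-2tL_{u_0}$ give the exact identity $f(t)=U(t)\mathrm{e}^{itL_{u_0}^2}f_0$, hence equality of norms; the case $u_0\in L^2_r(\mathbb{R})$ then follows by approximation, weak compactness and weak lower semicontinuity of the norm, using the resolvent convergence from the proof of Theorem \ref{theorem 1.2}. You instead work directly with general $u_0\in L_r^2(\mathbb{R})$, using only the maximal dissipativity of $\mathcal{G}_t$ (Corollary \ref{corollary 2.2}), the Lumer--Phillips contraction semigroup, the identity $|I_+(g)|^2=-4\pi\Im\langle Gg\mid g\rangle$, and Plancherel on the Hardy space. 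This is more self-contained (no appeal to the deformation identities of \cite{1} and no density argument), and it even produces the sharper statement $\|f(t)\|_{L^2}^2=\|f_0\|_{L^2}^2-\lim_{s\to\infty}\|\widetilde S(s)\widetilde f_0\|_{L^2}^2$; the paper's route, on the other hand, shows that for smooth data the defect vanishes, which your abstract argument cannot see.

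One step needs repair as written: the Dirichlet-kernel limit $\int_0^\infty h(s)\tfrac{2\sin Rs}{s}\,ds\to\pi h(0^+)$ does \emph{not} follow from mere continuity of $h$ at $0$ together with the uniform bound on $\int_0^T\tfrac{2\sin u}{u}\,du$ --- the error term $\int_0^\delta(h(s)-h(0))\tfrac{2\sin Rs}{s}\,ds$ is only controlled by $\omega(\delta)\log(R\delta)$, and pointwise Dirichlet inversion genuinely requires a Dini or bounded-variation condition that $s\mapsto\langle\widetilde f_0\mid\widetilde S(s)\widetilde f_0\rangle$ need not satisfy. The identity you want is nevertheless true, and the positivity you already established rescues it: since $x\mapsto-\Im\langle\widetilde f_0\mid g_{x+iy}\rangle$ is the (nonnegative) Fourier transform of an $L^1$ function that is continuous at the origin with value $\tfrac12\|\widetilde f_0\|^2$ after symmetrization, the standard inversion lemma for integrable functions with nonnegative Fourier transform (equivalently, Fej\'er summation combined with the monotonicity in $R$ of the symmetric partial integrals of a nonnegative integrand) yields $\int_{\mathbb{R}}\bigl(-\Im\langle\widetilde f_0\mid g_{x+iy}\rangle\bigr)\,dx=\pi\|\widetilde f_0\|_{L^2}^2$. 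With that substitution the proof closes.
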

\begin{proof}
We first assume $u_0 \in H_r^2(\mathbb{R})$. We follow the approach in the derivation of explicit formula in \cite{1}. In fact,
\begin{align*}
f(t,z) & = \frac{1}{2 i \pi} I_{+}\left(\left(G-2 t L_{u_{0}}-z\mathrm{Id}\right)^{-1}f_0\right) \\ & = \lim _{\varepsilon \rightarrow 0} \frac{1}{2 i \pi}\left\langle\left(G-2 t L_{u_0}-z \mathrm{Id}\right)^{-1} f_0 \mid \chi_{\varepsilon}\right\rangle \\ & = \lim _{\varepsilon \rightarrow 0} \frac{1}{2 i \pi}\left\langle\left(\mathrm{e}^{i t L_{u_0}^2} G \mathrm{e}^{-i t L_{u_0}^2}-2 t L_{u_0}-z \mathrm{Id}\right)^{-1} \mathrm{e}^{i t L_{u_0}^2} \Pi u_0 \mid \mathrm{e}^{i t L_{u_0}^2} \chi_{\varepsilon}\right\rangle,
\end{align*}
where
\begin{align*}
\chi_{\varepsilon}(x):=\frac{1}{1-i \varepsilon x},
\end{align*}
and $\left\langle \cdot \mid \cdot \right\rangle$ is the $L_{+}^2$ inner product. \\\\
We then introduce the family $U(t)$ of unitary operators defined by the linear initial value problem in $\mathscr{L}\left(L_{+}^2(\mathbb{R})\right)$,
\begin{align*}
U^{\prime}(t)=B_{u(t)} U(t), U(0)=\mathrm{Id} \text{ with } B_u:=i\left(T_{|D| u}-T_u^2\right),
\end{align*}
where $u(t)$ is the corresponding solution to \eqref{0.1} with the initial data $u_0$.\\\\
From the proof in \cite[Section 3]{1}, we know that
\begin{align*}
U(t)^* G U(t) =\mathrm{e}^{i t L_{u_0}^2} G \mathrm{e}^{-i t L_{u_0}^2}-2 t L_{u_0} 
\end{align*}
and
\begin{align*}
U(t)^* \chi_{\varepsilon}-\mathrm{e}^{i t L_{u_0}} \chi_{\varepsilon} \underset{\varepsilon\to 0}{\longrightarrow} 0
\end{align*}
in $L_{+}^2$. Plugging these informations into the formula which gives $f(t,z)$, we have
\begin{align*}
f(t,z) & = \lim _{\varepsilon \rightarrow 0} \frac{1}{2 i \pi}\left\langle\left(U(t)^* G U(t)-z \mathrm{Id}\right)^{-1} \mathrm{e}^{i t L_{u_0}^2} f_0 \mid U(t)^* \chi_{\varepsilon}\right\rangle\\ & = \lim _{\varepsilon \rightarrow 0} \frac{1}{2 i \pi}\left\langle (G-z \mathrm{Id})^{-1} U(t) \mathrm{e}^{i t L_{u_0}^2} f_0 \mid  \chi_{\varepsilon}\right\rangle.
\end{align*}
From (\ref{1.30}), we know that for any $g \in L_{+}^2(\mathbb{R})$, we have
\begin{align*}
g(z) = \frac{1}{2 i \pi} I_{+}\left[(G-z \mathrm{Id})^{-1} g\right] = \lim _{\varepsilon \rightarrow 0} \frac{1}{2 i \pi}\left\langle(G-z \mathrm{Id})^{-1} g \mid \chi_{\varepsilon}\right\rangle.
\end{align*}
So we infer $f(t) = U(t) \mathrm{e}^{i t L_{u_0}^2} f_0 $, thus
\begin{equation}
\label{3.002}
\|f(t)\|_{L^2(\mathbb{R})} = \|f_0\|_{L^2(\mathbb{R})}, \quad \forall t \in \mathbb{R}.
\end{equation}
Now we consider the case of $u_0 \in L_r^2 (\mathbb{R})$. We take $u_0^n \in H_r^2(\mathbb{R}) \underset{n \to \infty}{\longrightarrow} u_0$ in $L^2(\mathbb{R})$ and define
\begin{align*}
f^{n}(t,z) : = \frac{1}{2 i \pi} I_{+}\left(\left(G-2 t L_{u_{0}^{n}}-z\mathrm{Id}\right)^{-1}f_0\right).
\end{align*}
Since we have (\ref{3.002}), there exists a subsequence $f^{n_k}$ such that $f^{n_k}(t) \underset{k \to \infty}{\rightharpoonup} h(t)$ in $L^2(\mathbb{R})$, which implies $f^{n_k}(t,z)$ converges pointwisely to $h(t,z)$ in $\mathbb{C}_{+}$ as $k \to \infty$. Also, from the proof of Theorem \ref{theorem 1.2}, we know that $f^{n_k}(t,z)$
converges pointwisely in $\mathbb{C}_{+}$ to
\begin{align*}
f(t,z) : = \frac{1}{2 i \pi} I_{+}\left(\left(G-2 t L_{u_{0}}-z\mathrm{Id}\right)^{-1}f_0\right)
\end{align*}
as $k \to \infty$. Thus $h$ and $f$ coincide. Since $f$ is the weak limit of $f^{n_k}$ in $L^2(\mathbb{R})$, we know that
\begin{align*}
\|f(t)\|_{L^2(\mathbb{R})} \leq \liminf_{k\to \infty} \|f^{n_k}(t)\|_{L^2(\mathbb{R})} = \|f_0\|_{L^2(\mathbb{R})}, \quad \forall t \in \mathbb{R},
\end{align*}
which implies (\ref{3.001}).
\end{proof}
\begin{remark}
\label{remark 3.02}
From Lemma \ref{lemma 3.001}, we know that
\begin{align*}
|f(t,z)| \leq C(z) \|f_0\|_{L^2(\mathbb{R})}, \quad \forall t \in \mathbb{R}, \, z \in \mathbb{C}_{+},
\end{align*}
where $C(z)$ depends only on $z$.
\end{remark}
Now we consider the equation (\ref{1.02}) with $u_0 \in L_r^2(\mathbb{R})$. By an elementary scaling argument, the solution $u^{\varepsilon}$ of (\ref{1.02}) is given by
\begin{align*}
u^{\varepsilon}(t, x)=\varepsilon v^{\varepsilon}(\varepsilon t, x),
\end{align*}
where $v^{\varepsilon}$ is the solution of the Benjamin–Ono equation (\ref{0.1}) with the initial data 
\begin{align*}
v^{\varepsilon}(0, x)=\frac{1}{\varepsilon} u_{0}(x).
\end{align*}
By applying the explicit formula (\ref{0.2}) to $v^{\varepsilon}$, we infer, for every $z \in \mathbb{C}_{+}$,
\begin{equation}
\label{3.01}
\Pi u^{\varepsilon}(t,z)=\frac{1}{2i\pi}I_{+}\left(\left(G-2 \varepsilon t D + 2t T_{u_{0}}-z \mathrm{Id}\right)^{-1}  \Pi u_{0}\right).
\end{equation}
Formally, we expect the above function converges pointwisely in $\mathbb{C}_{+}$ to
\begin{equation}
\label{3.004}
\frac{1}{2i\pi}I_{+}\left(\left(G+2 t  T_{u_{0}} -z \mathrm{Id}\right)^{-1} \Pi u_{0}\right),
\end{equation}
and such result has been shown in \cite{6} for $u_0 \in L_r^2(\mathbb{R}) \cap L^{\infty}(\mathbb{R})$. In fact, 
we observe that
\begin{align*}
& \left(G+2 t  T_{u_{0}} -z \mathrm{Id}\right) = \left(Id +2 t T_{u_{0}} (G-zId)^{-1}\right)\left(G-zId\right).
\end{align*}
So we may expect that
\begin{equation}
\label{3.02}
\forall f\in L_{+}^2(\mathbb{R}), \quad T_{u_0} (G-zId)^{-1} f \in L_{+}^2(\mathbb{R}).
\end{equation}
We recall the formula (\ref{1.20}),
\begin{align*}
\forall f \in L_{+}^2(\mathbb{R}), \quad \left(G-zId\right)^{-1} f(x) = \frac{f(x)-f(z)}{x-z},
\end{align*}
In fact, for any $z \in \mathbb{C}_{+}$, $\frac{f(z)}{x-z} \in L_x^{\infty}(\mathbb{R})$, so we already have $T_{u_0}\frac{f(z)}{\cdot-z} \in L_{+}^2(\mathbb{R})$. Then we can deduce that (\ref{3.02}) is equivalent to 
\begin{equation}
\label{4.021}
\forall z \in \mathbb{C}_{+}, \quad \forall f \in L_{+}^2(\mathbb{R}), \quad T_{u_0}\frac{f(\cdot)}{\cdot-z} \in L_{+}^{2}(\mathbb{R}).
\end{equation}
Since (\ref{4.021}) holds for all $f \in L_{+}^2(\mathbb{R})$, from (\ref{1.3}), we know that (\ref{4.021}) is equivalent to
\begin{equation}
\label{4.02}
\forall z \in \mathbb{C}_{+}, \quad \frac{u_0(x)}{x-z} \in L_{x}^{\infty}(\mathbb{R}).
\end{equation}
We can also observe that (\ref{4.02}) is equivalent to
\begin{equation}
\label{4.03}
|u_0(x)| \leq C \langle x \rangle \quad \text{ with } \quad\langle x \rangle : = (1+x^2)^{\frac{1}{2}}.
\end{equation}
From the previous arguments, we can deduce that (\ref{4.03}) is a sufficient and necessary condition for \eqref{3.02}. So we may only expect (\ref{1.4}) to hold for initial data in $L_r^2(\mathbb{R})$ which satisfies at least the condition (\ref{4.03}). So far, for $u_0 \in L_r^2(\mathbb{R})$ with $|u_0(x)| \leq C \langle x \rangle$, we cannot show that the zero dispersion limit exists and obtain the formula (\ref{1.4}) for every $t \in \mathbb{R}$, but we can still show that this argument holds for $|t|<\frac{1}{2C}$. Moreover, with $u_0 \in L_r^2(\mathbb{R}) \cap L_{loc}^{\infty} (\mathbb{R})$ satisfying $\lim_{x \to \infty} \frac{|u_0(x)|}{|x|} = 0$, which is a slightly stronger condition than (\ref{4.03}), we can deduce that the zero dispersion limit exists and obtain the formula (\ref{1.4}) for every $t \in \mathbb{R}$.
\begin{remark}
The derivation of \eqref{3.004} is rough. In fact, for $u_0 \in L_r^2(\mathbb{R}) \cap L_{loc}^{\infty} (\mathbb{R})$ satisfying $\lim_{x \to \infty} \frac{|u_0(x)|}{|x|} = 0$, so far we cannot show that the $L^2$ norm of $\left(G-2\varepsilon tD+2 t  T_{u_0}-z \mathrm{Id}\right)^{-1} \Pi u_0$ is uniformly bounded in $\varepsilon$ as $\varepsilon$ tends to 0. Fortunately, Lemma \ref{lemma 3.001} can help us avoid this difficulty, see the proof of Theorem \ref{theorem 1.3} in details.
\end{remark}
Now we deal with the proof of Theorem \ref{theorem 1.3}. To prove Theorem \ref{theorem 1.3}, first we show that $\left(\operatorname{Dom}(G),-iG-2itT_{u_0}\right)$ is maximally dissipative. 
\begin{lemma}
\label{lemma 3.01}
For $u_0 \in L_r^2(\mathbb{R}) \cap L_{loc}^{\infty} (\mathbb{R})$ with $\lim_{x \to \infty} \frac{|u_0(x)|}{|x|} = 0$, $\left(\operatorname{Dom}(G),-iG-2itT_{u_0}\right)$ is maximally dissipative.
\end{lemma}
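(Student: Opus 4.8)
The plan is to write $-iG-2itT_{u_0}=A+B$ with $A:=-iG$ and $B:=-2itT_{u_0}$ (taken with domain $\operatorname{Dom}(G)$) and to invoke the Kato--Rellich theorem, Theorem \ref{theorem 1.5}. Since $\big(\operatorname{Dom}(G),-iG\big)$ is maximally dissipative, it will suffice to show that $B$ is well defined on $\operatorname{Dom}(G)$, dissipative there, and $G$-bounded with relative bound $<1$; I will in fact check that $B$ is infinitesimally small with respect to $G$, for every fixed $t\in\mathbb{R}$. Dissipativity is immediate and uses only that $u_0$ is real valued and $t$ real: $T_{u_0}$ is self-adjoint, so $\langle T_{u_0}f,f\rangle\in\mathbb{R}$ for $f\in\operatorname{Dom}(G)$, hence $\langle Bf,f\rangle=-2it\langle T_{u_0}f,f\rangle$ is purely imaginary and $\Re\langle Bf,f\rangle=0$, so Remark \ref{remark 1.9} applies.

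The crux will be the estimate that for every $\varepsilon>0$ there is $C_\varepsilon$ with $\|T_{u_0}f\|_{L^2}\le\|u_0f\|_{L^2}\le\varepsilon\|Gf\|_{L^2}+C_\varepsilon\|f\|_{L^2}$ for all $f\in\operatorname{Dom}(G)$. The obstruction is that $xf\notin L^2$ in general on $\operatorname{Dom}(G)$ --- exactly as in the proof of Lemma \ref{lemma 3.1}, $\widehat{xf}$ carries a Dirac mass at $\xi=0^+$ of weight proportional to $I_+(f)$ --- so one cannot bound $\|u_0f\|_{L^2}$ directly through $\|\langle x\rangle f\|_{L^2}$. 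I would borrow the device of that proof: let $g$ be the function with $\hat g(\xi)=\mathbf{1}_{\xi\ge 0}\mathrm{e}^{-\xi}$, $I_+(g)=1$, which one computes to satisfy $|g(x)|=\tfrac{1}{2\pi\langle x\rangle}$ (the inverse Fourier transform of $\mathbf{1}_{\xi\ge 0}\mathrm{e}^{-\xi}$ is $\tfrac{1}{2\pi(1-ix)}$ and $|1-ix|=\langle x\rangle$). Writing $f=(f-I_+(f)g)+I_+(f)g$: the first summand has $I_+=0$, so its Fourier transform is continuous at the origin and $x(f-I_+(f)g)=G(f-I_+(f)g)\in L^2$; combined with $|I_+(f)|^2\le 4\pi\|Gf\|_{L^2}\|f\|_{L^2}$ this gives $\|\langle x\rangle(f-I_+(f)g)\|_{L^2}\le C\big(\|Gf\|_{L^2}+\|Gf\|_{L^2}^{1/2}\|f\|_{L^2}^{1/2}+\|f\|_{L^2}\big)$. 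The second summand satisfies $|u_0g|=\tfrac{|u_0|}{2\pi\langle x\rangle}\le\tfrac1{2\pi}|u_0|\in L^2$, so $\|I_+(f)u_0g\|_{L^2}\le C\|u_0\|_{L^2}\|Gf\|_{L^2}^{1/2}\|f\|_{L^2}^{1/2}$. Finally I would split $u_0=u_0\mathbf{1}_{|x|\le R}+u_0\mathbf{1}_{|x|>R}$: the first piece lies in $L^\infty$ with norm $M_R<\infty$ (using $u_0\in L^\infty_{loc}$), and on $\{|x|>R\}$ the hypothesis $\lim_{|x|\to\infty}|u_0(x)|/|x|=0$ gives $|u_0(x)|\le\delta_R\langle x\rangle$ with $\delta_R\to 0$ as $R\to\infty$. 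Combining these bounds and absorbing every factor $\|Gf\|_{L^2}^{1/2}\|f\|_{L^2}^{1/2}$ by Young's inequality yields $\|u_0f\|_{L^2}\le 2\delta_R\|Gf\|_{L^2}+C_R\|f\|_{L^2}$; choosing $R$ so that $2\delta_R<\varepsilon$ proves the estimate --- and in particular shows that $B$ is well defined on $\operatorname{Dom}(G)$ and infinitesimally small with respect to $G$.

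Granting the dissipativity and the infinitesimal bound, Theorem \ref{theorem 1.5} gives that $\big(\operatorname{Dom}(G),-iG-2itT_{u_0}\big)$ is maximally dissipative, which is the assertion. I expect the main obstacle to be precisely this infinitesimal estimate: one must handle the failure of $xf\in L^2$ via the $g$-subtraction, and exploit the identity $|1-ix|=\langle x\rangle$ to place $u_0g$ in $L^2$ with the correct constant. The sublinear growth hypothesis is exactly what drives the $\|Gf\|_{L^2}$-coefficient to $0$; under the weaker bound $|u_0|\le C\langle x\rangle$ the same computation produces only a relative bound of order $2|t|C$, which is why Corollary \ref{corollary 1.4} is restricted to $|t|<\tfrac1{2C}$.
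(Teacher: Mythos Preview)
Your proposal is correct and follows essentially the same approach as the paper: both apply Kato--Rellich by showing that $-2itT_{u_0}$ is infinitesimally $G$-bounded via the same $g$-subtraction trick (to repair the failure of $xf\in L^2$ on $\operatorname{Dom}(G)$), the same near/far split of $u_0$ using $L^\infty_{\mathrm{loc}}$ and the sublinear-growth hypothesis, and Young's inequality to clean up the mixed powers. The only cosmetic differences are that you make the dissipativity check explicit (the paper leaves it implicit, relying on the analogous computation in Lemma~\ref{lemma 3.1}) and that you invoke the exact form $|g(x)|=\tfrac{1}{2\pi\langle x\rangle}$ where the paper simply uses $\|g\|_{L^\infty}\le\|\hat g\|_{L^1}$; both routes give $\|u_0(I_+(f)g)\|_{L^2}\lesssim\|u_0\|_{L^2}|I_+(f)|$.
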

\begin{proof}
Since $\left(\operatorname{Dom}(G),-iG\right)$ is maximally dissipative, it suffices to prove that, for $0<t<\infty$ fixed, $-2itT_{u_0}$ is infinitesimally small with respect to $G$. It is equivalent to show that, for any $\varepsilon > 0$, we have
\begin{equation}
\label{3.4}
\|T_{u_0} f \|_{L^2} \leq \varepsilon \|Gf\|_{L^2} + C_{\varepsilon}\|f\|_{L^2}, \quad \forall f \in \operatorname{Dom}(G).
\end{equation}
We follow an approach which we used in the proof of Lemma \ref{lemma 3.1}. We recall the definition of $g$,
\begin{align*}
\widehat{g}(\xi) : = \mathbf{1}_{\xi \geq 0} \, \mathrm{e}^{-\xi}.
\end{align*}
Since $u_0$ satisfies $\lim_{x \to \infty} \frac{|u_0(x)|}{|x|} = 0$, then for any $\varepsilon > 0$, there exists $R_{\varepsilon} > 0$ such that
\begin{align*}
\frac{|u_0(x)|}{|x|} < \varepsilon \quad \text{ for all } \quad  |x| \geq R_{\varepsilon}.
\end{align*}
Also, since $u_0 \in L_{loc}^{\infty}(\mathbb{R})$, there exists $M_{\varepsilon} > 0$ such that
\begin{align*}
\|u_0\|_{L^\infty(|x| < R_{\varepsilon})} \leq M_{\varepsilon}.
\end{align*}
Then for $f \in \operatorname{Dom}(G)$, we have
\begin{align*}
& \|T_{u_0} f \|_{L^2(\mathbb{R})}\\ \leq \, & \|u_0 f\|_{L^2(|x|< R_{\varepsilon})}+ \|u_0 \left(f-I_{+}(f) g\right)  \|_{L^2(|x|\geq R_{\varepsilon})}  + \|u_0 \left(I_{+}(f) g \right) \|_{L^2(|x|\geq R_{\varepsilon})}\\ \leq \, & M_{\varepsilon} \left\|f\right\|_{L^2(\mathbb{R})} + \varepsilon \left\|x\left(f-I_{+}(f) g\right)\right\|_{L^2(\mathbb{R})} + \|u_0 \left(I_{+}(f) g \right) \|_{L^2(\mathbb{R})}.
\end{align*}
Since $\mathbf{1}_{\xi\geq 0}\left(\widehat{f}(\xi) - I_{+}(f)\widehat{g}(\xi)\right)$ is continuous at $\xi = 0$, we have
\begin{align*}
\|x (f-I_{+}(f) g)\|_{L^2(\mathbb{R})} = \|G (f-I_{+}(f) g)\|_{L^2(\mathbb{R})}.
\end{align*}
Then  we have
\begin{align*}
\|G (f-I_{+}(f) g)\|_{L^2(\mathbb{R})} & \leq \|Gf\|_{L^2(\mathbb{R})} + |I_{+}(f)| \|Gg\|_{L^2(\mathbb{R})}\\ & \leq \|Gf\|_{L^2(\mathbb{R})} + C\|G f\|_{L^2(\mathbb{R})}^{\frac{1}{2}}\|f\|_{L^{2}(\mathbb{R})}^{\frac{1}{2}}
\end{align*}
and 
\begin{align*}
\|u_0 \left(I_{+}(f) g \right) \|_{L^2(\mathbb{R})} \leq |I_{+}(f)|\|u_0\|_{L^2(\mathbb{R})} \|\hat{g}\|_{L_{\xi}^1(\mathbb{R})} \leq C \|G f\|_{L^2(\mathbb{R})}^{\frac{1}{2}}\|f\|_{L^{2}(\mathbb{R})}^{\frac{1}{2}}.
\end{align*}
Combined with the Young's inequality, we can verify (\ref{3.4}), which implies that $-2itT_{u_0}$ is infinitesimally small with respect to $G$. Then from the Kato-Rellich theorem \ref{theorem 1.5}, we can deduce that $\left(\operatorname{Dom}(G),-iG-2itT_{u_0}\right)$ is maximally dissipative, the proof is complete.
\end{proof}
\begin{remark}
\label{remark 3.2}
Since $\left(\operatorname{Dom}(G),-iG-2itT_{u_0}\right)$ is a maximally dissipative operator, we know that $\left(G+2tT_{u_0}-zId\right)^{-1}$ is well-defined for every $z \in \mathbb{C}_{+}$. By applying (\ref{1.30}), we can deduce that
\begin{align*}
\frac{1}{2 i \pi} I_{+}\left(\left(G+2 t T_{u_{0}}-z \mathrm{Id}\right)^{-1} \Pi u_{0}\right) & = \frac{1}{2 i \pi} I_{+}
\left(\left(G-zId\right)^{-1} \left(Id + 2t T_{u_0} \left(G-zId\right)^{-1}\right)^{-1}\Pi u_0\right) \\ & = \left[\left(Id + 2t T_{u_0} \left(G-zId\right)^{-1}\right)^{-1}\Pi u_0 \right](z)
\end{align*}
is well-defined and holomorphic in $\mathbb{C}_{+}$.
\end{remark}
In Theorem \ref{theorem 1.3}, the point is to prove the existence of the zero dispersion limit and show the formula (\ref{1.4}) of this zero dispersion limit. In the following derivation, we first prove the the second equality of (\ref{1.4}), and then show the existence of the zero dispersion limit.\\\\
To show the second equality of (\ref{1.4}), we need the following integral equality, which has also been introduced in Lemma \ref{lemma 1.7}.
\begin{lemma}
\label{lemma 3.2}
For $f \in L^2(\mathbb{R}) \cap L^{\infty}(\mathbb{R})$ and $n \in \mathbb{N}_{\geq 1}$, we have
\begin{equation}
\label{3.06}
\begin{aligned}
& \int_{\mathbb{R}^n} f(y_1) f(y_2-y_1) ... f(y_n - y_{n-1})f(-y_n) dy_1 dy_2...dy_n \\  = & (n+1) \int_{\{\forall 1\leq j \leq n, y_j > 0\}} f(y_1) f(y_2-y_1) ... f(y_n - y_{n-1}) f(-y_n)dy_1 dy_2...dy_n.
\end{aligned}
\end{equation}
\end{lemma}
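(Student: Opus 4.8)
The plan is to recognize (\ref{3.06}) as a measure-theoretic incarnation of the Dvoretzky--Motzkin cycle lemma. The first step is a linear change of variables in the left-hand integral: set $x_i := y_i - y_{i-1}$ for $i = 1, \dots, n$ (with $y_0 := 0$) and $x_{n+1} := -y_n$, so that $x_1 + \cdots + x_{n+1} = 0$; write $P_k := x_1 + \cdots + x_k$, so that $P_0 = P_{n+1} = 0$. This substitution $(y_1,\dots,y_n)\mapsto(x_1,\dots,x_n)$ has Jacobian $1$, it carries the integrand $f(y_1)f(y_2-y_1)\cdots f(-y_n)$ into the symmetric product $\prod_{i=1}^{n+1} f(x_i)$, and it carries the constraint $\{y_1 > 0, \dots, y_n > 0\}$ into $\{P_1 > 0, \dots, P_n > 0\}$. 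Thus, letting $H := \{x \in \mathbb{R}^{n+1} : x_1 + \cdots + x_{n+1} = 0\}$ and letting $\mu_H$ be the image of $dx_1 \cdots dx_n$ under $(x_1, \dots, x_n) \mapsto (x_1, \dots, x_n, -x_1 - \cdots - x_n)$, identity (\ref{3.06}) becomes the assertion
\[
\int_{H} \prod_{i=1}^{n+1} f(x_i) \, d\mu_H = (n+1) \int_{H} \prod_{i=1}^{n+1} f(x_i) \, \mathbf{1}_{A_0} \, d\mu_H, \qquad A_0 := \{x \in H : P_1 > 0, \dots, P_n > 0\}.
\]

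For the main step, for $j \in \{0, 1, \dots, n\}$ let $\tau_j x := (x_{1+j}, x_{2+j}, \dots, x_{n+1+j})$ with indices read modulo $n+1$; each $\tau_j$ is an orthogonal permutation of the coordinates of $\mathbb{R}^{n+1}$, maps $H$ onto $H$, preserves $\mu_H$, and leaves the function $x \mapsto \prod_{i=1}^{n+1} f(x_i)$ invariant. Put $A_j := \tau_j^{-1}(A_0)$, which is the set of $x \in H$ whose cyclically shifted tuple $\tau_j x$ has all of its first $n$ partial sums positive. Extending $(P_k)_k$ periodically to all integers (legitimate since $P_{n+1} = 0$), the first $n$ partial sums of $\tau_j x$ are $P_{j+1} - P_j, \dots, P_{j+n} - P_j$; as $j+1, \dots, j+n$ exhaust, modulo $n+1$, all residues other than $j$, these are all positive exactly when $P_j$ is the strictly smallest of $P_0, \dots, P_n$. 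Since $P_0 \equiv 0$ while $(P_1, \dots, P_n)$ is a linear isomorphism in the coordinates $(x_1, \dots, x_n)$, the subset of $H$ on which $P_0, \dots, P_n$ fail to be pairwise distinct is a finite union of proper affine subspaces and hence $\mu_H$-null; off it exactly one of $A_0, \dots, A_n$ contains $x$, so that
\[
\sum_{j=0}^{n} \mathbf{1}_{A_j}(x) = 1 \qquad \text{for } \mu_H\text{-almost every } x \in H.
\]
Multiplying this by $\prod_{i=1}^{n+1} f(x_i)$, integrating over $H$, and using in the $j$-th term that $\tau_j$ preserves $\mu_H$ and the integrand and maps $A_j$ onto $A_0$, one finds that all $n+1$ resulting integrals equal $\int_H \prod_{i=1}^{n+1} f(x_i) \, \mathbf{1}_{A_0} \, d\mu_H$; this is the displayed identity above, hence (\ref{3.06}).

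The point requiring genuine care — and essentially the only friction I foresee — is integrability, needed both to pass between the iterated $y$-integrals and the integral over $H$ and to split and recombine the $n+1$ terms. I would run the whole argument first with $|f|$ in place of $f$: then every integral is a well-defined element of $[0, +\infty]$, Tonelli's theorem justifies the change of variables and the almost-everywhere identity $\sum_j \mathbf{1}_{A_j} = 1$ with no integrability hypothesis, and one obtains (\ref{3.06}) for $|f|$ as an equality in $[0, +\infty]$. In particular the two sides of (\ref{3.06}) converge absolutely or diverge absolutely together, and in the convergent case Fubini's theorem upgrades the identity to the signed function $f$. The hypothesis $f \in L^2(\mathbb{R}) \cap L^\infty(\mathbb{R})$ serves only to supply this absolute convergence in the situations where (\ref{3.06}) is applied: for $n = 1$ it is immediate from Cauchy--Schwarz, and in general it holds whenever the $n$-fold convolution $|f| * \cdots * |f|$ lies in $L^2(\mathbb{R})$.
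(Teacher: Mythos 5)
Your proof is correct, and the engine driving it is the same one the paper uses: a cyclic shift of the increment variables, which is a unimodular linear map fixing the integrand. (Indeed, the paper's change of variables carrying $B_{0,i,j}$ to $B_{j,n+1-i,1}$ is exactly your $\tau_i$ rewritten in the $y$-coordinates.) Where you genuinely differ is in the partition that produces the factor $n+1$. The paper partitions $\mathbb{R}^n$ into the sets $A_j$ on which exactly $j$ of the coordinates $y_1,\dots,y_n$ (your partial sums $P_1,\dots,P_n$) are negative, proves that all $n+1$ of these sets carry the same integral --- this is (\ref{3.07}) --- and needs the further sub-partition by the rank of a distinguished coordinate to set up the bijections. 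You instead partition by the position of the strict minimum of $P_0,\dots,P_n$, i.e.\ the classical Dvoretzky--Motzkin cycle lemma, which makes the statement ``exactly one cyclic shift lands in $A_0$, almost everywhere'' immediate and removes the rank bookkeeping. The two partitions really are different (for $n=2$, a point with $P_2<0<P_1$ has one negative coordinate but its minimum at index $2$), so your route does not recover the refinement (\ref{3.07}), which the paper singles out as independently interesting; but (\ref{3.07}) is not part of the lemma's statement, and for (\ref{3.06}) itself your argument is shorter. Your handling of integrability (Tonelli for $|f|$, then Fubini once absolute convergence is known) is more careful than the paper's, which is silent on the point, and your closing caveat is warranted: $f\in L^2\cap L^\infty$ alone does not guarantee absolute convergence of either side when $n\ge 2$, so the honest content of the lemma is the identity in $[0,+\infty]$ for $|f|$ together with the signed identity whenever the relevant convolutions converge absolutely --- which is the form in which the paper actually uses it.
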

\begin{proof}
For $j \in \mathbb{N}_{\geq 0}$ and $0\leq j \leq n$, we define
\begin{align*}
A_{j} : = \{(y_1, y_2, ..., y_n) \in \mathbb{R}^n | \text{ there are $j$ negative elements in }(y_1, y_2, ..., y_n) \}.
\end{align*}
We claim that, for $1\leq j \leq n$, we have
\begin{equation}
\label{3.07}
\begin{aligned}
&\int_{A_0} f(y_1) f(y_2-y_1) ... f(y_n - y_{n-1}) f(-y_n)dy_1 dy_2...dy_n \\ = & \int_{A_j} f(y_1) f(y_2-y_1) ... f(y_n - y_{n-1}) f(-y_n)dy_1 dy_2...dy_n.
\end{aligned}
\end{equation}
We notice that, if we obtain (\ref{3.07}), since the integral on the null set is always equal to 0, we have
\begin{align*}
&\int_{\mathbb{R}^n} f(y_1) f(y_2-y_1) ... f(y_n - y_{n-1})f(-y_n) dy_1 dy_2...dy_n \\ = & \sum_{j= 0}^n \int_{A_j} f(y_1) f(y_2-y_1) ... f(y_n - y_{n-1})f(-y_n) dy_1 dy_2...dy_n \\ = & (n+1)\int_{A_0} f(y_1) f(y_2-y_1) ... f(y_n - y_{n-1})f(-y_n) dy_1 dy_2...dy_n,
\end{align*}
which implies (\ref{3.06}). So the point is to prove (\ref{3.07}).\\\\
Now we prove (\ref{3.07}). For $1\leq i,j \leq n$ and $0\leq k \leq n$, we define
\begin{align*}
B_{k,i,j} : = \{(y_1, y_2, ..., y_n) \in A_k | y_i \text{ is the }j\text{-th} \text{ smallest element} \}.
\end{align*}
For $(y_1, y_2, ..., y_n) \in B_{0,i,j}$, we make the following change of variables
\begin{align*}
\left\{\begin{array}{l} z_{\ell} = y_{\ell+i} - y_i \quad  1 \leq \ell \leq n-i,  \\ z_{n+1-i} = -y_i, \\ z_{\ell} = y_{\ell+i-n-1} - y_i, \quad  n+2-i \leq \ell \leq n. \end{array}\right.
\end{align*}
We notice that $(z_1, z_2,..., z_n) \in B_{j,n+1-i, 1}$, so this linear transformation is from $B_{0,i,j}$ to $B_{j,n+1-i, 1}$, and the absolute value of the determinant of this linear transformation is 1. We also observe that the inverse of this transformation 
\begin{align*}
\left\{\begin{array}{l} y_{k} = z_{k+1+n-i} - z_{n+1-i} \quad  1 \leq k \leq i-1,  \\ y_{i} = -z_{n+1-i}, \\ y_{k} = z_{k-i} - z_{n+1-i}, \quad  i+1 \leq k \leq n. \end{array}\right.
\end{align*}
is from $B_{j,n+1-i, 1}$ to $B_{0,i,j}$, so this transformation is bijective from $B_{0,i,j}$ to $B_{j,n+1-i, 1}$. Then we have
\begin{align*}
&\int_{B_{0,i,j}} f(y_1) f(y_2-y_1) ... f(y_n - y_{n-1}) f(-y_n)dy_1 dy_2...dy_n \\ = & \int_{B_{j,n+1-i, 1}} f(z_1) f(z_2-z_1) ... f(z_n - z_{n-1}) f(-z_n)dz_1 dz_2...dz_n.
\end{align*} 
Combining the above equality, we can deduce that
\begin{align*}
&\int_{A_0} f(y_1) f(y_2-y_1) ... f(y_n - y_{n-1}) f(-y_n)dy_1 dy_2...dy_n\\ = & \sum_{i=1}^n \int_{B_{0,i,j}} f(y_1) f(y_2-y_1) ... f(y_n - y_{n-1}) f(-y_n)dy_1 dy_2...dy_n \\ = & \sum_{i=1}^n \int_{B_{j,n+1-i, 1}} f(z_1) f(z_2-z_1) ... f(z_n - z_{n-1}) f(-z_n)dz_1 dz_2...dz_n \\ = & \int_{A_j} f(z_1) f(z_2-z_1) ... f(z_n - z_{n-1}) f(-z_n)dz_1 dz_2...dz_n,
\end{align*}
which implies (\ref{3.07}). The proof of (\ref{3.06}) is complete.
\end{proof}
\begin{remark}
We notice that the left hand side of (\ref{3.06}) represents the value of the convolution of $(n+1)$-functions $f \in L^2(\mathbb{R}) \cap L^{\infty}(\mathbb{R})$ at the point 0, and the right hand side of (\ref{3.06}) represents the value of the convolution of these $(n+1)$-$f$ restricted in the support of positive half-line at the point 0. As observed in the proof of Lemma \ref{lemma 3.2}, (\ref{3.06}) is derived from (\ref{3.07}), and (\ref{3.07}) is also interesting since it gives the equality between two convolutions at the point 0 with different supports of these $f$.
\end{remark}
Now we are able to prove the second equality of (\ref{1.4}).
\begin{lemma} 
\label{lemma 3.3}
For $u_0 \in L_r^2(\mathbb{R}) \cap L_{loc}^{\infty} (\mathbb{R})$ with $\lim_{x \to \infty} \frac{|u_0(x)|}{|x|} = 0$, we have
\begin{equation}
\label{3.80}
\forall z \in \mathbb{C}_{+}, \quad \frac{1}{2 i \pi} I_{+}\left(\left(G+2 t T_{u_{0}}-z \mathrm{Id}\right)^{-1} \Pi u_{0}\right) = \frac{1}{4i\pi t}\int_{\mathbb{R}} {\rm Log }\left(1 + \frac{2t u_0(y)}{y-z}\right) dy,
\end{equation}
where {\rm Log }denotes the principal value of the logarithm.
\end{lemma}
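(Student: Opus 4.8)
The plan is to expand the resolvent in a Neumann series, evaluate each term on the Fourier side, recognise it—via Lemma~\ref{lemma 3.2}—as $(n+1)^{-1}$ times the $n$-th coefficient in the power series of the logarithm, and then remove the restriction on $z$ by analytic continuation. By Remark~\ref{remark 3.2} the left-hand side of (\ref{3.80}) equals $\big[(\mathrm{Id}+2tT_{u_0}R_z)^{-1}\Pi u_0\big](z)$ with $R_z:=(G-z\mathrm{Id})^{-1}$, and by Remarks~\ref{remark 3.2} and~\ref{remark 1.70} both sides of (\ref{3.80}) are holomorphic on the connected set $\mathbb{C}_+$. Fixing $t$, the estimate in the proof of Lemma~\ref{lemma 3.01} (infinitesimal $G$-boundedness of $T_{u_0}$) together with the resolvent bounds $\|(G-z\mathrm{Id})^{-1}\|\le(\Im z)^{-1}$ and $\|G(G-z\mathrm{Id})^{-1}\|\le 1+|z|(\Im z)^{-1}$ gives $\|2tT_{u_0}R_z\|_{\mathscr{L}(L_+^2)}\to0$, and likewise $\sup_{y\in\mathbb{R}}\big|\tfrac{2tu_0(y)}{y-z}\big|\to0$, as $\Im z\to+\infty$ with $|\Re z|\le\Im z$ (using $u_0\in L_{loc}^\infty$ with $|u_0(y)|=o(|y|)$). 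Hence it suffices to prove (\ref{3.80}) on a region $\{\Im z>R,\ |\Re z|\le\Im z\}$ and then invoke the identity theorem.

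On such a region the Neumann expansion gives $\big[(\mathrm{Id}+2tT_{u_0}R_z)^{-1}\Pi u_0\big](z)=\sum_{n\ge0}(-2t)^n\big[(T_{u_0}R_z)^n\Pi u_0\big](z)$ (convergence in $L_+^2$, hence pointwise), while expanding ${\rm Log}(1+w)=\sum_{k\ge1}\tfrac{(-1)^{k-1}}{k}w^k$ with $w=\tfrac{2tu_0(y)}{y-z}$ and using dominated convergence ($\|w\|_{L^\infty}<1$, $w\in L^2\cap L^\infty$) rewrites the right-hand side of (\ref{3.80}) as $\sum_{n\ge0}(-2t)^n\tfrac{1}{2i\pi(n+1)}\int_{\mathbb{R}}\tfrac{u_0(y)^{n+1}}{(y-z)^{n+1}}\,dy$, the integrals converging because $u_0\in L^2\cap L_{loc}^\infty$ with $|u_0(y)|=o(|y|)$ forces $\big(\tfrac{u_0}{\cdot-z}\big)^{n+1}\in L^1(\mathbb{R})$. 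So the statement reduces to proving, for every $n\ge0$,
\begin{equation}
\big[(T_{u_0}R_z)^n\Pi u_0\big](z)=\frac{1}{2i\pi(n+1)}\int_{\mathbb{R}}\frac{u_0(y)^{n+1}}{(y-z)^{n+1}}\,dy .
\tag{$\star$}
\end{equation}

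To establish $(\star)$ I would introduce $v_z\in L^2\cap L^\infty(\mathbb{R})$, the Fourier transform of $y\mapsto\tfrac{u_0(y)}{y-z}\in L^2$; explicitly $v_z(\mu)=i\int_0^\infty e^{iz\sigma}\widehat{u_0}(\mu+\sigma)\,d\sigma$, whence $\|v_z\|_{L^\infty}\le(2\Im z)^{-1/2}\|\widehat{u_0}\|_{L^2}$. Writing $\tfrac{u_0^{n+1}}{(y-z)^{n+1}}=\big(\tfrac{u_0}{\cdot-z}\big)^{n+1}$ and taking the Fourier transform at $0$ identifies the right-hand side of $(\star)$ with $\tfrac{1}{2i\pi(n+1)(2\pi)^n}\int_{\mathbb{R}^n}v_z(y_1)v_z(y_2-y_1)\cdots v_z(y_n-y_{n-1})v_z(-y_n)\,dy$, which by Lemma~\ref{lemma 3.2} (applied to $f=v_z$) equals $\tfrac{1}{2i\pi(2\pi)^n}\int_{\{y_j>0\ \forall j\}}v_z(y_1)\cdots v_z(-y_n)\,dy$. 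On the other hand, unrolling $\big[(T_{u_0}R_z)^n\Pi u_0\big](z)$ on the Fourier side using $\widehat{T_{u_0}h}=\mathbf{1}_{\xi\ge0}\tfrac{1}{2\pi}\widehat{u_0}*\widehat h$, $\widehat{R_zg}(\xi)=i\int_0^\infty e^{iz\sigma}\widehat g(\xi+\sigma)\,d\sigma$ and $\Pi h(z)=\tfrac{1}{2\pi}\int_0^\infty\widehat h(\xi)e^{iz\xi}\,d\xi$, and then performing a triangular (unit-Jacobian) change of variables that absorbs every exponential factor into one of the factors $\widehat{u_0}$, one finds that $\big[(T_{u_0}R_z)^n\Pi u_0\big](z)$ equals $\tfrac{1}{2i\pi(2\pi)^n}$ times the same integrand $v_z(y_1)v_z(y_2-y_1)\cdots v_z(-y_n)$, but integrated over $A_n=\{y_j<0\ \forall j\}$ in the notation of the proof of Lemma~\ref{lemma 3.2}; by (\ref{3.07}) with $j=n$ this equals the integral over $\{y_j>0\ \forall j\}$, which yields $(\star)$.

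Summing $(\star)$ over $n$ with weights $(-2t)^n$ then gives (\ref{3.80}) on the region, and the first step completes the proof. The main obstacle is the third step: organising the Fourier-side unrolling of $(T_{u_0}R_z)^n$ and the change of variables matching the resulting $(2n+1)$-fold integral with the convolution-type integral of Lemma~\ref{lemma 3.2}—the conceptual content being that the Szeg\H{o} projections inside $(T_{u_0}R_z)^n$ cut the Fourier integral down to exactly the ``positive cone'' of Lemma~\ref{lemma 3.2}, so that the combinatorial factor $n+1$ supplied by that lemma is precisely what reconstitutes the logarithm. The Fubini manipulations are cleanest if $(\star)$ is first proved for $u_0\in C_c^\infty(\mathbb{R};\mathbb{R})$ (then $\tfrac{u_0}{\cdot-z}\in C_c^\infty$, so $v_z\in\mathcal{S}$ and all multiple integrals converge absolutely) and then extended by density in $L^2$, using the uniform bound of Lemma~\ref{lemma 3.001} (and Remark~\ref{remark 3.02}) and the continuity of $u_0\mapsto\big[(T_{u_0}R_z)^n\Pi u_0\big](z)$ for the left-hand side, and a uniform-integrability argument for the right-hand side.
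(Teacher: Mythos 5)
Your proposal is correct and follows essentially the same route as the paper: reduce the left-hand side to $\bigl[(\mathrm{Id}+2tT_{u_0}(G-z\mathrm{Id})^{-1})^{-1}\Pi u_0\bigr](z)$, expand in a Neumann series for $\Im z$ large, identify the $n$-th term with $\tfrac{1}{2i\pi(n+1)}\int_{\mathbb{R}}f_z^{n+1}$ via the combinatorial identity of Lemma \ref{lemma 3.2} on the Fourier side, resum into the logarithm, and conclude by holomorphy and the identity theorem. The only (cosmetic) difference is that the paper establishes the term-by-term identity by an induction using the resolvent formula (\ref{1.20}) to rewrite $\bigl[(T_{u_0}(G-z\mathrm{Id})^{-1})^{n-1}\Pi u_0\bigr](z)$ as $\tfrac{1}{2i\pi}\int f_z\,T_{f_z}^{n-2}\Pi f_z$ before passing to the Fourier side, whereas you unroll the whole expression on the Fourier side directly; both reduce to the same cone integral handled by (\ref{3.07}).
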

\begin{proof}
By applying (\ref{1.30}), for any $z \in \mathbb{C}_{+}$, we have
\begin{equation}
\label{3.08}
\begin{aligned}
\frac{1}{2 i \pi} I_{+}\left(\left(G+2 t T_{u_{0}}-z \mathrm{Id}\right)^{-1} \Pi u_{0}\right) & = \frac{1}{2 i \pi} I_{+}
\left(\left(G-zId\right)^{-1} \left(Id + 2t T_{u_0} \left(G-zId\right)^{-1}\right)^{-1}\Pi u_0\right) \\ & = \left[\left(Id + 2t T_{u_0} \left(G-zId\right)^{-1}\right)^{-1}\Pi u_0 \right](z).
\end{aligned}
\end{equation}
Then we only need to show that
\begin{equation}
\forall z \in \mathbb{C}_{+}, \quad  \left[\left(Id + 2t T_{u_0} \left(G-zId\right)^{-1}\right)^{-1}\Pi u_0\right](z) = \frac{1}{4i\pi t}\int_{\mathbb{R}} {\rm Log }\left(1 + \frac{2t u_0(y)}{y-z}\right) dy.
\end{equation}
Since $-2itT_{u_0}$ is infinitesimally small with respect to $G$, we have
\begin{align*}
\left\|2t T_{u_0} (G-zId)^{-1}\right\|_{\mathscr{L}\left(L_{+}^{2}\right)} & \leq \varepsilon \left\|G(G-zId)^{-1}\right\|_{\mathscr{L}\left(L_{+}^{2}\right)} + C_{\varepsilon} \left\|(G-zId)^{-1}\right\|_{\mathscr{L}\left(L_{+}^{2}\right)} \\ & \leq \varepsilon+\frac{C_{\varepsilon}}{\Im(z)}.
\end{align*}
The last inequality above comes from (\ref{1.90}).\\\\
Then we choose $\varepsilon = \frac{1}{4}$ and $z \in i\mathbb{R}_{>0}$ such that $C_{\frac{1}{4}}/\Im(z) < \frac{1}{4}$, and we have
\begin{align*}
\left\|2t T_{u_0} (G-zId)^{-1}\right\|_{\mathscr{L}\left(L_{+}^{2}\right)}< \frac{1}{2}.
\end{align*}
Thus, we can expand $\left(Id + 2t T_{u_0} (G-zId)^{-1}\right)^{-1}$ as a Neumann series for all such $z$. We have
\begin{equation}
\label{3.09}
\left[\left(Id + 2t T_{u_0}(G-zId)^{-1}\right)^{-1} \Pi u_0\right] (z) = \sum_{n=1}^{\infty}(-2t)^{n-1} \left[\left(T_{u_0}\left(G-zId\right)^{-1}\right)^{n-1} \Pi u_0\right](z).
\end{equation}
We recall the formula (\ref{1.10}) for $\Pi u_0(z)$,
\begin{equation}
\label{3.091}
\Pi u_0 (z)=\frac{1}{2i\pi}\int_{\mathbb{R}} \frac{u_0(y)}{y-z} dy,
\end{equation}
which is the case of $n = 1$.\\\\
When $n \geq 2$, we are going to prove
\begin{equation}
\label{1.60}
\left[\left(T_{u_0}\left(G-zId\right)^{-1}\right)^{n-1} \Pi u_0 \right](z) = \frac{1}{2i \pi} \int_{\mathbb{R}} f_z(y) T_{f_z}^{n-2}\Pi f_z(y)dy
\end{equation}
with
\begin{align*}
f_z(y) : = \frac{u_0(y)}{y-z}.
\end{align*}
We now adapt the mathematical induction to deduce (\ref{1.60}). When $n=2$, by applying (\ref{1.10}) and (\ref{1.20}), we have
\begin{align*}
\left[(G-zId)^{-1} \Pi u_0\right](x) & = \frac{\Pi u_0(x)- \Pi u_0 (z)}{x-z} \\ & = \frac{1}{2i \pi(x-z) } \left(\lim_{\delta> 0, \delta \to 0 }\int_{\mathbb{R}} \frac{u_0(y)}{y-x-i\delta} dy - \int_{\mathbb{R}} \frac{u_0(y)}{y-z} dy \right)  \\ & =\frac{1}{2i \pi }\lim_{\delta> 0, \delta \to 0 } \int_{\mathbb{R}} \frac{u_0(y)}{(y-x-i\delta)(y-z)} dy  \\ & = \Pi f_z(x).
\end{align*}
Thus we have
\begin{align*}
\left[T_{u_0}\left(G-zId\right)^{-1} \Pi u_0\right] (z) = \left[T_{u_0} \Pi f_z\right] (z) = \frac{1}{2i \pi}\int_{\mathbb{R}} f_z(y) \Pi f_z(y) dy,
\end{align*}
which yields (\ref{1.60}) with $n = 2$. \\\\
Then we suppose that (\ref{1.60}) holds for $n = k (k \geq 2)$. For $n = k+1$, we have
\begin{align*}
\left[\left(T_{u_0}\left(G-zId\right)^{-1}\right)^{k}\Pi u_0\right](z) = \left[T_{u_0}\left(G-zId\right)^{-1} \left(T_{u_0}\left(G-zId\right)^{-1}\right)^{k-1} \Pi u_0\right](z).
\end{align*}
We note
\begin{align*}
g_{k}(z) := \left[\left(T_{u_0}\left(G-zId\right)^{-1}\right)^{k-1} \Pi u_0\right](z),
\end{align*}
by the assumption, we have
\begin{align*}
g_k (z) =  \frac{1}{2i\pi}\int_{\mathbb{R}} f_z(y) T_{f_z}^{k-2}\Pi f_z(y)dy.
\end{align*}
Then we have
\begin{align*}
&\left(G-zId\right)^{-1} g_k (x)\\ = & \, \frac{g_k(x) -  g_k (z)}{x-z} \\ = & \, \frac{1}{2i\pi (x-z)} \left(\lim_{\delta> 0, \delta \to 0 } \int_{\mathbb{R}} \frac{u_0(y)}{y-x-i\delta} T_{f_z}^{k-2}\Pi f_z(y)dy  - \frac{1}{2i\pi} \int_{\mathbb{R}} \frac{u_0(y)}{y-z} T_{f_z}^{k-2}\Pi f_z(y)dy \right)  \\ = & \, \frac{1}{2i\pi} \lim_{\delta> 0, \delta \to 0 } \int_{\mathbb{R}} \frac{u_0(y)}{(y-x-i\delta)(y-z)} T_{f_z}^{k-2}\Pi f_z(y)dy \\ = & \, T_{f_z}^{k-1}\Pi f_z(x).
\end{align*}
Thus we have
\begin{align*}
\left[\left(T_{u_0}\left(G-zId\right)^{-1}\right)^{k}\Pi u_0\right](z) = \left[T_{u_0}  T_{f_z}^{k-1}\Pi f_z\right](z) = \frac{1}{2i \pi} \int_{\mathbb{R}} f_z(y) T_{f_z}^{k-1}\Pi f_z(y)dy,
\end{align*}
which yields (\ref{1.60}) with $n = k+1$. By the induction, we complete the proof of (\ref{1.60}).\\\\
In fact, we can easily observe that $f_z \in L^1(\mathbb{R}) \cap L^2(\mathbb{R})$, so $\widehat{f}_z \in L^2(\mathbb{R}) \cap L^{\infty}(\mathbb{R})$. Then for $n\geq 2$, by Lemma \ref{lemma 3.2}, we have
\begin{equation}
\label{3.130}
\begin{aligned}
& \int_{\mathbb{R}} f_z(y) T_{f_z}^{n-2}\Pi f_z(y)dy \\= & \, \mathcal{F}_{y \to \eta} \left(f_z T_{f_z}^{n-2}\Pi f_z\right)(0)\\ = & \, \int_{\{\forall 1\leq j \leq n-1, \eta_{j} > 0\}} \widehat{f_z}(\eta_1)\widehat{f_z}(\eta_2-\eta_1)...\widehat{f_z}(\eta_{n-1}-\eta_{n-2}) \widehat{f_z}(-\eta_{n-1})d\eta_1 d\eta_2...d\eta_{n-1}\\ = & \, \frac{1}{n}\int_{\mathbb{R}^{n-1}} \widehat{f_z}(\eta_1)\widehat{f_z}(\eta_2-\eta_1)...\widehat{f_z}(\eta_{n-1}-\eta_{n-2}) \widehat{f_z}(-\eta_{n-1})d\eta_1 d\eta_2...d\eta_{n-1}\\= & \, \frac{1}{n}\mathcal{F}_{y \to \eta} \left(f_z^n\right)(0)  \\ = & \, \frac{1}{n}\int_{\mathbb{R}} f_z^n (y) dy  .
\end{aligned}
\end{equation}
For $t\in \mathbb{R}$ fixed, since $u_0$ satisfies $\lim_{x \to \infty} \frac{|u_0(x)|}{|x|} = 0$, then for any $\varepsilon > 0$, there exists $R_{\varepsilon} > 0$ such that
\begin{align*}
\frac{2|t||u_0(x)|}{|x|} < \varepsilon \quad \text{ for all } \quad  |x| \geq R_{\varepsilon}.
\end{align*}
Also, since $u_0 \in L_{loc}^{\infty}(\mathbb{R})$, there exists $M_{\varepsilon} > 0$ such that
\begin{align*}
2|t|\|u_0\|_{L^\infty(|x| < R_{\varepsilon})} \leq M_{\varepsilon}.
\end{align*}
We fix $\varepsilon = \frac{1}{4}$, and take $z \in i \mathbb{R}_{>0}$ such that $\Im(z) > 4 M_{\frac{1}{4}}$, and then we have
\begin{equation}
\label{3.120}
2|t|\left\|f_z\right\|_{L^{\infty}} < \frac{1}{2}.
\end{equation}
Thus, for $z \in i\mathbb{R}_{>0}$ with $\Im(z)$ large enough, combining (\ref{3.09}), (\ref{3.091}), (\ref{1.60}), (\ref{3.130}) and (\ref{3.120}), we can deduce that
\begin{align*}
& \frac{1}{2 i \pi} I_{+}\left(\left(G+2 t T_{u_{0}}-z \mathrm{Id}\right)^{-1} \Pi u_{0}\right) \\ = & \, \sum_{n=1}^{\infty}(-2t)^{n-1} \left[\left(T_{u_0}\left(G-zId\right)^{-1}\right)^{n-1} \Pi u_0\right](z) \\ = & \, \frac{1}{4i\pi t} \int_{\mathbb{R}} \sum_{n=1}^{\infty} \frac{(-1)^{n-1}}{n} (2tf_z(y))^{n} dy \\ = & \,  \frac{1}{4i\pi t}  \int_{\mathbb{R}}  {\rm Log }(1+2tf_z(y))dy \\ = & \,  \frac{1}{4i\pi t}  \int_{\mathbb{R}}  {\rm Log }(1+\frac{2t u_0}{y-z})dy,
\end{align*}
which implies (\ref{3.80}) for $z \in i\mathbb{R}_{>0}$ with $\Im(z)$ large enough. By Remark \ref{remark 1.70} and Remark \ref{remark 3.2}, we know that the functions (with respect to $z$) on both sides of (\ref{3.80}) are holomorphic in $\mathbb{C}_{+}$, then from the isolated zeros theorem, we can deduce (\ref{3.80}) on the whole upper half-plane $\mathbb{C}_{+}$. The proof is complete.
\end{proof}
\begin{remark}
In fact, (\ref{3.130}) implies that, for every $f \in L^1(\mathbb{R}) \cap L^2(\mathbb{R})$ and for every $n\geq 2$,
\begin{equation}
\label{3.17}
\int_{\mathbb{R}} f(y) T_{f}^{n-2} \Pi f(y) d y = \frac{1}{n} \int_{\mathbb{R}} f^n(y) d y,
\end{equation}
so we have obtained an integral equality (\ref{3.17}) related to the Toeplitz operator $T_{f}$, which is derived from (\ref{3.06}). 
\end{remark}
Combining Lemma \ref{lemma 3.01} and Lemma \ref{lemma 3.3}, we give the following proof of Theorem \ref{theorem 1.3}.\\\\
\textit{Proof of Theorem \ref{theorem 1.3}}. We consider the equation (\ref{1.02}) with $u_0 \in L_r^2(\mathbb{R}) \cap L_{loc}^{\infty} (\mathbb{R})$ satisfying $\lim_{x \to \infty} \frac{|u_0(x)|}{|x|} = 0$. By the $L^2$ conservation law for (\ref{1.02}), we know that
\begin{align*}
\forall t \in \mathbb{R}, \quad \left\|u^{\varepsilon}(t)\right\|_{L^{2}}=\left\|u_{0}\right\|_{L^{2}}.
\end{align*}
Consequently, the family $u^{\varepsilon}(t)$ has weak limits in $L^2(\mathbb{R})$ as $\varepsilon \to 0$. Our task therefore consists in proving that there is only one such weak limit $w_{t}$ . Since $u^{\varepsilon}$ is real valued, so is $w_{t}$, hence $w_{t}=\Pi w_{t}+\overline{\Pi w_{t}}$ on the real line. In Lemma \ref{lemma 3.3}, we have already shown the second equality in (\ref{1.4}), we are therefore reduced to proving the identity
\begin{equation}
\label{3.05}
\forall z \in \mathbb{C}_{+}, \quad \Pi w_{t}(z) =\frac{1}{2 i \pi} I_{+}\left(\left(G+2 t T_{u_{0}}-z \mathrm{Id}\right)^{-1} \Pi u_{0}\right),
\end{equation}
since this identity clearly characterises $w_t$.\\\\
We then recall the formula (\ref{3.01}) for $\Pi u^{\varepsilon}(t,z)$,
\begin{equation}
\Pi u^{\varepsilon}(t,z)=\frac{1}{2i\pi}I_{+}\left(\left(G-2\varepsilon tD+2 t T_{u_{0}} -z \mathrm{Id}\right)^{-1}  \Pi u_{0}\right).
\end{equation}
We define for $\varepsilon \geq 0$,
\begin{align*}
A_{z}^\varepsilon (b): = \left(G-2\varepsilon tD+2 t T_{b} -z \mathrm{Id}\right)^{-1}.
\end{align*}
We have
\begin{align*}
& \Pi u^{\varepsilon}(t,z)- \frac{1}{2i\pi}I_{+}(A_{z}^0 (u_0) \Pi u_0 )\\ = & \, \frac{1}{2i\pi}I_{+}(A_{z}^\varepsilon (u_0) \Pi u_0)-\frac{1}{2i\pi}I_{+}(A_{z}^0 (u_0) \Pi u_0)\\ = & \, \frac{1}{2i\pi} I_{+}\left(A_z^\varepsilon (u_0) (1-\psi(\sqrt{\varepsilon}D)) \Pi u_0 \right) + \frac{1}{2i\pi} I_{+}\left((A_z^\varepsilon (u_0)-A_z^0(u_0))\psi(\sqrt{\varepsilon}D)\Pi u_0\right) \\ - & \, \frac{1}{2i\pi} I_{+}\left(A_z^0(u_0)(1-\psi(\sqrt{\varepsilon}D)) \Pi u_0\right).
\end{align*}
where $\psi \in \mathcal{S}(\mathbb{R})$ with
\begin{align*}
\phi : = \mathcal{F}^{-1}\psi \geq 0, \quad \int_{\mathbb{R}} \phi(x)dx =1 \text{ and } supp\left(\phi\right) \subset \left[-1,1\right].
\end{align*}
In fact, $\phi$ is an approximation identity.\\\\
By Lemma \ref{lemma a.1}, we know that $\|\left(1-\psi(\sqrt{\varepsilon} D)\right) \Pi u_0 \|_{L^2}\underset{\varepsilon\to 0_{+}}{\longrightarrow}$ 0. Then from Lemma \ref{lemma 3.001} and Remark \ref{remark 3.02} (taking $t = \varepsilon t$ and $u_0 = \frac{u_0}{\varepsilon}$ in $(G-2tL_{u_0}-zId)^{-1}$), we know that
\begin{equation}
\label{3.231}
\frac{1}{2i\pi} I_{+}\left(A_z^\varepsilon(u_0) (1-\psi(\sqrt{\varepsilon}D) \Pi u_0  \right) \leq C(z)\|(1-\psi(\sqrt{\varepsilon}D) \Pi u_0\|_{L^2} \underset{\varepsilon \to 0_{+}}{\longrightarrow} 0,
\end{equation}
From Lemma \ref{lemma 3.01}, we can also deduce that
\begin{equation}
\label{3.241}
\frac{1}{2i\pi} I_{+}\left(A_z^0(u_0) (1-\psi(\sqrt{\varepsilon}D) \Pi u_0  \right) \leq C(z)\|(1-\psi(\sqrt{\varepsilon}D) \Pi u_0\|_{L^2} \underset{\varepsilon \to 0_{+}}{\longrightarrow} 0.
\end{equation}
So we only need to analyse the second term above. In fact, from the  second resolvent identity, we have
\begin{align*}
& \frac{1}{2i\pi} I_{+}\left((A_z^\varepsilon (u_0)-A_z^0(u_0))\psi(\sqrt{\varepsilon}D) \Pi u_0\right) \\ = \, &  \frac{1}{2i\pi} I_{+}\left((A_z^\varepsilon (u_0)-A_z^0(\psi(\sqrt{\varepsilon}D)u_0))\psi(\sqrt{\varepsilon}D) \Pi u_0\right) + \frac{1}{2i\pi} I_{+}\left((A_z^0(\psi(\sqrt{\varepsilon}D)u_0)-A_z^0(u_0))\psi(\sqrt{\varepsilon}D)\Pi u_0\right) \\ = \,& \frac{1}{2i\pi} I_{+}\left(A_z^\varepsilon (u_0)\left(2\varepsilon t D + (2tT_{\psi(\sqrt{\varepsilon}D)u_0} - 2tT_{u_0})\right)A_z^0(\psi(\sqrt{\varepsilon}D)u_0)\psi(\sqrt{\varepsilon}D) \Pi u_0\right)\\ + \, & \frac{1}{2i\pi} I_{+}\left(A_z^0 (\psi(\sqrt{\varepsilon}D)u_0)\left(2t T_{u_0}-2tT_{\psi(\sqrt{\varepsilon}D)u_0} \right)A_z^0(u_0)\psi(\sqrt{\varepsilon}D) \Pi u_0\right).
\end{align*}
Now we analyse the function $\psi(\sqrt{\varepsilon} D) u_0$. In fact, we have
\begin{align*}
\psi(\sqrt{\varepsilon} D) u_0(x)=\int_{|y| \leq 1} u_0(x-\sqrt{\varepsilon} y) \phi(y) d y.
\end{align*}
We observe that, for $0<\varepsilon <1$ and $|x| \leq M (M \geq 2)$,
\begin{equation}
\label{3.23}
\|\psi(\sqrt{\varepsilon} D) u_0\|_{L^\infty(|x|\leq M)} \leq \left\|\int_{|y|\leq 1} u_0(x-\sqrt{\varepsilon}y) \phi(y) dy\right\|_{L^\infty(|x|\leq M)}  \leq \|u_0(x)\|_{L^\infty(|x|\leq M+1)}.
\end{equation}
Also, for $0<\varepsilon <1$ and $|x| > M (M \geq 2)$, we have
\begin{equation}
\label{3.24}
\begin{aligned}
\sup_{|x|\geq M}\frac{|\psi(\sqrt{\varepsilon} D) u_0(x)|}{|x|} & \leq \sup_{|x|\geq M}\frac{\int_{|y|\leq 1} |u_0(x-\sqrt{\varepsilon}y)| \phi(y) dy}{|x|}  \\ & = \sup_{|x|\geq M}\frac{\int_{|y|\leq 1} |u_0(x-\sqrt{\varepsilon}y)| \phi(y) dy}{|x-\sqrt{\varepsilon}y|} \frac{|x-\sqrt{\varepsilon}y|}{|x|} \\ & \leq 2\sup_{|x|\geq M-1}\frac{|u_0(x)|}{|x|} \underset{M\to +\infty}{\longrightarrow} 0.
\end{aligned}
\end{equation}
Based on the above facts, we observe that the bound of $\psi(\sqrt{\varepsilon} D) u_0 \in L_{loc}^\infty(\mathbb{R})$ and the convergence of $\frac{|\psi(\sqrt{\varepsilon} D) u_0|}{|x|} \underset{x \to \infty}{\longrightarrow} 0$ hold uniformly in $\varepsilon$. Thus, from the proof of Lemma \ref{lemma 3.01}, we can deduce that
\begin{equation}
\label{3.25}
\|A_z^0\left(\psi(\sqrt{\varepsilon} D) u_0\right) f\|_{L^2(\mathbb{R})} \leq C(z) \|f\|_{L^2(\mathbb{R})}, \quad \forall f \in L_{+}^2(\mathbb{R})
\end{equation}
and
\begin{equation}
\label{3.26}
\|G A_z^0\left(\psi(\sqrt{\varepsilon} D) u_0\right) f\|_{L^2(\mathbb{R})} \leq C(z) \|f\|_{L^2(\mathbb{R})}, \quad \forall f \in L_{+}^2(\mathbb{R}),
\end{equation}
where $C(z)$ is a constant which depends on $z$ (not on $\varepsilon$).\\\\
Now we want to show that 
\begin{equation}
\label{3.2701}
 h_{\varepsilon}: = A_z^0\left(\psi(\sqrt{\varepsilon} D) u_0\right) \psi(\sqrt{\varepsilon} D) \Pi u_0 \underset{\varepsilon \to 0_{+}}{\longrightarrow} h_0: = A_z^0\left(u_0\right)  \Pi u_0  \text{ in } L_{+}^2(\mathbb{R}).
\end{equation}
In fact, we have
\begin{align*}
& \quad A_z^0\left(\psi(\sqrt{\varepsilon} D) u_0\right) \psi(\sqrt{\varepsilon} D) \Pi u_0  -A_z^0\left(u_0\right)  \Pi u_0 \\ & =A_z^0\left(\psi(\sqrt{\varepsilon} D) u_0\right)\left(\psi(\sqrt{\varepsilon} D) -1\right)\Pi u_0 + \left(A_z^0\left(\psi(\sqrt{\varepsilon} D) u_0\right)-A_z^0\left(u_0\right)\right)\Pi u_0.
\end{align*}
Combine (\ref{3.25}) and Lemma \ref{lemma a.1}, we can deduce that
\begin{align*}
\|A_z^0\left(\psi(\sqrt{\varepsilon} D) u_0\right)\left(\psi(\sqrt{\varepsilon} D) -1\right)\Pi u_0\|_{L^2(\mathbb{R})} \underset{\varepsilon \to 0_{+}}{\longrightarrow} 0.
\end{align*}
From the second resolvent identity and \eqref{3.25}, we have
\begin{align*}
& \quad \left\| \left(A_z^0\left(\psi(\sqrt{\varepsilon} D) u_0\right)-A_z^0\left(u_0\right)\right)\Pi u_0 \right\|_{L^2(\mathbb{R})} \\ & = 2|t|\left\|A_z^0\left(\psi(\sqrt{\varepsilon} D) u_0\right)(T_{u_0} - T_{\psi(\sqrt{\varepsilon} D) u_0})A_z^0\left(u_0\right)\Pi u_0 \right\|_{L^2(\mathbb{R})} \\ & \leq C(t,z) \left\|(T_{u_0} - T_{\psi(\sqrt{\varepsilon} D) u_0})h_0\right\|_{L^2(\mathbb{R})}.
\end{align*}
To show \eqref{3.2701}, we only need to prove $\left\|(T_{u_0} - T_{\psi(\sqrt{\varepsilon} D) u_0})h_0\right\|_{L^2(\mathbb{R})} \underset{\varepsilon \to 0_{+}}{\longrightarrow} 0$.\\\\
From the assumptions of $\frac{|u_0(x)|}{|x|} \underset{x \to \infty}{\longrightarrow}0$ and \eqref{3.24}, we know that, for any $\delta >0$, there exists $M_{\delta} > 0$ such that for every $\varepsilon \geq 0$,
\begin{align*}
\frac{|\psi(\sqrt{\varepsilon} D) u_0(x)|}{|x|} \leq \delta, \quad \text{ for } |x| \geq M_{\delta}.
\end{align*}
From the proof of Lemma \ref{lemma 3.01}, we infer
\begin{align*}
& \quad \|(T_{u_0}-T_{\psi(\sqrt{\varepsilon} D) u_0}) h_0\|_{L^2(\mathbb{R})} \\ & \leq \left\|(1-\psi(\sqrt{\varepsilon} D))u_0 h_0\right\|_{L^2\left(|x|<M_{\delta}\right)} \\ &+ \left\|\frac{(1-\psi(\sqrt{\varepsilon} D))u_0(x)}{x}\right\|_{L_x^\infty(|x|\geq M_{\delta})}\left\|x\left(h_0-I_{+}(h_0) g\right)\right\|_{L^2\left(|x| \geq M_{\delta}\right)}\\ &+\left\|(1-\psi(\sqrt{\varepsilon} D))u_0\left(I_{+}(h_0) g\right)\right\|_{L^2(\mathbb{R})} \\ &\leq \left\|(1-\psi(\sqrt{\varepsilon} D))u_0 h_0\right\|_{L^2\left(|x|<M_{\delta}\right)} \\ & + (C\delta+ C \|1-\psi(\sqrt{\varepsilon} D))u_0\|_{L^2(\mathbb{R})})(\|h_0\|_{L^2(\mathbb{R})}  +\|Gh_0\|_{L^2(\mathbb{R})}),
\end{align*}
where
\begin{align*}
\widehat{g}(\xi) : = \mathbf{1}_{\xi \geq 0} \, \mathrm{e}^{-\xi}.
\end{align*}
From Lemma \ref{lemma a.2}, we know that
\begin{align*}
\left\|(1-\psi(\sqrt{\varepsilon} D))u_0 h_0\right\|_{L^2\left(|x|<M_{\delta}\right)} \underset{\varepsilon \to 0_{+}}{\longrightarrow} 0.
\end{align*}
So we can deduce that $\|(T_{u_0}-T_{\psi(\sqrt{\varepsilon} D) u_0}) h_0\|_{L^2(\mathbb{R})} \underset{\varepsilon \to 0_{+}}{\longrightarrow} 0$, and thus \eqref{3.2701} follows.\\
Now we have
\begin{align*}
& \quad \|(T_{u_0}-T_{\psi(\sqrt{\varepsilon} D) u_0}) A_z^0\left(\psi(\sqrt{\varepsilon} D) u_0\right) \psi(\sqrt{\varepsilon} D) \Pi u_0\|_{L^2(\mathbb{R})}\\ & = \|(T_{u_0}-T_{\psi(\sqrt{\varepsilon} D) u_0}) h_\varepsilon\|_{L^2(\mathbb{R})} \\ & \leq \left\|(1-\psi(\sqrt{\varepsilon} D))u_0 h_\varepsilon\right\|_{L^2\left(|x|<M_{\delta}\right)} \\ &+ \left\|\frac{(1-\psi(\sqrt{\varepsilon} D))u_0(x)}{x}\right\|_{L_x^\infty(|x|\geq M_{\delta})}\left\|x\left(h_0-I_{+}(h_\varepsilon) g\right)\right\|_{L^2\left(|x| \geq M_{\delta}\right)}\\ &+\left\|(1-\psi(\sqrt{\varepsilon} D))u_0\left(I_{+}(h_\varepsilon) g\right)\right\|_{L^2(\mathbb{R})} \\ &\leq \left\|(1-\psi(\sqrt{\varepsilon} D))u_0 h_\varepsilon\right\|_{L^2\left(|x|<M_{\delta}\right)} \\ & + (C\delta+ C \|1-\psi(\sqrt{\varepsilon} D))u_0\|_{L^2(\mathbb{R})})(\|h_\varepsilon\|_{L^2(\mathbb{R})}  +\|Gh_\varepsilon\|_{L^2(\mathbb{R})}).
\end{align*}
Since we have \eqref{3.2701}, from Corollary \ref{corollary a.3}, we infer
\begin{align*}
\left\|(1-\psi(\sqrt{\varepsilon} D))u_0 h_\varepsilon\right\|_{L^2\left(|x|<M_{\delta}\right)} \underset{\varepsilon \to 0_{+}}{\longrightarrow} 0.
\end{align*}
Then combine Lemma \ref{lemma 3.001}, Lemma \ref{lemma a.1}, (\ref{3.25}) and (\ref{3.26}), we can deduce that
\begin{equation}
\label{3.271}
 \frac{1}{2i\pi} I_{+}\left(A_z^\varepsilon (u_0)\left( 2tT_{\psi(\sqrt{\varepsilon}D)u_0} - 2tT_{u_0}\right)A_z^0(\psi(\sqrt{\varepsilon}D)u_0)\psi(\sqrt{\varepsilon}D) \Pi u_0\right) \underset{\varepsilon\to 0_{+}}{\longrightarrow} 0.
\end{equation}
Similarly, we can also infer 
\begin{equation}
\label{3.281}
\frac{1}{2i\pi} I_{+}\left(A_z^0 (\psi(\sqrt{\varepsilon}D)u_0)\left(2t T_{u_0}-2tT_{\psi(\sqrt{\varepsilon}D)u_0} \right)A_z^0(u_0)\psi(\sqrt{\varepsilon}D) \Pi u_0\right) \underset{\varepsilon\to 0_{+}}{\longrightarrow} 0.
\end{equation}
Now we only need to deal with
\begin{align*}
\frac{1}{2i\pi} I_{+}\left(A_z^\varepsilon (u_0)\left(2\varepsilon t D \right)A_z^0(\psi(\sqrt{\varepsilon}D)u_0)\psi(\sqrt{\varepsilon}D) \Pi u_0\right).
\end{align*}
We recall the formula
\begin{align*}
[B,A^{-1}] = -A^{-1} [B,A] A^{-1},
\end{align*}
so we have
\begin{align*}
& [2\varepsilon t D,A_z^0 (\psi(\sqrt{\varepsilon}D)u_0)] \psi(\sqrt{\varepsilon}D) \Pi u_0 \\  = &\, -A_z^0 (\psi(\sqrt{\varepsilon}D)u_0) [2\varepsilon t D, A_z^0 (\psi(\sqrt{\varepsilon}D)u_0)^{-1}]A_z^0 (\psi(\sqrt{\varepsilon}D)u_0) \psi(\sqrt{\varepsilon}D) \Pi u_0 \\  = &\,A_z^0 (\psi(\sqrt{\varepsilon}D)u_0) \left(2i\varepsilon t - 4\sqrt{\varepsilon} t^2 T_{\sqrt{\varepsilon}D\psi(\sqrt{\varepsilon}D)u_0 }\right)A_z^0 (\psi(\sqrt{\varepsilon}D)u_0) \psi(\sqrt{\varepsilon}D)\Pi u_0.
\end{align*}
In fact, we can show that the bound of $\sqrt{\varepsilon}D\psi(\sqrt{\varepsilon}D)u_0 \in L_{loc}^{\infty}(\mathbb{R})$ and the convergence of $\frac{|\sqrt{\varepsilon}D\psi(\sqrt{\varepsilon}D)u_0(x)|}{|x|} \underset{x\to \infty}{\longrightarrow} 0$ hold uniformly in $\varepsilon$. Similarly as before, we can deduce that 
\begin{equation}
\label{3.27}
\|[2\varepsilon t D, A_z^0 (\psi(\sqrt{\varepsilon}D)u_0)] \psi(\sqrt{\varepsilon}D) \Pi u_0\|_{L^2(\mathbb{R})} \underset{\varepsilon \to 0_{+}}{\longrightarrow} 0.
\end{equation}
Since 
\begin{align*}
\|\sqrt{\varepsilon}D \psi(\sqrt{\varepsilon}D) \Pi u_0)\|_{L^2(\mathbb{R})} \leq C \|\Pi u_0\|_{L^2(\mathbb{R})}
\end{align*}
with $C$ not depending on $\varepsilon$, we can also deduce that 
\begin{equation}
\label{3.28}
\left\|A_z^0 (\psi(\sqrt{\varepsilon}D)u_0) (2\varepsilon t D)\psi(\sqrt{\varepsilon}D) \Pi u_0\right\|_{L^2(\mathbb{R})} \underset{\varepsilon \to 0_{+}}{\longrightarrow} 0.
\end{equation}
Combine Lemma \ref{lemma 3.001}, (\ref{3.27}) and (\ref{3.28}), we infer
\begin{equation}
\label{3.31}
\frac{1}{2i\pi} I_{+}\left(A_z^\varepsilon (u_0)\left(2\varepsilon t D \right)A_z^0(\psi(\sqrt{\varepsilon}D)u_0)\psi(\sqrt{\varepsilon}D) \Pi u_0\right) \underset{\varepsilon \to 0_{+}}{\longrightarrow} 0.
\end{equation}
Thus from \eqref{3.231}, \eqref{3.241}, \eqref{3.271}, \eqref{3.281} and \eqref{3.31}, we conclude that, for every $z \in \mathbb{C}_{+}$,
\begin{align*}
\Pi u^{\varepsilon}(t,z)- \frac{1}{2i\pi}I_{+}(A_{z}^0 (u_0) \Pi u_0) \underset{\varepsilon \to 0_{+}}{\longrightarrow} 0.
\end{align*}
Also, from the weak convergence of $u^{\varepsilon}(t)$ to $w_{t}$ in $L^2(\mathbb{R})$, we have
\begin{align*}
\forall z \in \mathbb{C}_{+}, \quad \Pi u^\varepsilon(t,z)-\Pi w_{t}(z) = \int_{0}^{\infty} \mathrm{e}^{iz\xi} \left(\widehat{u^{\varepsilon}}(t,\xi)-\widehat{w}_t(\xi)\right) d\xi \underset{\varepsilon \to 0_{+}}{\longrightarrow} 0, 
\end{align*}
and thus (\ref{3.05}) follows. The proof is complete.
\begin{remark} In fact, for $u_0 \in L_r^2(\mathbb{R})$ with $|u_0(x)| \leq C \langle x\rangle$ and for $|t| < \frac{1}{2C}$, by applying the method in the proof of Lemma \ref{lemma 3.01}, we can deduce that $-2itT_{u_0}$ is $G$-bounded with the relative bound smaller than 1. Then by the Kato-Rellich theorem \ref{theorem 1.5}, we can conclude that $\left(\operatorname{Dom}(G),-i G-2 i t T_{u_0}\right)$ is maximally dissipative. With a slight modification of the proof of Lemma \ref{lemma 3.3}, we can also show (\ref{3.80}) for $u_0 \in L_r^2(\mathbb{R})$ with $|u_0(x)| \leq C \langle x\rangle$ in a short time range $|t| < \frac{1}{2C}$. Finally, by following the same approach used in the proof of Theorem \ref{theorem 1.3}, we can deduce Corollary \ref{corollary 1.4}.
\end{remark}
\section{Final comments and open problems}
\label{section 4}
Let us briefly give some comments related to the previous sections.\\\\
1. Recently, R. Killip, T. Laurens and M. Vişan have extended continuously the flow map of (\ref{0.1}) to $u_0 \in H_r^{s}(\mathbb{R})$ with $-\frac{1}{2} < s < 0$ \cite{7}. However, so far we have not been able to extend the explicit formula (\ref{0.2}) to $u_0 \in H_r^{s}(\mathbb{R})$ with $-\frac{1}{2} < s < 0$. In fact, we cannot apply directly the perturbation argument used in Section \ref{section 2} to this case, and we do not know if $\left(G-2 t L_{u_0}-z \mathrm{Id}\right)^{-1}$ exists on $H_{+}^s(\mathbb{R})$ with $-\frac{1}{2} < s < 0$. So far no other suitable approach has been found to give an explicit formula for the solution of (\ref{0.1}) in this case. We remark that we have also the global well-posedness of the Benjamin–Ono equation on the torus in $H_r^{s}(\mathbb{T})$ with $-\frac{1}{2} < s < 0$ \cite{8}\cite{7}, and the explicit formula for the Benjamin–Ono equation on the torus has been successfully extended to $u_0 \in H_r^{s}(\mathbb{T})$ with $-\frac{1}{2} < s < 0$ \cite{1}. \\\\
2. As explained in Remark \ref{remark 1.70}, we know that the expression
\begin{align*}
\frac{1}{4i\pi t}\int_{\mathbb{R}} {\rm Log }\left(1 + \frac{2t u_0(y)}{y-z}\right) dy
\end{align*}
makes sense if $u_0 \in L_r^2(\mathbb{R})$. Nevertheless, this does not imply that the zero dispersion limit exists in this case. For $u_0 \in L_{r}^2(\mathbb{R})$, let $u^{\varepsilon}$ be the corresponding solution to (\ref{1.02}) with the initial data $u_0$, and we take a sequence $u_0^n \in L_{r}^2(\mathbb{R})\cap L^{\infty}(\mathbb{R})$ which converges to $u_0$ in $L^2(\mathbb{R})$. In fact, from (\ref{1.4}) and Remark \ref{remark 1.70}, we know that
\begin{align*}
\lim_{n\to \infty} \lim_{\varepsilon \to 0}\Pi u_n^{\varepsilon} (t, z) = \lim_{n\to \infty} \frac{1}{4i\pi t}\int_{\mathbb{R}} {\rm Log }\left(1 + \frac{2t u_0^n(y)}{y-z}\right) dy = \frac{1}{4i\pi t}\int_{\mathbb{R}} {\rm Log }\left(1 + \frac{2t u_0(y)}{y-z}\right) dy,
\end{align*}
where $u_n^\varepsilon$ denotes the corresponding solution to (\ref{1.02}) with the initial data $u_0^n$.\\\\
To show the existence of the zero dispersion limit with the initial data $u_0 \in L_r^2(\mathbb{R})$, we only need to show that $\lim_{\varepsilon \to 0} \lim_{n \to \infty}\Pi u_n^{\varepsilon} (t, z)$ exists. A natural idea is to show that these two limits can be exchanged in order, which would then imply that 
\begin{align*}
\lim_{\varepsilon \to 0}\Pi u^{\varepsilon} (t, z)  =\lim_{\varepsilon \to 0} \lim_{n \to \infty}\Pi u_n^{\varepsilon} (t, z) = \lim_{n\to \infty} \lim_{\varepsilon \to 0}\Pi u_n^{\varepsilon} (t, z) = \frac{1}{4i\pi t}\int_{\mathbb{R}} {\rm Log }\left(1 + \frac{2t u_0(y)}{y-z}\right) dy.
\end{align*}
However, we lack certain uniform conditions for this double limit to prove the order exchangeability, so the existence for the zero dispersion limit with the initial data $u_0 \in L_r^2(\mathbb{R})$ is still unknown even in a short time.\\\\
Also, as observed in (\ref{4.03}), the condition 
\begin{align*}
u_0 \in L_r^2(\mathbb{R}) \text{ with } \left|u_{0}(x)\right| \leq C\langle x\rangle
\end{align*}
is a sufficient and necessary condition for \eqref{3.02}. With this condition we can only deduce the existence of the zero dispersion limit in a short time, and the existence of the zero dispersion limit in a long time is still unknown for the same reason explained in the previous paragraph. A natural idea to solve this difficulty is to apply the Kato-Rellich theorem to show that 
\begin{align*}
\left(G+2 t T_{u_{0}}-z \mathrm{Id}\right)^{-1}
\end{align*}
exists on $L_{+}^2(\mathbb{R})$ for every $z \in \mathbb{C}_{+}$, but the perturbation argument fails in a long time range since we cannot deduce that the relative bound of $-2itT_{u_0}$ with respect to $G$ is smaller than 1 for every $t \in \mathbb{R}$.\\\\
3. The zero dispersion limit for the Benjamin–Ono equation on the torus was studied by L. Gassot in \cite{14}\cite{15}. In \cite{15}, the explicit formula for the Benjamin–Ono equation on the torus established in \cite{1} was used to prove the existence of the zero dispersion limit for every initial datum in $L^{\infty}(\mathbb{T})$. The existence of the zero–dispersion limit for more singular initial data is still an open problem. As introduced in Remark \ref{remark 1.6}, L. Gassot has also obtained the formula (\ref{1.9}) in the special case of a general bell shaped initial datum in \cite{14}\cite{15}.
\begin{appendix}
\section{Appendix}
In the appendix, we recall the definition of $\psi \in \mathcal{S}(\mathbb{R})$ with
\begin{align*}
\phi : = \mathcal{F}^{-1}\psi \geq 0, \quad \int_{\mathbb{R}} \phi(x)dx =1 \text{ and } supp\left(\phi\right) \subset \left[-1,1\right].
\end{align*}
Here $\phi$ is an approximation identity. Then we introduce the following approximation result in $L^2(\mathbb{R})$.
\begin{lemma}
\label{lemma a.1}
Given $f \in L^2(\mathbb{R})$, we have
\begin{align*}
\left\|f-\psi(\varepsilon D) f\right\|_{L^2(\mathbb{R})} \underset{\varepsilon \to 0_{+}}{\longrightarrow} 0.
\end{align*}
\end{lemma}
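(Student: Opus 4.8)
The plan is to reduce the statement to two classical facts: that the symbol satisfies $\psi(0)=1$, and that translations act continuously on $L^{2}(\mathbb{R})$. The first is immediate from $\psi=\mathcal{F}\phi$ together with $\psi(0)=\int_{\mathbb{R}}\phi(x)\,dx=1$; this is exactly what makes $\psi(\varepsilon D)$ behave like an approximation of the identity as $\varepsilon\to 0_{+}$.

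First I would pass to the physical-space representation already used in the body of the paper, namely
\[
\psi(\varepsilon D)f(x)=\int_{|y|\le 1}f(x-\varepsilon y)\,\phi(y)\,dy .
\]
Since $\int_{\mathbb{R}}\phi(y)\,dy=1$ and $\phi$ is supported in $[-1,1]$, subtracting gives
\[
f(x)-\psi(\varepsilon D)f(x)=\int_{|y|\le 1}\bigl(f(x)-f(x-\varepsilon y)\bigr)\phi(y)\,dy .
\]
Applying Minkowski's integral inequality in $L^{2}_{x}(\mathbb{R})$ then yields
\[
\bigl\|f-\psi(\varepsilon D)f\bigr\|_{L^{2}(\mathbb{R})}\le \int_{|y|\le 1}\bigl\|f(\cdot)-f(\cdot-\varepsilon y)\bigr\|_{L^{2}(\mathbb{R})}\,\phi(y)\,dy .
\]

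Next I would invoke the continuity of translation on $L^{2}(\mathbb{R})$: for each fixed $y$, one has $\|f(\cdot)-f(\cdot-\varepsilon y)\|_{L^{2}(\mathbb{R})}\to 0$ as $\varepsilon\to 0_{+}$, and this quantity is bounded by $2\|f\|_{L^{2}(\mathbb{R})}$ uniformly in $\varepsilon$ and $y$. As $\phi\in L^{1}(\mathbb{R})$, the dominated convergence theorem applies to the last integral and produces the claim. There is no genuine obstacle here; the only point deserving care is the choice of representation of the multiplier $\psi(\varepsilon D)$, i.e.\ making sure the scaling of $\phi$ matches the Fourier convention fixed for the Riesz--Szegő projector in \eqref{1.10}, so that $\int_{\mathbb{R}}\phi=1$ indeed forces $\psi(0)=1$. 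An entirely equivalent route bypasses physical space: by Plancherel, $\|f-\psi(\varepsilon D)f\|_{L^{2}(\mathbb{R})}^{2}=\int_{\mathbb{R}}|1-\psi(\varepsilon\xi)|^{2}\,|\hat{f}(\xi)|^{2}\,d\xi$, and since $\psi$ is continuous with $\psi(0)=1$ and bounded (being Schwartz), the integrand tends to $0$ pointwise while staying dominated by $(1+\|\psi\|_{L^{\infty}})^{2}|\hat{f}|^{2}\in L^{1}(\mathbb{R})$; dominated convergence again concludes. I would keep whichever of the two formulations is most consistent with the conventions already in force in the paper.
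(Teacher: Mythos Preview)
Your proof is correct and follows essentially the same approach as the paper: write $\psi(\varepsilon D)f$ as a convolution against the compactly supported $\phi$, subtract, and reduce to the continuity of translations in $L^{2}(\mathbb{R})$. The only cosmetic difference is that the paper uses Cauchy--Schwarz in $y$ followed by a supremum over $|y|\le 1$, whereas you use Minkowski's integral inequality and dominated convergence; both are standard and equivalent here.
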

\begin{proof}
we know that
\begin{align*}
\psi(\varepsilon D) f (x)= \int_{|y|\leq 1} f(x-\varepsilon y) \phi(y) dy,
\end{align*}
so
\begin{align*}
& \quad \left\|f-\psi(\varepsilon D) f\right\|_{L^2(\mathbb{R})}^2 \\ &  = \int_{\mathbb{R}} \left|f(x)-\psi(\varepsilon D) f (x)\right|^2 dx \\ & \leq \int_{\mathbb{R}} \left(\int_{|y|\leq 1} \left|f(x)-f(x-\varepsilon y)\right|\phi(y) dy\right)^2 dx \\ & \leq \int_{|y|\leq 1} \phi(y)^2 dy \int_{\mathbb{R}} \int_{|y|\leq 1} \left|f(x)-f(x-\varepsilon y)\right|^2 dy dx \\ & \leq C  \int_{|y|\leq 1}\int_{\mathbb{R}} \left|f(x)-f(x-\varepsilon y)\right|^2 dx dy \\ & \leq C  \sup_{|y|\leq 1}\int_{\mathbb{R}} \left|f(x)-f(x-\varepsilon y)\right|^2 dx \underset{\varepsilon \to 0_{+}}{\longrightarrow} 0.
\end{align*}
The second inequality above comes from the Cauchy-Schwarz inequality, and the last inequality above comes from the continuity of translations in $L^2(\mathbb{R})$.
\end{proof}
Also, we have the following approximation result. 
\begin{lemma}
\label{lemma a.2}
Given $g \in L_{loc}^{\infty}(\mathbb{R})\cap L^2(\mathbb{R})$ and $f \in L^2(\mathbb{R})$, then for every $M >0$, we have 
\begin{equation}
\label{A.1}
\left\|(1-\psi(\varepsilon D))g f\right\|_{L^2(|x|<M)} \underset{\varepsilon \to 0_{+}}{\longrightarrow} 0.
\end{equation}
\end{lemma}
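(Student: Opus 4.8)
The plan is to reduce \eqref{A.1} to the already established Lemma \ref{lemma a.1} by first localizing $g$ and then truncating $f$. Note that for $0 < \varepsilon < 1$ and $|x| < M$ the value $\psi(\varepsilon D) g(x) = \int_{|y| \le 1} g(x - \varepsilon y)\phi(y)\,dy$ depends only on the restriction of $g$ to $\{|x| < M+1\}$, since $|x - \varepsilon y| < M+1$ there. Hence, setting $g_M := g\,\mathbf{1}_{\{|x| < M+1\}}$, we have $(1-\psi(\varepsilon D))g = (1-\psi(\varepsilon D))g_M$ on $\{|x|<M\}$ for all such $\varepsilon$, so that $\|(1-\psi(\varepsilon D))g \, f\|_{L^2(|x|<M)} \le \|h_\varepsilon f\|_{L^2(\mathbb{R})}$ with $h_\varepsilon := (1-\psi(\varepsilon D))g_M$, and it is enough to show $\|h_\varepsilon f\|_{L^2(\mathbb{R})} \to 0$. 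Because $g \in L_{loc}^{\infty}(\mathbb{R})$, the function $g_M$ lies in $L^{\infty}(\mathbb{R}) \cap L^2(\mathbb{R})$; in particular $\|\psi(\varepsilon D) g_M\|_{L^{\infty}} \le \|g_M\|_{L^{\infty}}\|\phi\|_{L^1(\mathbb{R})} = \|g_M\|_{L^{\infty}}$, so $\|h_\varepsilon\|_{L^{\infty}} \le 2\|g_M\|_{L^{\infty}}$ uniformly in $\varepsilon$, while Lemma \ref{lemma a.1} gives $\|h_\varepsilon\|_{L^2(\mathbb{R})} \to 0$ as $\varepsilon \to 0_{+}$.

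The product $h_\varepsilon f$ cannot be controlled by a single estimate, since the product of two $L^2$ functions is merely $L^1$; this is handled by truncating $f$. For $N > 0$, set $f_N := f\,\mathbf{1}_{\{|f| \le N\}}$, so that $f_N \in L^{\infty}(\mathbb{R}) \cap L^2(\mathbb{R})$ and $\|f - f_N\|_{L^2(\mathbb{R})} \to 0$ as $N \to \infty$ by dominated convergence. Then
\begin{align*}
\|h_\varepsilon f\|_{L^2(\mathbb{R})} &\le \|h_\varepsilon f_N\|_{L^2(\mathbb{R})} + \|h_\varepsilon (f - f_N)\|_{L^2(\mathbb{R})} \\
&\le N\,\|h_\varepsilon\|_{L^2(\mathbb{R})} + 2\|g_M\|_{L^{\infty}}\,\|f - f_N\|_{L^2(\mathbb{R})}.
\end{align*}
Given $\delta > 0$, I would first fix $N$ so large that the second term is smaller than $\delta/2$, and then, with $N$ frozen, choose $\varepsilon$ small enough that $N\,\|h_\varepsilon\|_{L^2(\mathbb{R})} < \delta/2$; this yields \eqref{A.1}.

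The only point requiring care — hardly an obstacle — is the order in which the two parameters are sent to their limits: one must freeze the truncation level $N$, using the $\varepsilon$-uniform $L^{\infty}$ bound on $h_\varepsilon$, before invoking Lemma \ref{lemma a.1}. As an alternative one could observe that $h_\varepsilon \to 0$ in $L^2(\mathbb{R})$ and is bounded in $L^{\infty}(\mathbb{R})$, so that $|h_\varepsilon|^2 |f|^2 \to 0$ in measure on $\{|x|<M\}$ while being dominated by the integrable function $4\|g_M\|_{L^{\infty}}^2 |f|^2$, and then conclude via a subsequence argument and dominated convergence; but the truncation argument above is cleaner and avoids passing to subsequences.
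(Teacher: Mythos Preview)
Your proof is correct. Both your argument and the paper's hinge on the same two ingredients --- truncating $f$ by a bounded function, and combining the uniform $L^{\infty}$ control on $(1-\psi(\varepsilon D))g$ over $\{|x|<M\}$ with the $L^2$ continuity of translations applied to $g$ --- but the packaging differs. The paper works directly: it applies Cauchy--Schwarz to the inner convolution integral to obtain $\|(1-\psi(\varepsilon D))g\,f\|_{L^2(|x|<M)}^2 \le C\int_{|y|\le 1}\int_{|x|<M} |f(x)|^2|g(x)-g(x-\varepsilon y)|^2\,dx\,dy$, then splits $f$ into an $L^\infty$ approximant and a small $L^2$ remainder, invoking continuity of translations on $g$ for the first piece and the bound $|g(x)-g(x-\varepsilon y)|\le C_M$ for the second. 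You instead first localize $g$ to $g_M = g\,\mathbf{1}_{\{|x|<M+1\}}\in L^\infty\cap L^2$, which lets you invoke Lemma~\ref{lemma a.1} as a black box to get $\|h_\varepsilon\|_{L^2}\to 0$, and then perform the same $f$-splitting. Your route is slightly more modular (it reuses Lemma~\ref{lemma a.1} rather than reproving its content), at the cost of the extra localization step; the paper's direct computation avoids introducing $g_M$ but repeats the translation-continuity argument inside the proof.
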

\begin{proof}
In fact, we have
\begin{align*}
& \quad \left\|(1-\psi(\varepsilon D))g f\right\|_{L^2(|x|<M)}^2 \\ & = \int_{|x|<M} |f(x)|^2 \left|\int_{|y|\leq 1}  \left(g(x)-g(x-\varepsilon y)\right)\phi(y) dy\right|^2dx \\ & \leq \int_{|y|\leq 1} \phi(y)^2 dy \int_{|y|\leq 1} \int_{|x|<M} |f(x)|^2 |g(x)-g(x-\varepsilon y)|^2 dx dy \\ & \leq C \int_{|y|\leq 1} \int_{|x|<M} |f(x)|^2 |g(x)-g(x-\varepsilon y)|^2 dx dy.
\end{align*}
The first inequality comes from the Cauchy-Schwarz inequality.\\\\
Since $g \in L_{loc}^\infty(\mathbb{R})$, we can deduce that for every $0<\varepsilon<1$ and for every $|x|< M, |y|\leq1$, we have $|g(x)-g(x-\varepsilon y)|^2 \leq C_M$.\\\\
Also, as $f \in L^2(\mathbb{R})$, for every $\delta>0$, there exists $h_{\delta} \in L^\infty(\mathbb{R})$ with $\|h_\delta\|_{L^{\infty}(\mathbb{R})}^2 \leq C_{\delta}$ such that $\|f-h_{\delta}\|_{L^2(\mathbb{R})}^2 \leq \delta$. Then we have
\begin{align*}
& \quad\int_{|y|\leq 1} \int_{|x|<M} |f(x)|^2 |g(x)-g(x-\varepsilon y)|^2 dx dy \\ & \leq 2 \quad\int_{|y|\leq 1} \int_{|x|<M} |h_\delta(x)|^2 |g(x)-g(x-\varepsilon y)|^2 dx dy \\ & + 2\quad\int_{|y|\leq 1} \int_{|x|<M} |f(x)-h_\delta(x)|^2 |g(x)-g(x-\varepsilon y)|^2 dx dy \\ & \leq C_{\delta}\sup_{|y|\leq 1}\int_{|x|<M}|g(x)-g(x-\varepsilon y)|^2 dx +  C_M  \delta.
\end{align*}
By the continuity of translations in $L^2(\mathbb{R})$, we know that
\begin{align*}
\sup_{|y|\leq 1}\int_{|x|<M}|g(x)-g(x-\varepsilon y)|^2 dx \underset{\varepsilon \to 0_{+}}{\longrightarrow} 0.
\end{align*}
Thus we infer \eqref{A.1}. 
\end{proof}
Lemma \ref{lemma a.2} allows us to deduce the following corollary.
\begin{corollary}
\label{corollary a.3}
Given $g \in L_{loc}^{\infty}(\mathbb{R})\cap L^2(\mathbb{R})$. Assume that  $f_{\varepsilon} \in L^2(\mathbb{R})$ satisfying $\|f_{\varepsilon}-f\|_{L^2(\mathbb{R})} \underset{\varepsilon \to 0_{+}}{\longrightarrow} 0$. Then for every $M >0$, we have 
\begin{equation}
\label{A.2}
\left\|(1-\psi(\varepsilon D))g f_{\varepsilon}\right\|_{L^2(|x|<M)} \underset{\varepsilon \to 0_{+}}{\longrightarrow} 0.
\end{equation}
\end{corollary}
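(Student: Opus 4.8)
The plan is to deduce Corollary~\ref{corollary a.3} from Lemma~\ref{lemma a.2} by a one-step splitting of $f_\varepsilon$ into its $L^2$-limit $f$ and a vanishing remainder. Concretely, I would write
\[
(1-\psi(\varepsilon D))g\, f_\varepsilon = (1-\psi(\varepsilon D))g\, f + (1-\psi(\varepsilon D))g\,(f_\varepsilon - f)
\]
and apply the triangle inequality in $L^2(|x|<M)$, leaving two terms to estimate.

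For the first term, Lemma~\ref{lemma a.2} applies directly to the fixed function $f\in L^2(\mathbb{R})$ and gives $\|(1-\psi(\varepsilon D))g\, f\|_{L^2(|x|<M)} \to 0$ as $\varepsilon\to 0_+$. For the second term, I would use the crude pointwise bound
\[
\|(1-\psi(\varepsilon D))g\,(f_\varepsilon - f)\|_{L^2(|x|<M)} \le \|(1-\psi(\varepsilon D))g\|_{L^\infty(|x|<M)}\,\|f_\varepsilon - f\|_{L^2(\mathbb{R})}.
\]
The point is that the prefactor is bounded uniformly in $\varepsilon\in(0,1)$: since $\psi(\varepsilon D)g(x)=\int_{|y|\le 1} g(x-\varepsilon y)\phi(y)\,dy$ and $\operatorname{supp}\phi\subset[-1,1]$, for $|x|<M$ and $\varepsilon<1$ the integrand only evaluates $g$ on $\{|x|<M+1\}$, so both $|g(x)|$ and $|\psi(\varepsilon D)g(x)|$ are controlled by $\|g\|_{L^\infty(|x|<M+1)}$ (using $\phi\ge 0$, $\int\phi=1$), which is finite because $g\in L^\infty_{loc}(\mathbb{R})$. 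Together with the hypothesis $\|f_\varepsilon-f\|_{L^2(\mathbb{R})}\to 0$, this sends the second term to $0$.

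Summing the two bounds yields \eqref{A.2}. I do not expect a genuine obstacle here; the only step that needs a line of care is the uniform-in-$\varepsilon$ local $L^\infty$ bound on $(1-\psi(\varepsilon D))g$, which relies exclusively on the compact support of $\phi$ and the local boundedness of $g$.
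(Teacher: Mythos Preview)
Your proof is correct and follows essentially the same approach as the paper: both split $f_\varepsilon = f + (f_\varepsilon - f)$, apply Lemma~\ref{lemma a.2} to the first piece, and control the second piece by a uniform-in-$\varepsilon$ local bound on $(1-\psi(\varepsilon D))g$ coming from $g\in L^\infty_{loc}$ and $\operatorname{supp}\phi\subset[-1,1]$. The only cosmetic difference is that the paper reuses the Cauchy--Schwarz estimate from the proof of Lemma~\ref{lemma a.2} to bound the second term, whereas you use a direct H\"older bound $\|(1-\psi(\varepsilon D))g\|_{L^\infty(|x|<M)}\|f_\varepsilon-f\|_{L^2}$; both amount to the same observation.
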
 
\begin{proof}
By Lemma \ref{lemma a.2} we know that
\begin{align*}
\left\|(1-\psi(\varepsilon D))g f\right\|_{L^2(|x|<M)} \underset{\varepsilon \to 0_{+}}{\longrightarrow} 0.
\end{align*}
From the proof of Lemma \ref{lemma a.2}, we have
\begin{align*}
& \quad \|(1-\psi(\varepsilon D)) g (f_\varepsilon - f)\|_{L^2(|x|<M)}^2 \\ & \leq C \int_{|y|\leq 1} \int_{|x|<M} |f_\varepsilon(x) - f(x)|^2 |g(x)-g(x-\varepsilon y)|^2 dx dy \\ & \leq  C_M \int_{|x|<M} |f_\varepsilon(x) - f(x)|^2 dx \underset{\varepsilon \to 0_{+}}{\longrightarrow} 0.
\end{align*}
Thus we conclude \eqref{A.2}.
\end{proof}
\end{appendix}

\end{document}